\newtheorem{theorem}{Theorem}[section]
\newtheorem{corollary}[theorem]{Corollary}
\newtheorem{lemma}[theorem]{Lemma}
\newtheorem{proposition}[theorem]{Proposition}
\newtheorem*{new exact}{Theorem~\ref{thm:new exactsequence}}
\theoremstyle{definition}
\newtheorem{definition}[theorem]{Definition}
\newtheorem{example}[theorem]{Example}
\newtheorem{remark}[theorem]{Remark}
\newcommand{\R}{\mathbb{R}}
\newcommand{\Z}{\mathbb{Z}}
\newcommand{\C}{\mathbb{C}}
\newcommand{\K}{\mathcal{K}}
\newcommand{\M}{\mathcal{M}}
\newcommand{\I}{\mathcal{I}}
\newcommand{\rk}{\operatorname{rank}}
\newcommand{\G}{\Gamma}
\title[Higher-order signature cocycles]{Higher-Order Signature Cocycles for Subgroups of Mapping Class Groups and Homology Cylinders}
\author{Tim D. Cochran}
\address{Department of Mathematics MS-136, P.O. Box 1892, Rice University, Houston, TX 77251-1892}
\email{cochran@rice.edu}
\author{Shelly Harvey}
\address{Department of Mathematics MS-136, P.O. Box 1892, Rice University, Houston, TX 77251-1892}
\email{shelly@rice.edu}
\author{Peter D. Horn}
\address{Department of Mathematics, Columbia University MC 4406, 2990 Broadway, New York, NY 10027}
\email{pdhorn@math.columbia.edu}
\begin{document}






\begin{abstract}
We define families of invariants for elements of the mapping class group of $\Sigma$, a compact orientable surface.  Fix any characteristic subgroup $H\lhd \pi_1(\Sigma)$ and restrict to $J(H)$, any subgroup of mapping classes that induce the identity on $\pi_1(\Sigma)/H$. To any unitary representation $\psi$ of $\pi_1(\Sigma)/H$ we associate a higher-order $\rho_\psi$-invariant and a signature 2-cocycle $\sigma_\psi$. These signature cocycles are shown to be generalizations of the Meyer cocycle. In particular each $\rho_\psi$ is a quasimorphism and each $\sigma_\psi$ is a bounded $2$-cocycle on $J(H)$. In one of the simplest non-trivial cases, by varying $\psi$, we exhibit infinite families of linearly independent quasimorphisms and signature cocycles. We show that the $\rho_\psi$ restrict to homomorphisms on certain interesting subgroups.  Many of these invariants extend naturally to the full mapping class group and some extend to the monoid of homology cylinders based on $\Sigma$.
\end{abstract}

\maketitle

\section{Introduction}

Suppose $\Sigma$ is a compact oriented surface and $\mathcal{M}=\mathcal{M}(\Sigma)$ is its \emph{mapping class group},  i.e. the group of isotopy classes of orientation preserving diffeomorphisms of $\Sigma$ that restrict to the identity on $\partial \Sigma$. This includes the (framed) pure braid groups as one example. The mapping class group is important for several reasons. First, the classifying space $B\mathcal{M}$ is essentially homotopy equivalent to the moduli space of Riemann surfaces of topological type $\Sigma$. Furthermore, homeomorphisms of surfaces are very important in low-dimensional topology, since manifolds are often understood by decomposing them into simpler pieces.  For example, any $3$-manifold can be expressed as the union of two handlebodies identified along their common boundary surface via a homeomorphism. Similarly, recent attempts at a systematic structure for the study of $4$-manifolds view such manifolds as singular surface bundles over surfaces, called Lefschetz fibrations (and broken Lefschetz fibrations). Monodromies associated  to these fibrations are homeomorphisms of surfaces. These decompositions reduce the study of these complicated manifolds to the study of surface homeomorphisms. Our broad goal is to to describe and investigate many families of invariants for important subgroups of the mapping class groups using $3$- and $4$-dimensional manifolds. Many of our results also apply to subgroups of the monoid of homology cylinders, a recent generalization of $\mathcal{M}$.

Our invariants are generalizations of the classical Meyer signature cocycle \cite{May}, which we now briefly review. The Meyer signature cocycle has been defined only in the cases that the number of components of $\partial\Sigma$ is $0$ or $1$.  Recall that there is an exact sequence
\begin{eqnarray}\label{eq:standardexact}
1\to \mathcal{I}\overset{i}\longrightarrow \mathcal{M}\overset{r_M}\longrightarrow \textrm{Sp}(2g,\Z)\cong \mathbb{I}\mathrm{som}\left(H_1(\Sigma;\Z)\right)\to 1
\end{eqnarray}
where $r_M(f)$ is the induced action of $f$ on a fixed symplectic basis of $H_1(\Sigma)$,  $\mathbb{I}\mathrm{som}\left(H_1(\Sigma)\right)$ is the group of isometries of the intersection form on $H_1(\Sigma)$, and $\mathcal{I}$ is the \emph{Torelli group}. The latter is
the subgroup of $\M$ consisting of homeomorphisms that induce the identity on $H_1(\Sigma)$. Meyer defined a canonical $2$-cocycle
$$
\tau_M: \textrm{Sp}(2g,\Z)\times \textrm{Sp}(2g,\Z)\to \Z
$$
that induces a $2$-cocycle on $\mathcal{M}$ which we call the \emph{Meyer signature cocycle}
\begin{eqnarray}\label{eq:meyer}
\sigma_{M}:\mathcal{M}\times \mathcal{M}\xrightarrow{(r_M,r_M)} \textrm{Sp}(2g,\Z)\times \textrm{Sp}(2g,\Z)\overset{\tau_M}\longrightarrow\Z.
\end{eqnarray}
The Meyer cocycle satisfies the following properties that we call the \emph{Meyer properties}:
\begin{enumerate}
\item [1.] $\sigma_M$ is a \emph{bounded} $2$-cocycle (i.e. its range is bounded);
\item [2.] $\sigma_M(f,g)$ is the signature of the total space of the $\Sigma$-bundle over the twice punctured disk whose monodromy around the punctures is $f$ and $g$ respectively;
\item [3.] $\sigma_M$ vanishes as a $2$-cocycle on $\mathcal{I}$;
\end{enumerate}
Moreover, if genus$(\Sigma)\leq 2$ there is a (unique) corresponding $1$-chain, called the \emph{Meyer function},
$$
\rho_M:\mathcal{M}\to \mathbb{Q},
$$
such that $\delta\rho_M=\sigma_M$ in group cohomology with $\mathbb{Q}$-coefficients ~\cite{May}\cite[Equation 5.3]{A87}, and satisfying the following additional properties:
\begin{enumerate}
\item [4.] $\rho_M$ is a class function (i.e. it is constant on conjugacy classes);
\item [5.] $\rho_M$ is a quasimorphism (defined below);
\item [6.] the restriction of $\rho_M$ to $\mathcal{I}$ is a homomorphism.
\end{enumerate}
(In general if $\sigma_M$ is restricted to the \emph{hyperelliptic mapping class group} then such a Meyer's function exists with the above properties since $[\sigma_M]$ is trivial in the second rational cohomology of the hyperelliptic mapping class group ~\cite{Endo2,Mori2, Mori3}.)

The mathematics associated to Meyer's signature cocycle is extraordinarily rich.  Meyer himself gave formulae for the signature of surface bundles over surfaces and subsequent authors have extended these formulae to Lefschetz fibrations of $4$-manifolds and other complex varieties ~\cite{Endo1,Kuno2}.  Morita showed that $\sigma_M$ is part of a cocycle that is essentially equivalent to a Casson's celebrated invariant for homology $3$-spheres ~\cite{Mor2}.   As another example, Gambaudo-Ghys \cite{GaGh} consider the case of the braid group  and use their results to study the global geometry of the Gordian metric space of knots and to produce quasimorphisms on the group of compactly supported area-preserving diffeomorphisms of an open two-dimensional disc \cite{GaGh2}, and more generally to study the dynamics of surfaces \cite{Ghys}.

Quasimorphisms have been shown, in recent years, to be quite useful. Recall that a \textbf{quasimorphism} on a group $J$ is a function $\rho:J\to \mathbb{R}$ whose deviation from being a homomorphism is universally bounded by a constant $D_\rho$, that is, for all $f,g\in J$ 
$$
|~\rho(fg)-\rho(f)-\rho(g)~|\leq D_\rho.
$$
Two such are considered equivalent if they differ by a bounded function. Quasimorphisms are related to bounded cohomology (defined in Section~\ref{sec:propsigcocycles}), bounded generation ~\cite{BeFu,BeFu2} and stable commutator length \cite{Ba91,Kot04}. For example, if $\widehat{Q}(J)$ denotes the vector space of quasimorphisms of $J$ then there is an exact sequence
$$
0\to H^1(J;\mathbb{R})\to \widehat{Q}(J)\overset{\delta}\longrightarrow H^2_b(J;\mathbb{R})\to H^2(J;\mathbb{R}).
$$
An excellent place to learn about these subjects is \cite{Cal2}.

We assume throughout that $\Sigma$ is a surface with at least one boundary component, on one of which we choose a basepoint, $*$. We often denote $\pi_1(\Sigma,*)$, by $F$, a free group, whose rank will be suppressed (but is of course determined by the genus and the number of boundary components). Suppose $H$ is a characteristic subgroup of $F$. Then we let $J=J(H)$ denote the subgroup of $\mathcal{M}$ consisting entirely of homeomorphisms that induce the identity on $\pi_1(\Sigma,*)/H$. (Warning: this definition is only accurate if $\partial \Sigma$ has $1$ boundary component. See Section~\ref{sec:Defrho} for the correct definition of $J(H)$ in the cases that $\Sigma$ has more than one boundary component). For example $J(F)=\mathcal{M}$, and $J([F,F])=\mathcal{I}$. Another important example is $H=F_k$, the $k^{th}$ term of the lower central series of $\pi_{1}(\Sigma)$, $k\geq2$. In this case $J(H)$ is $\mathcal{J}(k)$, the $k^{th}$ \emph{generalized Johnson subgroup}, which is the subgroup of homeomorphisms that induce the identity on $F/F_k$. Specifically, in our notation $\mathcal{J}(2)$ is the Torelli group and $\mathcal{J}(3)$ is called the \emph{Johnson subgroup} (normally denoted $\mathcal{K}$). The $k^{th}$ term of the lower central series of $\I$ is another important subgroup. Yet another important class of examples are the mod $L$ versions of these subgroups. In particular, if $L\in \Z_+$ and $H=\bigcup_{x\in F}[F,F]x^L$, then $J(H)$ is the \emph{level L subgroup} of $\M$, sometimes denoted $\mathrm{Mod}(L)$, which is the subgroup of homomorphisms that induce the identity on $H_1(\Sigma;\Z/L\Z)$ \cite{Put1,Put2}. Other examples involve mixtures of the lower central and \emph{derived} subgroups of $F$.

Now fix a unitary representations $\psi: F/H \rightarrow U(\mathcal{H})$ on a separable Hilbert space $\mathcal{H}$ (one possibility is just a $U(n)$-representation). In Section~\ref{sec:Defrho} we give natural examples of such representations for some of the most important examples. To $H$ and $\psi$ we associate a \textbf{higher-order $\rho$-invariant}
$$
\rho_\psi: J(H)\to \mathbb{R}.
$$
In Section~\ref{sec:Defsigs} we define the \textbf{higher-order signature $2$-cocycle}
$$
\sigma_\psi:J(H)\times J(H)\to G,
$$
where $G=\Z$ if dim($\mathcal{H})<\infty$ and $G=\R$ if dim($\mathcal{H})=\infty$. In brief, the higher-order $\rho$-invariants are defined as follows: Given $f\in J(H)$, form the mapping torus $M_f$, which has a torus as its boundary.  From this perform ``longitudinal Dehn-filling'' to arrive at the closed $3$-manifold $N_f$. The latter is obtained by attaching a solid torus to $M_f$ in such a way that $*\times S^1$ bounds an embedded disk in $N_f$. We show that, under the hypothesis on $f$, there is a canonical surjection
$$
\phi_f:\pi_1(N_f)\to F/H.
$$
Given the pair $(N_f,\phi_f)$ and a fixed auxiliary \emph{finite-dimensional} unitary representation $\psi$, we let $\rho_\psi(f)=\rho(N_f,\psi\circ\phi_f)$ where the latter is the real-valued $\rho$-invariant of Atiyah-Patodi-Singer ~\cite{APS2}. In the infinite-dimensional case, we restrict to representations of the form
$$
\psi: F/H \rightarrow \G\overset{\ell_r}{\rightarrow}U(\ell^{(2)}(\G)),
$$
for a countable discrete $\G$ where $\ell_r$ is the left-regular representation of $\G$ on the Hilbert space $\ell^{(2)}(\G)$. In this case we set $\rho_\psi(f)=\rho(N_f,\psi\circ\phi_f)$, the \textbf{Cheeger-Gromov von Neumann $\rho$-invariant}  associated to $(N_f,\psi\circ\phi_f)$ ~\cite{ChGr1} (this is also called the $\ell^{(2)}-\rho$-invariant associated to $\psi\circ\phi_f$). These have the advantage that they are canonically associated to $H$ and hence enjoy better properties.

We establish that each of the $\rho_\psi$ and  $\sigma_\psi$ possess all of the Meyer properties

\begin{theorem}\label{thm:Meyerprops} For any $H$ and $\psi$ as above,
\begin{enumerate}
\item [0.] With real coefficients, $\delta\rho_\psi=\sigma_\psi$ (Proposition~\ref{prop:cobrhoissigma});
\item [1.] $\rho_\psi$ is a class function on $J(H)$ (Lemma~\ref{lem:classfunction});
\item [2.] $\rho_\psi$ is a quasimorphism on $J(H)$ (Proposition~\ref{prop:quasihomo});
\item [3.] $\sigma_\psi$ is a bounded $2$-cocycle on $J(H)$ (Theorem~\ref{thm:sigbounded}, Corollary~\ref{cor:boundedcocycle});
\item [4.] If $\Sigma$ has one boundary component then $\sigma_\psi(f,g)$ is the difference between a twisted signature and the ordinary signature of the total space of the $\Sigma$-bundle over the twice punctured disk whose monodromy around the punctures is $f$ and $g$ respectively (Corollary~\ref{cor:V=W});
\item [5.] If $\psi$ is finite-dimensional then $[\sigma_\psi]\in \ker(H^2(J(H);\Z)\to H^2(J(H);\mathbb{R}))$ (Corollary~\ref{cor:sigcohomologyclass});
\item [6.] $\sigma_\psi$ vanishes identically as a $2$-cocycle on $C(H)\cap \I$ (Corollary~\ref{cor:sigmavanishes}); where $C(H)\lhd J(H)$ is the subgroup consisting of those classes that induce the identity map
$$
\text{id}=f_*:\frac{H}{[H,H]}\to \frac{H}{[H,H]}.
$$
(see Definition~\ref{def:C(H)} for the definition of $C(H)$ when $\partial\Sigma$ is disconnected).
\item [7.] the restriction of $\rho_\psi$ to $C(H)\cap \I$ is a homomorphism (Corollary~\ref{cor:rhoishomo}),
\end{enumerate}

\end{theorem}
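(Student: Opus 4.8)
The statement collects seven properties, and the plan is to recognize that they are not logically independent: properties (0) and (3) are the load-bearing results, while (2), (5) and (7) follow formally once (0), (3) and (6) are in hand. I would therefore organize the argument around establishing (0), (1), (3) and (6) directly, and then deduce the remaining items.

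The foundational step is (0), the identity $\delta\rho_\psi=\sigma_\psi$ over $\R$. Written in the inhomogeneous bar complex this asks that $\rho_\psi(f)+\rho_\psi(g)-\rho_\psi(fg)$ equal $\sigma_\psi(f,g)$. Geometrically I would build the $\Sigma$-bundle $W$ over the twice-punctured disk $P$ whose monodromies around the two inner circles are $f$ and $g$ and whose monodromy around the outer circle is $fg$; its boundary is the oriented disjoint union of the three mapping tori $M_f$, $M_g$, $M_{fg}$, which after the longitudinal Dehn fillings produce the closed manifolds $N_f$, $N_g$, $N_{fg}$ used to define $\rho_\psi$. The Atiyah--Patodi--Singer index theorem (finite-dimensional $\psi$) and the Cheeger--Gromov $L^2$-index theorem (infinite-dimensional $\psi$) then express the sum of the boundary $\rho$-invariants as the signature defect of $W$, namely the difference between the $\psi$-twisted signature $\sigma^{(2)}(W)$ and the ordinary signature $\sigma(W)$ (normalized by $\dim\mathcal H$ in the finite-dimensional case). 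Matching this against the defined value of $\sigma_\psi(f,g)$ yields (0) and simultaneously gives the geometric interpretation (4). The care here lies in the Dehn-filling correction terms, the corner/non-additivity contributions, and in checking that the boundary representation $\psi\circ\phi_f$ extends compatibly over $W$.

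For (3), boundedness, I would bound each signature in $\sigma_\psi(f,g)$ by the second Betti number of $W$: one has $|\sigma(W)|\le b_2(W)$ and, for a $\psi$ of dimension $n$, $|\sigma^{(2)}(W)|\le n\,b_2(W)$, while in the von Neumann case $|\sigma^{(2)}(W)|\le b_2^{(2)}(W)\le b_2(W)$ by the comparison of $L^2$-Betti numbers with ordinary ones. Since $W$ is a $\Sigma$-bundle over the fixed base $P$, $b_2(W)$ depends only on $\Sigma$, giving a bound uniform in $f,g$. Property (2) is then immediate: $|\rho_\psi(fg)-\rho_\psi(f)-\rho_\psi(g)|=|\sigma_\psi(f,g)|$ is universally bounded by (0) and (3), so $\rho_\psi$ is a quasimorphism. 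Property (1), that $\rho_\psi$ is a class function, I would obtain from diffeomorphism invariance of the $\rho$-invariant: a conjugate $hfh^{-1}$ has mapping torus diffeomorphic to $M_f$ by a map compatible with the fillings and with $\phi$, so $\rho_\psi(hfh^{-1})=\rho_\psi(f)$. Property (5) follows from (0) as well: over $\R$, $\sigma_\psi$ is the coboundary $\delta\rho_\psi$, hence $[\sigma_\psi]$ dies in $H^2(J(H);\R)$, placing the integral class in the kernel of the change-of-coefficients map.

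The genuinely separate work is (6), the vanishing of $\sigma_\psi$ on $C(H)\cap\I$, from which (7) is formal (by (0) the restriction of $\rho_\psi$ has vanishing coboundary there, hence is a homomorphism). For (6) I would argue that when $f$ induces the identity on both $H_1(\Sigma)$ and $H/[H,H]$ the twisting by $\psi\circ\phi_f$ does not alter the relevant middle-dimensional homology of $W$, so the twisted and untwisted intersection forms agree up to taking $\dim\mathcal H$ copies, forcing the signature defect $\sigma_\psi$ to vanish. I expect the main obstacle to be precisely these two geometric/analytic inputs — the signature-defect computation underlying (0) and (4), with its Dehn-filling and non-additivity corrections, and the homological-triviality argument underlying (6) — since everything else reduces to bookkeeping in (bounded) group cohomology once those are established.
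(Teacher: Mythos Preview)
Your overall architecture is right and matches the paper: items (0), (1), (3), (6) are the substantive inputs and the rest follow formally. The reductions (2) from (0)+(3), (5) from (0), and (7) from (0)+(6) are exactly as in the paper. But two of your load-bearing arguments have real gaps.

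For (3), the inequality $b_2^{(2)}(W)\le b_2(W)$ is not a general fact about $L^2$-Betti numbers, so you cannot invoke it as a ``comparison''. The paper does not bound $b_2$ of $W$ at all; instead it observes that the signature depends only on $H_2(W)/\mathrm{im}\,H_2(\partial W)$, and uses the Mayer--Vietoris sequence for $W\simeq N_f\cup_\Sigma N_g$ to bound the rank of this quotient by $\dim H_1(\Sigma;\psi)$, which in turn is at most $n\beta_1(\Sigma)$ (or $\beta_1(\Sigma)$ in the von Neumann case) from a cell count on $\Sigma$. This works uniformly in $f,g$ without any comparison of $L^2$ and ordinary Betti numbers. (A related minor point: the $\Sigma$-bundle over the pair of pants is the paper's $V(f,g)$; the manifold $W(f,g)$ used to define $\sigma_\psi$ is $V$ with thickened solid tori glued on so that $\partial W=N_f\sqcup N_g\sqcup -N_{fg}$ consists of \emph{closed} $3$-manifolds. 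This is precisely what eliminates the ``corner/non-additivity'' issues you anticipate, and it is why (4) requires the separate argument that $V$ and $W$ have the same signatures when $\partial\Sigma$ is connected.)

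For (6), your proposed mechanism --- that the twisted and untwisted intersection forms on $W$ ``agree up to $\dim\mathcal H$ copies'' --- is not what happens and I do not see how to make it work. The paper instead shows that the two signatures vanish \emph{separately}: if $f\in C(H)$ then the twisted signature of $W(f,g)$ is zero, and if $f\in\mathcal I$ then the ordinary signature is zero (the latter being the special case $H=F$). The key step for the twisted vanishing is that $f\in C(H)$ forces $f_*=\mathrm{id}$ on $H_1(\Sigma;\psi)$; then the Wang exact sequence for the fibration $M_f\to S^1$ gives injectivity of $H_1(\Sigma;\psi)\to H_1(M_f;\psi)$, and a further argument shows the composite into $H_1(N_f;\psi)$ is injective. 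Feeding this into the same Mayer--Vietoris sequence as in (3) forces $\mathrm{im}\,\partial_*=0$, so $H_2(W;\psi)$ is supported on $\partial W$ and the twisted signature vanishes. This Wang-sequence/injectivity argument is the genuine content of (6), and your sketch does not capture it.
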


Moreover, in analogy to the exact sequence \eqref{eq:standardexact}:
\newtheorem*{newexact}{Theorem~\ref{thm:new exactsequence}}
\begin{newexact} If $\Sigma$ has one boundary component then there is an exact sequence
\begin{eqnarray}\label{eq:nonst1}
1\to C(H)\overset{i}\longrightarrow J(H)\overset{r_\psi}\longrightarrow \mathbb{I}\mathrm{som}\left(H_1(\Sigma;\mathbb{Z}[F/H])\right)
\end{eqnarray}
and a $2$-cocycle $\tau_\psi$ on the the image of $r_\psi$ such that
$$
\sigma_\psi=r_\psi^*(\tau_\psi)- n\sigma_M;
$$
where $n=$~dim($\mathcal{H})$ ($n=1$ if dim($\mathcal{H})=\infty$)
and $\mathbb{I}\mathrm{som}\left(H_1(\Sigma;\mathbb{Z}[F/H])\right)$ is the group of  automorphisms of $H_1(\Sigma;\mathbb{Z}[F/H])$ (as a $\mathbb{Z}[F/H]$-module) that preserve the intersection form with $\mathbb{Z}[F/H]$-coefficients ~\cite{Mi4}. 
\end{newexact}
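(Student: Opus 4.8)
The plan is to follow Meyer's derivation of \eqref{eq:meyer}, replacing the symplectic action on $H_1(\Sigma;\Z)$ by the $\Z[F/H]$-linear action on twisted homology and isolating the ordinary-signature contribution as the source of the $-n\sigma_M$ term. There are three things to produce: the homomorphism $r_\psi$ with kernel $C(H)$, the cochain $\tau_\psi$ together with the displayed identity, and the verification that $\tau_\psi$ is a cocycle.

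First I would build the exact sequence. Because $\Sigma$ has a single boundary component, $\pi_1(\Sigma)=F$ is free and the cover $\widetilde{\Sigma}\to\Sigma$ associated to $H$ has $\pi_1(\widetilde\Sigma)=H$; covering-space theory then identifies $H_1(\Sigma;\Z[F/H])$ with $H_1(\widetilde\Sigma;\Z)\cong H/[H,H]$ as a $\Z[F/H]$-module, the module structure being the deck action. Since $f\in J(H)$ induces the identity on $F/H$, the induced map $f_*$ on $H_1(\Sigma;\Z[F/H])$ commutes with the deck transformations, hence is $\Z[F/H]$-linear, and, being induced by a homeomorphism, preserves the $\Z[F/H]$-valued intersection form. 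This defines $r_\psi$ (which in fact depends only on $H$), and by the very definition of $C(H)$ its kernel is $C(H)$; thus $i$ is injective and the sequence is exact at $J(H)$. The sequence need not end in $1$, since, as with the Johnson homomorphisms, not every $\Z[F/H]$-isometry is realized by a mapping class.

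Next I would define $\tau_\psi$ and prove the identity. By Corollary~\ref{cor:V=W}, $\sigma_\psi(f,g)$ is the twisted signature $\operatorname{sign}_\psi E(f,g)$ minus $n$ times the ordinary signature of the $\Sigma$-bundle $E(f,g)$ over the twice-punctured disk with monodromies $f$ and $g$; by Meyer property~2 the ordinary signature equals $\sigma_M(f,g)$, which supplies the $-n\sigma_M$ summand. The crux is that the twisted signature depends only on $r_\psi(f)$ and $r_\psi(g)$: the restriction of the coefficient system $\psi\circ\phi$ to a fiber computes the twisted fiber homology $H_1(\Sigma;\Z[F/H])\otimes_{\Z[F/H]}\C^n$, and a Wang/Mayer--Vietoris analysis of the three boundary monodromies of the pair of pants assembles the twisted middle-dimensional intersection form of $E(f,g)$ entirely from the monodromy action on this twisted fiber homology, i.e.\ from $r_\psi(f)$ and $r_\psi(g)$. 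Hence $\tau_\psi(r_\psi(f),r_\psi(g)):=\operatorname{sign}_\psi E(f,g)$ is well defined on the image of $r_\psi$ and yields $\sigma_\psi=r_\psi^*(\tau_\psi)-n\sigma_M$.

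Finally, that $\tau_\psi$ is a $2$-cocycle follows formally. Since $r_\psi$ surjects onto its image, the pullback $r_\psi^*$ is injective on cochains and commutes with $\delta$; and $r_\psi^*(\tau_\psi)=\sigma_\psi+n\sigma_M$ is a cocycle, because Property~0 exhibits $\sigma_\psi$ as a coboundary and $\sigma_M$ is Meyer's cocycle. Thus $r_\psi^*(\delta\tau_\psi)=\delta r_\psi^*(\tau_\psi)=0$, forcing $\delta\tau_\psi=0$. The hard part will be the well-definedness step --- the twisted analogue of Meyer's foundational signature computation --- which requires care with the degeneracy of the boundary intersection form and with Wall non-additivity over the nontrivial $\Z[F/H]$-coefficient system; the finite-dimensional and the $\ell^{(2)}$ (von Neumann) cases will likely need parallel but separate signature-theoretic justifications.
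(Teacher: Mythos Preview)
Your outline is correct and would work, but the paper's proof of well-definedness is considerably more economical than the Meyer-style computation you propose. You correctly identify well-definedness of $\tau_\psi$ as the crux, and you propose to attack it head-on by showing, via a twisted Wang/Mayer--Vietoris analysis, that the twisted intersection form on $H_2(V(f,g);\psi)$ is determined entirely by the actions $r_\psi(f)$ and $r_\psi(g)$ on twisted fiber homology. This is the natural generalization of Meyer's original argument and can be made to work, but it is exactly the ``hard part'' you anticipate, with the attendant care about degeneracy and the separate finite-dimensional and $\ell^{(2)}$ analyses.

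The paper sidesteps this entirely. It sets $\sigma^t_\psi(f,g) := \sigma(V(f,g);\widetilde\psi) = \sigma_\psi(f,g) + n\sigma_M(f,g)$ (using Proposition~\ref{prop:V=W} and Corollary~\ref{cor:V=W} to pass between $V$ and $W$), observes that $\sigma^t_\psi$ is already a cocycle on $J(H)$ as a sum of two cocycles, and then uses only the cocycle identity together with Theorem~\ref{thm:additivity} to show $\sigma^t_\psi$ is constant on $C(H)$-cosets. Concretely, for $h\in C(H)$ the cocycle condition reads
\[
\sigma^t_\psi(g,h) - \sigma^t_\psi(fg,h) + \sigma^t_\psi(f,gh) - \sigma^t_\psi(f,g) = 0,
\]
and Theorem~\ref{thm:additivity} kills the first two terms, giving $\sigma^t_\psi(f,gh)=\sigma^t_\psi(f,g)$; the other variable is handled symmetrically. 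Thus $\sigma^t_\psi$ descends to $J(H)/C(H)\cong\operatorname{image}(r_\psi)$, and this descent \emph{is} $\tau_\psi$. Your approach would in effect redevelop the content of Theorem~\ref{thm:additivity} inside a direct signature computation; the paper instead invokes that theorem as a black box and obtains well-definedness from a three-line algebraic identity, treating the finite-dimensional and $\ell^{(2)}$ cases uniformly.
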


The higher-order $\rho$-invariants and signature $2$-cocycles give a vast supply of invariants for subgroups of the mapping class group. In fact they yield maps
$$
\rho:\mathrm{Rep}(F/H,U(n))\to \widehat{Q}(J(H)),
$$
and
$$
\sigma:\mathrm{Rep}(F/H,U(n))\to H^2_b(J(H);\mathbb{R}).
$$

In certain cases, there is an interesting interpretation of $\rho_\psi$ as a twisted signature defect of a Lefschetz fibration \cite{Fuller}(or more generally of singular $\Sigma$-bundles over the $2$-disk):

\newtheorem*{introLefschetz}{Proposition~\ref{prop:Lefschetz}} \begin{introLefschetz} Suppose that $D_1,\dots,D_n$ are positive Dehn twists along null-homologous circles in $\Sigma$. Then, for any unitary representation $\psi$ of $F/[F,F]\equiv H_1(\Sigma;\Z)$,
$$
\rho_\psi(D_n\circ\dots\circ D_1)= \sigma(Y,\psi)-\sigma(Y)
$$
where $Y$ is the Lefschetz fibration over the $2$-disk with generic fiber $\Sigma$ and with $n$ singular fibers whose monodromies are $D_1,\dots,D_n$.
\end{introLefschetz}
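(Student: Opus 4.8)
The plan is to exhibit $Y$ itself as a bounding $4$-manifold for the closed $3$-manifold $N_f$ associated to $f=D_n\circ\cdots\circ D_1$, over which the representation $\psi\circ\phi_f$ extends, and then to invoke the Atiyah--Patodi--Singer signature theorem \cite{APS2} in the form: if $(N,\alpha)=\partial(W,\widetilde\alpha)$ then $\rho_\alpha(N)=\sigma_{\widetilde\alpha}(W)-\dim(\alpha)\,\sigma(W)$, the difference between the twisted and ordinary signatures of $W$. Since $f\in\I=J([F,F])$ (each $D_i$ is a twist along a separating curve and hence acts trivially on homology), the invariant $\rho_\psi(f)$ is defined, and the identity to be proved is precisely this bounding formula for the particular choice $W=Y$.

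First I would identify $N_f$ with $\partial Y$. The total space of the Lefschetz fibration $p\colon Y\to D^2$ is a smooth $4$-manifold whose boundary decomposes as $\partial Y=\partial_h Y\cup_{T^2}\partial_v Y$, where the horizontal part $\partial_h Y=p^{-1}(\partial D^2)$ is the $\Sigma$-bundle over $\partial D^2=S^1$ with total monodromy $D_n\circ\cdots\circ D_1=f$, i.e. the mapping torus $M_f$, and the vertical part $\partial_v Y$ is the $\partial\Sigma$-bundle over $D^2$. Because every $D_i$ is the identity near $\partial\Sigma$, this vertical part is the trivial bundle $\partial\Sigma\times D^2$, a solid torus glued to $\partial M_f=\partial\Sigma\times S^1$ along $\partial\Sigma\times\partial D^2$. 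In this solid torus the curve $\{*\}\times\partial D^2=\{*\}\times S^1$ bounds the disk $\{*\}\times D^2$; comparing with the definition of $N_f$, this is exactly the longitudinal Dehn filling of $M_f$. Hence $\partial Y=N_f$, with orientations to be matched.

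Next I would extend $\psi\circ\phi_f$ over $Y$. Building $Y$ from $\Sigma\times D^2$ by attaching one $2$-handle along each vanishing cycle gives $\pi_1(Y)\cong F/\langle\!\langle c_1,\dots,c_n\rangle\!\rangle$, where $c_i$ is the $i$-th vanishing cycle. By hypothesis each $c_i$ is null-homologous in $\Sigma$, so $c_i\in[F,F]=H$; therefore the normal closure $\langle\!\langle c_1,\dots,c_n\rangle\!\rangle$ lies in $H$, and the abelianization $F\to F/H=H_1(\Sigma;\Z)$ factors through a surjection $\overline\phi\colon\pi_1(Y)\to F/H$. Naturality of the fiber inclusion then shows $\overline\phi$ restricts on $\pi_1(N_f)=\pi_1(\partial Y)$ to $\phi_f$, so $\psi\circ\overline\phi$ is the required extension of $\psi\circ\phi_f$.

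With the bounding manifold and the extended representation in hand, applying the APS formula to $(N_f,\psi\circ\phi_f)=\partial(Y,\psi\circ\overline\phi)$ yields $\rho_\psi(f)=\sigma(Y,\psi)-\sigma(Y)$ (taking $\dim\psi=1$, as is natural for the abelian group $F/[F,F]$, or after absorbing the dimension factor into the ordinary term per our normalization). The routine parts are the handle-decomposition computation of $\pi_1(Y)$ and the boundary identification; the step requiring the most care is the bookkeeping of orientations and of the $\dim\psi$ normalization, so that the sign and multiplicity in the APS formula match the stated difference $\sigma(Y,\psi)-\sigma(Y)$ exactly, together with the verification that $\overline\phi$ genuinely restricts to $\phi_f$ rather than merely to some surjection onto $F/H$.
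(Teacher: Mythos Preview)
Your proposal is correct and follows essentially the same strategy as the paper's proof. The paper constructs a null-bordism $W$ of $N_f$ by attaching $2$-handles to $\Sigma\times D^2$ along the twist curves (referencing the construction in the preceding lemma), applies the Atiyah--Patodi--Singer theorem, and then identifies $W$ with the Lefschetz fibration $Y$ via the standard handle description (citing Gompf--Stipsicz); you instead start with $Y$ and verify directly that $\partial Y=N_f$ and that $\psi\circ\phi_f$ extends, which amounts to the same argument with the identification step absorbed into the setup.
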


Calculation of these invariants is, in general, difficult, as can be seen in \cite{GaGh,KM2}. However we include, in Section~\ref{sec:examples}, calculations in one of the simplest non-classical cases. Set $H=[F,F]$, choose a symplectic basis for $H_1(\Sigma;\Z)$ and define
$$
\psi_\omega:F/H\cong H_1(\Sigma;\mathbb{Z})\cong \Z^{2g}\overset{\pi}{\longrightarrow}S^1\equiv U(1),
$$
where, for each $i=1,...2g$, $\pi(x_i)=\omega$. Then, for each such $\omega$, we have the higher-order $\rho$-invariant $\rho_\omega=\rho_{\psi_\omega}$ defined on any subgroup of the Torelli group, $\mathcal{I}=J([F,F])$. Specifically, let $\mathcal{J}(3)=\K_g\subset\mathcal{I}$ be the Johnson subgroup.

\newtheorem*{introinfgenQH}{Theorem~\ref{thm:infgenquasi}}
\begin{introinfgenQH} For $g\geq 2$, $\{\rho_\omega\}$ spans an infinitely generated subspace of $\widehat{Q}(\K_g)$.
\end{introinfgenQH}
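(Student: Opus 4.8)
The plan is to produce, for every $N\geq 1$, frequencies $\omega_1,\dots,\omega_N\in S^1$ together with test elements $\phi_1,\dots,\phi_N\in\K_g$ for which the evaluation matrix $\big[\,\overline{\rho}_{\omega_j}(\phi_i)\,\big]_{i,j}$ is nonsingular. Each $\rho_\omega$ is a quasimorphism on $\mathcal{I}=J([F,F])$ by Theorem~\ref{thm:Meyerprops}(2), hence restricts to one on $\K_g\subseteq\mathcal{I}$, and we pass to its homogenization $\overline{\rho}_\omega(\phi)=\lim_{n\to\infty}\rho_\omega(\phi^{\,n})/n$, which exists and represents $\rho_\omega$ in $\widehat{Q}(\K_g)$. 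Because two homogeneous quasimorphisms coincide in $\widehat{Q}(\K_g)$ exactly when they coincide as functions, a nonsingular evaluation matrix forces $\overline{\rho}_{\omega_1},\dots,\overline{\rho}_{\omega_N}$ to be linearly independent; as $N$ is arbitrary, the span of $\{\rho_\omega\}$ is then infinitely generated. Restricting to $\K_g$ rather than a smaller subgroup is what makes these genuine quasimorphisms: by Theorem~\ref{thm:Meyerprops}(7) the homomorphism property is asserted only on $C([F,F])\cap\mathcal{I}$.

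The computational engine is Proposition~\ref{prop:Lefschetz}. By Johnson's theorem $\K_g$ contains (indeed is generated by) the Dehn twists along separating, hence null-homologous, simple closed curves, so any product $\phi=D_n\circ\dots\circ D_1$ of positive such twists lies in $\K_g$ and is the monodromy factorization of a Lefschetz fibration $Y$ over the disk, whence Proposition~\ref{prop:Lefschetz} gives $\rho_\omega(\phi)=\sigma(Y,\psi_\omega)-\sigma(Y)$. I would then observe that $\psi_\omega$ factors as $H_1(\Sigma;\Z)\cong\Z^{2g}\xrightarrow{\mathrm{sum}}\Z\xrightarrow{1\mapsto\omega}U(1)$, so that $\psi_\omega\circ\phi_f$ is pulled back from the infinite cyclic cover determined by $\pi_1(N_f)\to\Z$. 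Consequently $\rho_\omega(\phi)$ and its homogenization are governed by Tristram--Levine type $\omega$-signature functions of the knots and links canonically produced by the null-homologous vanishing cycles. Since $\phi^{\,n}$ is again a product of positive separating twists, $\overline{\rho}_\omega(\phi)$ is the linear growth rate $\lim_n[\sigma(Y_n,\psi_\omega)-\sigma(Y_n)]/n$, an explicit real-valued step function of $\omega$ for each test element; note that the Meyer correction term $-n\sigma_M$ of Theorem~\ref{thm:new exactsequence} (with $n=1$) is independent of $\omega$ and, restricted to $\K_g\subseteq\mathcal{I}$, contributes only a fixed homomorphism, so it cannot obstruct the independence argument.

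With $\rho_\omega$ reduced to signature functions, the final step is a realization-and-triangulation argument. I would select an infinite family of test elements $\phi_1,\phi_2,\dots\in\K_g$, arising from distinct configurations of null-homologous vanishing cycles in the fixed surface $\Sigma$ (the quasimorphism defect makes products of separating twists genuinely informative, so infinitely many independent signature functions are available even though $g$ is fixed), whose $\omega$-signature functions $f_i(\omega)=\overline{\rho}_\omega(\phi_i)$ have jump discontinuities located at a prescribed strictly increasing sequence $\omega_1<\omega_2<\dots$ in $S^1$. Arranging the jumps so that $[f_i(\omega_j)]$ is triangular with nonzero diagonal entries makes every finite truncation nonsingular, and linear independence of $\{\overline{\rho}_{\omega_j}\}$ follows at once, proving the theorem.

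The main obstacle is the middle step: identifying $\sigma(Y,\psi_\omega)-\sigma(Y)$, and its growth rate, with a concrete Tristram--Levine signature function via a Wall non-additivity computation on the pieces of $Y$, and then verifying that the realized functions $f_i$ are genuinely linearly independent, i.e. that one can prescribe infinitely many distinct jump locations (roots of the relevant Alexander-type polynomials) so that no cancellation collapses the evaluation matrix. Boundedness of $\sigma_\omega$ from Theorem~\ref{thm:Meyerprops}(3) guarantees that the homogenizations $\overline{\rho}_{\omega}$ are well defined, but it is the explicit signature calculation, not the formal theory, that must supply the separation of distinct frequencies $\omega$.
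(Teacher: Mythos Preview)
Your proposal is an outline of a reasonable strategy, but it is not a proof: you explicitly identify ``the main obstacle'' (the Wall non-additivity computation linking $\sigma(Y,\psi_\omega)-\sigma(Y)$ to concrete signature functions, and the verification that infinitely many jump locations can be realized by separating curves on a \emph{fixed} genus-$g$ surface) and then do not carry it out. Everything hinges on that step, and the formal machinery you invoke (homogenization, Proposition~\ref{prop:Lefschetz}, the factorization through $\Z$) does not by itself produce either the needed signature functions or the triangular evaluation matrix. In particular, your claim that ``infinitely many independent signature functions are available even though $g$ is fixed'' is exactly what has to be proved, and you give no mechanism for it.

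The paper's proof is substantially more concrete and takes a different route. Rather than passing through Tristram--Levine signatures of auxiliary links, it fixes two explicit separating curves $\alpha$ and $\beta(m,n)$ on $\Sigma_{g,1}$ (with $\beta$ depending on integer parameters $m,n$), and for the test elements $f_{(m,n,N)}=(D_\alpha D_{\beta(m,n)})^{N+1}$ builds a $4$-manifold $W$ by attaching $2$-handles and computes its twisted intersection form as an explicit $2N\times 2N$ matrix $C_{(m,n,N)}(\omega)$ (Lemma~\ref{thematrix}). A direct signature calculation (Lemma~\ref{calc_sig}) then shows that for the countable family $\omega_k=e^{2\pi i/4^k}$ one has $\rho_{\omega_k}(f_{(4^j-1,4^j+1,2N_0)})=-2(2N_0+1)$ when $j=k-1$ and $=-2$ when $j\geq k$. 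Given a putative bounded linear combination $\sum a_i\rho_{k_i}$ with $k_1<\cdots<k_l$, evaluating at $j=k_1-1$ and letting $N_0\to\infty$ forces $a_1=0$. This is closer in spirit to your triangularization idea than to a jump-locus argument, but the separation comes from arithmetic of $4^k$-th roots of unity rather than from locating zeros of Alexander polynomials, and the unboundedness is achieved by varying the \emph{exponent} $N$ in a fixed cyclic subgroup rather than by homogenizing.
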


Previous constructions of quasimorphisms have used pure group theory, Seiberg-Witten theory, and quantum cohomology. Our construction is of a quite different flavor.

In addition,

\newtheorem*{infgenbounded}{Theorem~\ref{thm:infgenbounded}}
\begin{infgenbounded} For $g\geq 2$, $\{\sigma_\omega=\delta(\rho_\omega)\}$ spans an infinitely generated subspace of $H^2_b(\K_g;\mathbb{R})$, the second bounded cohomology of $\K_g$.
\end{infgenbounded}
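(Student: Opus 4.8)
The plan is to deduce the theorem from Theorem~\ref{thm:infgenquasi} by tracking the class $\sigma_\omega$ through the exact sequence
\[
0\to H^1(\K_g;\mathbb{R})\to \widehat{Q}(\K_g)\overset{\delta}\longrightarrow H^2_b(\K_g;\mathbb{R})\to H^2(\K_g;\mathbb{R})
\]
recalled in the introduction. By definition $\sigma_\omega=\delta(\rho_\omega)$, so the subspace of $H^2_b(\K_g;\mathbb{R})$ spanned by $\{\sigma_\omega\}$ is exactly $\delta(V)$, where $V\subseteq\widehat{Q}(\K_g)$ is the span of $\{\rho_\omega\}$. Theorem~\ref{thm:infgenquasi} gives $\dim V=\infty$, and exactness identifies $\ker(\delta)$ with $H^1(\K_g;\mathbb{R})=\operatorname{Hom}(\K_g,\mathbb{R})$. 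Hence
\[
\dim\bigl(\operatorname{span}\{\sigma_\omega\}\bigr)=\dim V-\dim\bigl(V\cap\operatorname{Hom}(\K_g,\mathbb{R})\bigr),
\]
so it suffices to show that no nontrivial finite linear combination $\sum_i c_i\rho_{\omega_i}$ is a homomorphism; this forces $V\cap\operatorname{Hom}(\K_g,\mathbb{R})=0$ and therefore $\dim\delta(V)=\infty$.

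I would stress that this last point does \emph{not} follow formally from Theorem~\ref{thm:infgenquasi}. Since $\widehat{Q}(\K_g)$ is the space of quasimorphisms modulo bounded functions (equivalently, homogeneous quasimorphisms), that theorem only asserts that the $\rho_\omega$ are independent as functions: a nontrivial combination is thereby prevented from being \emph{bounded}, but not from being an (unbounded) \emph{homomorphism}. The gap is genuine, and cannot be closed by a dimension count alone, because $\K_g$ is not known to be finitely generated, so one cannot assume $\dim H^1(\K_g;\mathbb{R})<\infty$ uniformly in $g\ge 2$ (indeed in low genus the relevant Torelli-type group is infinitely generated by Mess). One must instead detect the $\rho_\omega$ on a part of $\K_g$ that is invisible to homomorphisms. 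Since every real homomorphism vanishes on the commutator subgroup $[\K_g,\K_g]$, it is enough to show that no nontrivial combination $\sum_i c_i\rho_{\omega_i}$ vanishes identically on $[\K_g,\K_g]$.

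Accordingly, the main step is to rerun the computation underlying Theorem~\ref{thm:infgenquasi} on commutators. Starting from the separating-twist and bounding-pair maps $f_j\in\K_g$ used there, I would choose $w_j\in[\K_g,\K_g]$ (commutators, or short products of commutators of such maps) and evaluate $\rho_\omega(w_j)=\rho(N_{w_j},\psi_\omega\circ\phi_{w_j})$ directly from the definition, using the signature interpretation of $\rho_\psi$ (cf.\ Proposition~\ref{prop:Lefschetz}) together with the behavior of the Atiyah--Patodi--Singer/Cheeger--Gromov $\rho$-invariant under the surgery producing $N_{w_j}$. Because $\psi_\omega$ is a $U(1)$-representation, the functions $\omega\mapsto\rho_\omega(w_j)$ should again appear as Tristram--Levine-type signature functions of $\omega\in S^1$ with distinct jump data; establishing their linear independence in $\omega$ then guarantees that for every nontrivial $(c_i)$ some $w_j$ satisfies $\sum_i c_i\rho_{\omega_i}(w_j)\ne 0$. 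As noted above, this gives the independence of $\{\rho_\omega\}$ modulo $\operatorname{Hom}(\K_g,\mathbb{R})$, and $\delta$ carries it to the asserted infinite-dimensional span of $\{\sigma_\omega\}$ in $H^2_b(\K_g;\mathbb{R})$.

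The hardest part will be the commutator computation itself. A single twist has a comparatively simple mapping torus, whereas the $w_j$ are genuinely nonabelian words, so the closed manifolds $N_{w_j}$ and the induced maps $\phi_{w_j}$ are more intricate, and one must control $\rho(N_{w_j},\psi_\omega\circ\phi_{w_j})$ precisely enough to isolate its $\omega$-dependence and certify linear independence. I expect this to require either an explicit surgery or bordism description of $N_{w_j}$, or an additivity-type argument expressing $\rho_\omega(w_j)$ in terms of the simpler pieces already computed for Theorem~\ref{thm:infgenquasi}, corrected by a bounded (hence, after homogenization, controllable) defect term.
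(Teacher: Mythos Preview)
Your high-level reduction is right: via the exact sequence one must show that no nontrivial finite linear combination $\sum c_i\rho_{\omega_i}$ equals a homomorphism plus a bounded function. But from there your proposal diverges from the paper's proof in a way worth noting, and also contains a small logical slip.

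First the slip. You write that it suffices to show no nontrivial combination ``vanishes identically on $[\K_g,\K_g]$.'' That is not the correct criterion: the $\rho_\omega$ are not homogeneous, so if $\sum c_i\rho_{\omega_i}=\phi+\delta$ with $\phi$ a homomorphism and $\delta$ bounded, the restriction to commutators is merely \emph{bounded}, not zero. Thus finding a commutator $w$ with $\sum c_i\rho_{\omega_i}(w)\neq 0$ proves nothing; you would need the restriction to $[\K_g,\K_g]$ to be \emph{unbounded} (equivalently, the homogenization to be nonzero on some commutator, i.e.\ $\sum c_i\rho_{\omega_i}(w^n)$ unbounded in $n$). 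You allude to homogenization at the end, but your plan as written---computing $\rho_\omega(w_j)$ at finitely many commutators and showing linear independence in $\omega$---only detects non-vanishing, not unboundedness.

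Second, the paper avoids the commutator computation entirely, and its trick is worth knowing. Rather than probing $[\K_g,\K_g]$, the paper proves a short auxiliary lemma: for a \emph{single} separating Dehn twist $D$ (either $D_\alpha$ or $D_\beta$), the set $\{\rho_k(D^M):M\in\Z\}$ is bounded. This follows by repeating the $4$-manifold construction of Lemma~\ref{lem:maincalc} with only the $\alpha$-handles (or only the $\beta$-handles) present; the twisted intersection form then reduces to the matrix $A$, whose signature is $N$, yielding $\rho_\omega(D_\alpha^{N+1})=-1$ for all $\omega\neq 1$. Granting this, if $\sum a_i\rho_{k_i}=\phi+\delta$, then on powers $D^M$ the left side is bounded and $\delta$ is bounded, so $M\phi(D)$ is bounded, forcing $\phi(D_\alpha)=\phi(D_\beta)=0$. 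Hence $\phi$ vanishes on the subgroup generated by $D_\alpha$ and $D_\beta$, and on that subgroup $\sum a_i\rho_{k_i}=\delta$ is bounded---contradicting the unboundedness on the cyclic subgroup $\langle D_\alpha\circ D_\beta\rangle$ already established in the proof of Theorem~\ref{thm:infgenquasi}.

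So your route via commutators is in principle viable (once the criterion is corrected to unboundedness), but the paper's argument is both simpler and complete: it reuses the existing computation, adds one elementary lemma about single twists, and localizes the putative homomorphism to zero on the relevant subgroup rather than attacking $[\K_g,\K_g]$ head-on. Your proposal, by contrast, leaves its hardest step---an explicit $\rho$-computation on powers of commutators of Dehn twists---entirely open.
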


It was recently shown in ~\cite{BeFu} that almost every subgroup of the mapping class group has infinite dimensional $H^2_b(-;\mathbb{R})$.  By contrast all the bounded cohomology groups of any amenable group vanish.

The subgroups on which the higher-order $\rho$-invariants \emph{are} homomorphisms promise to be very interesting. In particular, if $H=F_k$, then the groups $\{C([F_k,F_k])\}$, homeomorphisms that induce the identity on $F_k/[F_k,F_k]$ (and $F/F_k$), constitute a new and interesting filtration of the Torelli group. It was not known until recently whether or not $C([F_2,F_2])$ was non-empty, but it is now known that its intersection with each Johnson subgroup is non-zero, so $C([F_2,F_2])$  is highly non-trivial ~\cite{ChFa}.

We indicate a possible method of calculation that relies on previous work in link theory. There are various ways to map a punctured disk into $\Sigma$ and corresponding to these are ways to map the pure braid group into the Torelli group of $\Sigma$ ~\cite{Le8}. Let $\Theta$ be such a map. Then with some restrictions (see Proposition~\ref{prop:subsurfacerestrict}) the higher-order $\rho$-invariants of $\Theta(\beta)$ can be calculated in terms of the higher-order $\rho$-invariants of the zero framed surgery on the link obtained as the closure of the braid $\beta$. Such $\rho$-invariants of links have been studied extensively by the authors and others, although only a few calculations have been made for closures of pure braids ~\cite{C,Ha2,Hor2}. The recent thesis of M. Bohn may provide some tools for calculations in the general case \cite{Bohn}.

In Section~\ref{sec:invariantsforcylinders}, we generalize our work to the monoid of homology cylinders based on $\Sigma$, denoted $\mathcal{C}$ and to the group, $\mathcal{H}$, of \emph{homology cobordism classes of homology cylinders}  (defined in Section~\ref{sec:invariantsforcylinders}). These enlargements of $\mathcal{M}$ have been widely considered recently \cite{GarL1,Mor5,Sak1}.  

\section{Definition of the Higher-order $\rho$-invariants}\label{sec:Defrho}

In this section we will define the \textbf{higher-order $\rho$-invariant}
$$
\rho_\psi: J(H)\to \mathbb{R},
$$
associated to $H$ and $\psi$. Of course this serves to define such a function on any subgroup of $J(H)$. Basic properties of these invariants will be addressed in later sections.

\subsection{The subgroups $J(H)\subset\mathcal{M}$}\label{subsec:J(H)}

Suppose that $\Sigma$ is a connected oriented, compact surface with $m+1$ boundary components where $m\geq 0$. Choose a basepoint, $*$, on one of the boundary components, and basepoints $z_1,\dots,z_m$, on the other boundary components. Also choose directed arcs, $\delta_i$, in $\Sigma$ from $*$ to $z_i$. Recall that we are given $H$, a characteristic subgroup of $\pi_{1}(\Sigma,*)$, and $\psi:\pi_{1}(\Sigma) /H \rightarrow U(\mathcal{H})$, a unitary representation on a separable Hilbert space $\mathcal{H}$.

\begin{definition}\label{def:J(H)} Let $J=J(H)$ be the normal subgroup of $\mathcal{M}$ of mapping classes $[f]$ that satisfy
\begin{itemize}
\item [1.] $f$ induces the identity map on $\pi_{1}(\Sigma)/H$;
\item [2.] The homotopy classes $[f(\delta_i)\overline{\delta}_i]$ lie in $H$ for $1\leq i\leq m$. (Here $\overline{\delta}_i$ is the arc $\overline{\delta}_i$ backwards.)
\end{itemize}
\end{definition}
\noindent If $m=0$ then the second condition is vacuous. It is easy to check that the definition of $J(H)$ is independent of the choices of $*$, $z_i$ and $\delta_i$. For example, if $H=[F,F]$ then $J(H)$ is the Torelli group. The presence of condition $[2.]$ may be unfamiliar to the reader since  much of the literature deals with the case of a surface with a single boundary component ($m=0$). However, this is the ``right'' definition, even for the Torelli group (i.e. agrees with the definition of the Torelli group in ~\cite[p.114]{DJ1}).

\subsection{The associated $3$-manifolds}\label{subsection:3manifolds}

To define the $\rho$-invariants we first associate (in a standard fashion) to any $f\in J(H)$ a closed oriented $3$-manifold, $N_f$, and a canonical epimorphism $\phi_f:\pi_1(N_f)\to \pi_1(\Sigma)/H$.

We begin by recalling some notation. For any $f\in \mathcal{M}$, we can form the mapping torus of $f$, $M_{f}=\Sigma \times [0,1]/(x,0)\sim (f(x),1)$, a compact \emph{oriented} 3-manifold (possibly with boundary). The formation of $M_f$ is shown schematically by the first two pictures on the left side of Figure~\ref{fig:Mf}. In the schematic representation the vertical interval represents $\Sigma$ and the horizontal ``interval'' represents $[0,1]$.
\begin{figure}[htbp]
\begin{picture}(105,110)(0,0)
\put(-60,12){\includegraphics{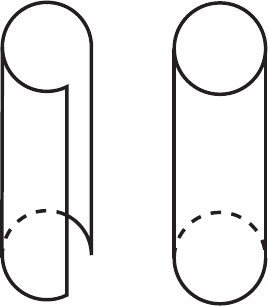}}
\put(118,12){\includegraphics{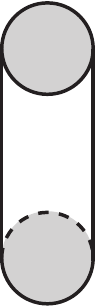}}
\put(-60,0){$\Sigma\times[0,1]$}
\put(-1,0){$M_f$}
\put(128,0){$N_f$}
\end{picture}
\caption{$M_f$ and $N_f$}
\label{fig:Mf}
\end{figure}
The oriented homeomorphism type of $M_f$ depends only on the \emph{conjugacy} class of $f$. More precisely, if $g$ and $f$ are conjugate then $M_g$ and $M_f$ are orientation-preserving homeomorphic relative to $(\Sigma \times \{0\})\cup \partial M_f$. It follows that for \emph{any} $f,g$, $M_{fg}\cong M_{gf}$. Each of the
boundary components of $M_f$ has a canonical identification with $S^{1}\times S^{1}$, where $S^1\times \{1\}$ is one of the components of $\partial \Sigma$, $t=\{*\}\times S^1$ is the circle $\{*\}\times [0,1]/\sim$ and $t_i=\{z_i\}\times S^1$ is the circle $\{z_i\}\times [0,1]/\sim$. Note that $M_{f^{-1}}=-M_f$ via an orientation-preserving homeomorphism fixing $(\Sigma \times \{0\})$ and inducing $(x,t)\to (x,-t)$ on each of the boundary tori. Figure~\ref{fig:Mf} is a representation of the case that $\Sigma$ has one circle boundary component (which appears as an $S^0$ in our schematic). Thus the top and bottom circles in the middle part of Figure~\ref{fig:Mf} represent the single boundary torus. If we attach solid tori to each of the boundary components of $M_f$ in such a way that $2$-disks are attached to the circles $\{*\}\times S^1$ and each $\{z_i\}\times S^1$, we denote this \emph{closed} manifold by $N_f$. It is shown schematically on the right-hand side of Figure~\ref{fig:Mf}, where the solid torus is shaded. This is the same as forming the quotient space $M_f\twoheadrightarrow  N_f$ wherein, for each $x\in \partial\Sigma$, $\{x\}\times S^1$ is identified to a single point. Given $f$, the 3-manifolds $M_f$ and $N_f$ are unique up to orientation-preserving homeomorphisms (relative $\partial M_{f}$ in the first case) that induce the identity on $\pi_{1}$.

Moreover, we have:
$$
\pi_{1}(M_f,*)\cong \langle\pi_{1}(\Sigma), t ~| ~t x t^{-1} = f_*(x),~ x\in \pi_{1}(\Sigma)\rangle,
$$
with respect to the canonical map $j_{\ast}: \pi_{1}(\Sigma\times \{0\}) \rightarrow \pi_{1}(M_f)$.  The subgroup $H$ is normal in $\pi_{1}(M_f)$ and
\begin{equation}\label{eq:coeffsyst}
\pi_{1}(M_f)/H\cong\langle\pi_{1}(\Sigma), t ~| ~t x t^{-1} = x, ~H\rangle\cong \Z\times \pi_{1}(\Sigma)/H
\end{equation}
since $f$ induces the identity modulo $H$. Since $N_f$ is obtained from $M_f$  by adding two cells along $\{t,t_1,\dots,t_m\}$, and then adding $3$-cells,
\begin{equation}\label{eq:pi1N}
\pi_{1}(N_f)\cong \langle\pi_{1}(\Sigma), t ~| ~t=1,~\delta_it_i\overline{\delta}_i=1, ~1\leq i\leq m, ~~x = f_*(x),~ x\in \pi_{1}(\Sigma) ~\rangle.
\end{equation}
The image of the rectangle $\delta_i\times [0,1]\hookrightarrow\Sigma \times [0,1]/\sim~= M_f$  shows that $t$ is based homotopic to $\delta_i t_if(\overline{\delta}_i)$. Then, using part $2$  of Definition~\ref{def:J(H)}, we have $f(\delta_i)\overline{\delta}_i=h_i$ so
$$
\delta_it_i\overline{\delta}_i=\delta_it_if(\overline{\delta}_i)f(\delta_i)\overline{\delta}_i\sim ~th_i.
$$
Thus, modulo $H$, the relations  $x = f_*(x)$ are trivial, and the relations $\delta_it_i\overline{\delta}_i$ are a consequence of the relation $t=1$. Hence,
\begin{equation}\label{eq:delta}
\pi_{1}(N_f)/H\cong\langle\pi_{1}(\Sigma)~|~ H\rangle\cong \pi_1(\Sigma)/H.
\end{equation}
Thus we see that there is a unique homomorphism
$$
\phi_f:\pi_1(N_f)\to \pi_{1}(\Sigma)/H
$$
such that the composition
$$
\pi_{1}(\Sigma)\overset{j_*}\to\pi_1(M_f)\to\pi_1(N_f)\overset{\phi_f}\to \pi_{1}(\Sigma)/H,
$$
is the canonical quotient map.

\subsection{The invariants}\label{subsec:invariants}

Now, given any fixed unitary representation $\psi:\pi_{1}(\Sigma)/H\to U(n)$, we get a canonical representation
$$
\psi_f:\pi_1(N_f)\to \pi_{1}(\Sigma)/H\overset{\psi}{\to}U(n).
$$
\noindent To any such pair $(N_f,\psi_f)$ Atiyah-Patodi-Singer associated a real-valued invariant $\rho(N_f,\psi_f)$, defined as a difference between the $\eta$ invariant of $N_f$ and a twisted $\eta$-invariant ~\cite{APS2}. These $\eta$ invariants are Riemannian spectral invariants associated to the signature operator, but the difference, $\rho(N_f,\psi_f)$, was shown to be an invariant of the oriented homeomorphism type of $(N_f,\psi_f)$. We call this the \textbf{higher-order $\rho$-invariant of f}  \textbf{corresponding to $\psi$}, denoted $\rho_\psi(f)$. 

Similarly, given any auxiliary $\phi: \pi_{1}(\Sigma)/H
\to \Gamma$ (for any countable discrete group $\Gamma
$) one can compose with the left-regular representation of $\Gamma$ on the Hilbert space $\ell^{(2)}(\Gamma)$, giving the representation
$$
\psi_f:\pi_1(N_f)\to \pi_{1}(\Sigma)/H\overset{\phi}{\to}\Gamma\to U(\ell^{(2)}(\Gamma)).
$$
To any such a pair $(N_f,\psi_f)$, Cheeger-Gromov associated a real number, $\rho(N_f,\psi_f)$, called the \textbf{von Neumann $\rho$-invariant} ~\cite{ChGr1}. Once again this was defined as the difference between the $\eta$ invariant of $N_f$ and the von Neumann $\eta$-invariant associated to the $\G$-cover of $N_f$. A summary of the basic properties of the $\rho$-invariants is given in Section~\ref{sec:sigprops}. As previously mentioned, the  von Neumann $\rho$-invariants have recently been extremely influential in the study of knots and links ~\cite{COT}.

In summary,

\begin{definition}\label{def:rho} The \textbf{higher-order $\rho$-invariant} of $f\in J(H)$  corresponding to $\psi$, denoted $\rho_\psi(f)$,  is $\rho(N_f,\psi_f)$  as above. Sometimes this will be abbreviated as $\rho(f)$  if $\psi$ is clear from the context.
\end{definition}

\begin{lemma}\label{lem:classfunction} For any $H$ and $\psi$, $\rho_\psi:J(H)\to \mathbb{R}$ is a class function on $J(H)$. Moreover, if $f\in J(H)$ and $g\in \M$, then $\rho_\psi(g^{-1}fg)=\rho_\psi(f)$.
\end{lemma}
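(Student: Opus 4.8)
The plan is to exploit that $\rho_\psi(f)=\rho(N_f,\psi_f)$ depends only on the orientation-preserving homeomorphism type of the decorated closed $3$-manifold $(N_f,\psi_f)$, where $\psi_f=\psi\circ\phi_f$; this is precisely the homeomorphism invariance of the Atiyah--Patodi--Singer and Cheeger--Gromov $\rho$-invariants recalled in Section~\ref{subsec:invariants}. Thus it suffices, for conjugate mapping classes, to produce an orientation-preserving homeomorphism of the associated $N$'s carrying one coefficient system to the other. Since the oriented homeomorphism type of $M_f$ already depends only on the conjugacy class of $f$ (Section~\ref{subsection:3manifolds}), the construction is natural; the only real work is to track the canonical surjection $\phi$ through the homeomorphism and to extend it across the longitudinal Dehn fillings.

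Concretely, fix $f\in J(H)$ and $g\in\M$ and set $f'=g^{-1}fg$, which lies in $J(H)$ because $J(H)\lhd\M$ (so $\rho_\psi(f')$ is defined). First I would check that $(x,t)\mapsto(g(x),t)$ descends to an orientation-preserving homeomorphism $\Phi\colon M_{f'}\to M_f$: the gluing $(x,0)\sim(f'(x),1)$ is sent to $(g(x),0)\sim(fg(x),1)$, which is exactly the gluing in $M_f$. Because every mapping class restricts to the identity on $\partial\Sigma$, $g$ fixes $*$ and each $z_i$, so $\Phi$ is the identity on the boundary tori and in particular carries the filling curves $t,t_1,\dots,t_m$ of $M_{f'}$ to those of $M_f$. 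Hence $\Phi$ extends to an orientation-preserving homeomorphism $\bar\Phi\colon N_{f'}\to N_f$.

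Next I would compute the effect of $\bar\Phi$ on the canonical maps. Write $J_f\colon\pi_1(\Sigma)\to\pi_1(N_f)$ for the composite of $j_*$ with $\pi_1(M_f)\to\pi_1(N_f)$; by \eqref{eq:pi1N}--\eqref{eq:delta} this is surjective and satisfies $\phi_f\circ J_f=q$, the quotient $q\colon\pi_1(\Sigma)\to \pi_1(\Sigma)/H$. Since $\Phi$ is ``apply $g$ to the fiber,'' one has $\bar\Phi_*\circ J_{f'}=J_f\circ g_*$, whence $\phi_f\circ\bar\Phi_*\circ J_{f'}=q\circ g_*=\bar g\circ q=\bar g\circ\phi_{f'}\circ J_{f'}$, where $\bar g\in\operatorname{Aut}(\pi_1(\Sigma)/H)$ is induced by $g$ (well defined since $H$ is characteristic). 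Surjectivity of $J_{f'}$ gives $\phi_f\circ\bar\Phi_*=\bar g\circ\phi_{f'}$. When $g\in J(H)$ we have $\bar g=\mathrm{id}$, so $\psi_f\circ\bar\Phi_*=\psi_{f'}$ and homeomorphism invariance of $\rho$ yields $\rho_\psi(f')=\rho_\psi(f)$; this proves that $\rho_\psi$ is a class function on $J(H)$.

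The remaining point, and the step I expect to be the main obstacle, is the ``moreover'' for arbitrary $g\in\M$, where $\bar g\neq\mathrm{id}$. Here homeomorphism invariance only gives $\rho_\psi(f')=\rho(N_f,\psi\circ\bar g^{-1}\circ\phi_f)$, so one must still show that precomposing the coefficient system with $\bar g$ leaves the invariant unchanged, i.e. $\rho(N_f,\psi\circ\bar g^{-1}\circ\phi_f)=\rho(N_f,\psi\circ\phi_f)$. The clean way to obtain this is to observe that $\rho$ depends only on the unitary equivalence class of the coefficient representation: for the canonical invariants built from the left-regular representation $\ell_r$ of $\G=\pi_1(\Sigma)/H$, the automorphism $\bar g$ merely permutes $\G$, and the unitary $\delta_\gamma\mapsto\delta_{\bar g(\gamma)}$ intertwines $\ell_r\circ\bar g$ with $\ell_r$, so the two $\rho$-invariants agree. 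I would therefore carry out the ``moreover'' in this form, emphasizing that it is exactly the $\operatorname{Aut}(\pi_1(\Sigma)/H)$-naturality enjoyed by the regular representation (the feature that makes these invariants ``canonically associated to $H$''); the verification that $\ell_r\circ\bar g\cong\ell_r$ together with invariance of $\rho$ under unitary equivalence is the crux, and it is this representation-theoretic step, rather than the $3$-manifold topology, that I expect to require the most care.
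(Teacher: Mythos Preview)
Your argument for the first assertion (that $\rho_\psi$ is a class function on $J(H)$) is correct and is essentially the paper's argument made precise. The paper simply invokes the homeomorphism $M_f\cong M_{g^{-1}fg}$ recorded in Subsection~\ref{subsection:3manifolds} and declares the $\rho$-invariants equal; you supply the missing verification that the canonical coefficient systems $\phi_f$ and $\phi_{g^{-1}fg}$ correspond under the induced homeomorphism of the $N$'s, using that $\bar g=\mathrm{id}$ on $F/H$ when $g\in J(H)$. That is exactly the point one has to check, and you check it cleanly.

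For the ``moreover'' you have gone further than the paper and uncovered a genuine subtlety. The paper's one-line proof treats this case identically, but the homeomorphism $\bar\Phi\colon N_{g^{-1}fg}\to N_f$ acts as $g$ on the fibre, so it intertwines $\phi_{g^{-1}fg}$ with $\bar g\circ\phi_f$ rather than with $\phi_f$; hence one only obtains $\rho_\psi(g^{-1}fg)=\rho(N_f,\psi\circ\bar g^{-1}\circ\phi_f)$. Your resolution for the von Neumann case is correct: an automorphism of $\Gamma=F/H$ conjugates the left-regular representation to itself (equivalently, induces a trace-preserving automorphism of $\mathcal{N}\Gamma$), so the Cheeger--Gromov invariant is unchanged. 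This is the right argument, and it is not in the paper's proof.

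You are also right to be cautious about the finite-dimensional case. For a fixed $\psi\colon F/H\to U(n)$ there is no reason for $\psi$ and $\psi\circ\bar g^{-1}$ to be unitarily equivalent when $g\notin J(H)$ (already for $H=[F,F]$ and a character $\psi_\omega$ as in Example~\ref{ex:easiestJ(H)} that weights the symplectic generators unequally, a symplectic $g$ permuting the generators produces a genuinely different character). In that situation the identity $\rho_\psi(g^{-1}fg)=\rho_\psi(f)$ need not hold, and the paper's brief proof does not supply the missing ingredient. So your restriction of the ``moreover'' to the canonical (regular-representation) invariants is prudent; the full statement, read literally for every $\psi$, appears to overreach.
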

\begin{proof} Since $J(H)$ is a normal subgroup of $\M$, $g^{-1}fg\in J(H)$. Then, as observed in Subsection~\ref{subsection:3manifolds}, $M_f\cong M_{g^{-1}fg}$, so  $\rho_\psi(g^{-1}fg)=\rho_\psi(f)$.
\end{proof}

\begin{example}\label{ex:easiestJ(H)} If $H$ is the commutator subgroup then $\pi_{1}(\Sigma)/H \cong H_1(\Sigma)\cong\mathbb{Z}^{\beta_1(\Sigma)}$ and $J(H)$ is the Torelli group. Given complex numbers of norm $1$, $\omega_i$, $1\leq i\leq \beta_1(\Sigma)$,  we can define $\psi_{\omega}:\mathbb{Z}^r\to U(1)\equiv S^1$  by sending $(0,\dots,1,\dots,0)$ to $\omega_i$. Therefore, varying the $\omega_i$ yields a function
$$
\rho:\left(S^1\times\dots\times S^1\right)\times J(H)\rightarrow \R,
$$
where here the $m$-torus should be viewed as the representation space $\mathrm{Rep}\left(\mathbb{Z}^{\beta_1(\Sigma)},U(1)\right)$. In addition the left-regular representation:
$$
\ell_r:\pi_{1}(\Sigma)/H=\mathbb{Z}^{\beta_1(\Sigma)}\hookrightarrow U\left(\ell^{(2)}(\mathbb{Z}^{\beta_1(\Sigma)})\right)
$$
gives a single function
$$
\rho^{(2)}:J(H)\to \R.
$$
It is known that this function is merely the integral over the $n$-torus of the function $\rho$ above ~\cite[Section 5]{COT}.
Furthermore suppose $\Sigma$ is the $2$-disk, $D^2$, with $m$ open subdisks deleted. Then, for any $f\in J(H)$, $M_f$ is homeomorphic to the exterior, $D^2\times S^1\setminus \hat{\beta}_f$ of the closure of an $m$-component pure braid $\beta_f$. The condition $f\in \mathcal{I}$ translates into the condition that the pairwise linking numbers of the components of $\hat{\beta}_f$ are zero (because the homology classes of the $[f(\delta_i)\overline{\delta}_i]$ encode the linking numbers of $\hat{\beta}_f$). Upon adding a solid torus to $M_f$ that caps off the boundary torus $\partial D^2\times S^1$, one arrives at the exterior, $S^3\setminus \hat{\beta}_f$. $N_f$ is obtained from this by adding $m$ additional solid tori (so called Dehn fillings) in such a way that the \emph{longitudes} of the components of $\hat{\beta}_f$ bound disks. The result is usually called the \emph{zero-framed surgery on the link $\hat{\beta}_f$ in $S^3$}, denoted here by $S(\hat{\beta}_f)$. The map $\psi_\omega$ is equivalent to assigning a complex number of norm $1$ to each (meridian) of the link $\hat{\beta_f}$. Therefore $\rho$ above yields
$$
\rho:\left(S^1\times\dots\times S^1\right)\times\mathcal{PB}_m^0\rightarrow \Z,
$$
where $\mathcal{PB}_m^0$ denotes the group of pure braids on $m$ strings with zero linking numbers. This function was (essentially) previously defined by Levine in ~\cite{Le10} for \emph{all links} (not just links that are the closures of pure braids) where it was shown that $\rho$ takes only integral values in this case (see also ~\cite{CiFl,Sm}). For example, if $K$ is a fixed knot then the function
$$
\rho_K:S^1\to \Z
$$
is precisely the \emph{Levine-Tristram signature function} of the knot $K$ and is given by the ordinary signature of the Hermitian matrix
$$
(1-\omega)V-(1-\overline{\omega})V^t
$$
where $V$ is a Seifert matrix for the knot. Therefore for knots and more generally for boundary links this function is straightforward to compute. Even here however the values are interesting as can be evidenced by ~\cite{GaGh} and recent papers addressing the values of this function for torus knots ~\cite{KM2,Boro1,Collins}. For general links, including the closures of pure braids, there is also a formula for this function in terms of bounding surfaces but there are almost no computations in the literature ~\cite{CiFl}. It is significant that the integral of this function is often much simpler than the function itself, as evidenced for torus knots.
\end{example}

\section{Definition of the Higher-order Signature Cocycles}\label{sec:Defsigs}

In this section we define the \textbf{higher-order signature $2$-cocycles}
$$
\sigma_\psi:~J(H)\times J(H)\to G
$$
where $G=\mathbb{Z}$ in the finite-dimensional case and $G=\mathbb{R}$ in the $\ell^{(2)}$ case. First we describe a $4$-manifold $V=V(f,g)$ and a closely related $4$-manifold $W=W(f,g)$, whose boundary is the disjoint union $N_{f} \sqcup N_{g} \sqcup -N_{fg}$. Then we show that the unitary representations extend over $\pi_1(V)$ and $\pi_1(W)$. We define $\sigma(f,g)$ to be a certain twisted signature defect of $W(f,g)$ corresponding to $\psi$. We later show that, in the important case that $\partial\Sigma$ is connected, the signature defects of $W(f,g)$ and $V(f,g)$ agree so that either may be used as the definition of $\sigma_\psi$.

Consider the $4$-manifold $M_f\times [0,1]$ as shown schematically on the left-hand side of Figure~\ref{fig:MfcrossI}. Let $V(f,g)$ denote the union of $M_f\times [0,1]$ and $M_g\times [0,1]$ identified along copies of $(\Sigma \times A )\times \{1\}$ in $M_f\times \{1\}$ and $M_g\times \{1\}$ where $A$ is a small interval about $\frac{1}{2}$ in $[0,1]/\sim$, so that
$$
\Sigma \times A \hookrightarrow \Sigma \times [0,1]\twoheadrightarrow \frac{\Sigma \times [0,1]}{\sim} \equiv M_f,
$$
(and we  have a similar copy $\Sigma\times A \hookrightarrow M_g$). This is shown on the right-hand side of Figure~\ref{fig:MfcrossI}.
\begin{figure}[htbp]
\begin{picture}(180,130)(0,0)
\put(-30,14){\includegraphics{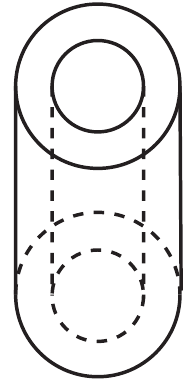}}
\put(140,14){\includegraphics{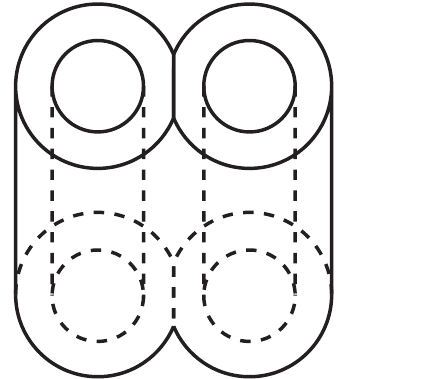}}
\put(-18,0){$M_f\times[0,1]$}
\put(176,0){$V(f,g)$}
\end{picture}
\caption{}
\label{fig:MfcrossI}
\end{figure}
Notice that $\partial V(f,g)$ contains copies of $M_f\cong M_f\times \{0\}$ and $M_g\cong M_g\times \{0\}$ (on the ``inside''), and also a copy of $M_{fg}$ (on the ``outside'').

There is an important alternative description of $V(f,g)$. Let $D$ be the closed oriented 2-disk with $2$ open subdisks deleted. This may be seen as a horizontal slice of $V(f,g)$ on the right-hand side of Figure~\ref{fig:MfcrossI}. Given $f,g\in J(H)$, we have a
unique homomorphism $\Phi: \pi_{1}(D)=\langle t_1,t_2\rangle \rightarrow J$ such that $\Phi(t_1)=f$ and $\Phi(t_2)=g$. This induces a unique (isomorphism class of) $\Sigma$-bundle over $D$. Since the bundle may be assumed to be a product over an arc $A$ that bisects $D$, it decomposes as the union of $M_f\times [0,1]$ and $M_g\times [0,1]$, intersecting along $A\times \Sigma$. Hence one sees that the total space of this bundle is identifiable with $V(f,g)$ defined above. In these terms the boundary of $V(f,g)$ is $M_f\sqcup M_{g} \sqcup - M_{fg} \cup
(\partial\Sigma\times D )$.

Now recall that
$$
N_f= M_f \bigcup_{\partial\Sigma\times S^1} \partial\Sigma\times D^2
$$
where $\partial\Sigma\times D^2$ is a disjoint union of $b$ solid tori where $b$ is the number of boundary components of $\Sigma$. Choose a small collar of $\partial\Sigma$ in $\Sigma$, $[0,\epsilon]\times \partial\Sigma\hookrightarrow \Sigma$. This induces
$$
A_f=[0,\epsilon]\times \partial\Sigma\times S^1\hookrightarrow M_f,
$$
a collar of $\partial M_f$. Now form the $4$-manifold
\begin{equation}\label{eq:plugs}
W(f,g)\equiv V(f,g)\bigcup_{A_f\times \{0\}}[0,\epsilon]\times \partial\Sigma\times D^2\bigcup_{A_g\times \{0\}}[0,\epsilon]\times \partial\Sigma\times D^2,
\end{equation}
as shown schematically in Figure~\ref{fig:W(f,g)}. Then $\partial(W(f,g))= N_{f} \sqcup N_{g} \sqcup -N_{fg}$ where the first two components are on the ``inside'', and the third is on the ``outside'' of the schematic representation. One can see a decomposition of $W(f,g)$ by bisecting the figure using a vertical plane, so that
$$
W(f,g)\cong \left(N(f)\times [0,1]\right)\cup_{\Sigma\times A}\left(N_g\times[0,1]\right).
$$
\begin{figure}[htbp]
\begin{picture}(210,140)(0,0)
\put(60,10){\includegraphics{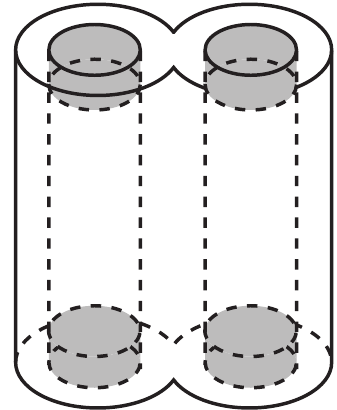}}
\end{picture}
\caption{$W(f,g)$}
\label{fig:W(f,g)}
\end{figure}

Using either point of view, the fundamental group of $V(f,g)$ has a presentation:
\[\langle\pi_{1}(\Sigma), t, s ~| ~t x t^{-1} = f_*(x), ~s x s^{-1} = g_*(x), ~x\in \pi_{1}(\Sigma)\rangle,\]
with respect to the canonical map $j_{\ast}: \pi_{1}(\Sigma) \rightarrow \pi_{1}(V(f,g))$.  The subgroup $H$ is normal in  $\pi_{1}(V(f,g))$ and $\pi_{1}(V(f,g))/H$ has a presentation
\[\langle\pi_{1}(\Sigma), t~, s ~|~ tx t^{-1} = x, ~ sx s^{-1} = x , ~H, ~x\in \pi_{1}(\Sigma)\rangle\]
since $f$ and $g$ induce the identity modulo $H$. But the addition of \eqref{eq:plugs} to $V(f,g)$ has the effect on $\pi_1$ of killing the $t$, $s$ as well as $t_i$, $s_i$ (as in Subsection~\ref{subsection:3manifolds}). If we kill these elements then we see that $j$ induces an isomorphism
\[j_{\ast}:\pi_{1}(\Sigma)/H \rightarrow \pi_{1}(V(f,g))/\langle H,t,t_{i}s,s_i\rangle\cong \pi_1(W(f,g)),\]
where we need the same analysis as was used for equation ~(\ref{eq:delta}).
Therefore, $\psi_f$ and $\psi_g$ extend uniquely to
$$
\tilde{\psi}: \pi_1(V(f,g))\twoheadrightarrow\pi_{1}(W(f,g)) \rightarrow U(\mathcal{H}).
$$

In summary, for any $f,g\in J(H)$, $\partial W(f,g)=N_f\sqcup N_g\sqcup -N_{fg}$ in such a way that for any unitary representation $\psi:\pi_1(\Sigma)/H\to U(\mathcal{H})$, there is a coefficient system, $\tilde{\psi}$, on $\pi_1(W(f,g))$ whose restriction to the boundary components is $\psi_f$, $\psi_g$ and $\psi_{fg}$ respectively. Similar statements hold for $V(f,g)\subset W(f,g)$ whose boundary is $M_f\sqcup M_{g} \sqcup - M_{fg} \cup
(\partial \Sigma\times D)$.

Recall that given $\widetilde{\psi}:\pi_1(W)\to U(n)$, where $W$ is a compact, connected orientable $4$-manifold $W$, one defines the twisted homology of $W$ as follows. Let $\widetilde{W}$ denote the universal cover of $W$ and consider a free left $\mathbb{Z}[\pi_1(W)]$ chain complex, $C_*(\widetilde{W})$, for $\widetilde{W}$. Note
$$
\widetilde{\psi}:\pi_1(W)\to U(n)\subset \mbox{Aut\,}\mathbb{C}^n
$$
endows $\mathbb{C}^n$ with the structure of a right $\mathbb{Z}[\pi_1(W)]$-module. Then set
$$
C_*(W;\widetilde{\psi})\equiv \mathbb{C}^n\otimes_{\widetilde{\psi}}C_*(\widetilde{W}).
$$
and
$$
H_*(W;\widetilde{\psi})\equiv H_*(C_*(W;\widetilde{\psi})).
$$
The usual intersection form on $H_2(W;\mathbb{C})$ generalizes to a hermitian form on $H_2(W;\widetilde{\psi})$ ~\cite[p.91]{L6}. The twisted signature $\sigma(W;\widetilde{\psi})$ is defined to be the ordinary signature of this hermitian form over $\mathbb{C}$. This signature takes values in $\mathbb{Z}$.

Similarly, given $\widetilde{\psi}:\pi_1(W)\to \Gamma\overset{\ell_r}\to U(\ell^{(2)}(\Gamma))$, the $\ell^{(2)}$-homology and the \textbf{von Neumann signature}, $\sigma^{2}_\Gamma(W;\widetilde{\psi})$, are defined (first defined by Atiyah in the case that $W$ is closed ~\cite{A76}, see \cite{LS}\cite[Section 5]{COT}). This signature takes values in $\mathbb{R}$. In Section~\ref{sec:sigprops} we will assemble, for the reader's convenience, the definition and basic properties of the von Neumann signature.

\begin{definition}\label{defcocycle} Given $H$ and a unitary representation $\psi:F/H\to U(\mathcal{H})$ as above we define, in case $\mathcal{H}$ has dimension $n$,
$$
\sigma_{\psi}:J(H)\times J(H)\to \Z,
$$
by
$$
\sigma_{\psi}(f,g)= \sigma(W(f,g);\widetilde{\psi})-n\sigma(W(f,g))
$$
and, in case $\mathcal{H}$ has dimension $\infty$, we define
$$
\sigma_{\psi}:J(H)\times J(H)\to \R,
$$
by
$$
\sigma_{\psi}(f,g)= \sigma^{(2)}_\G(W(f,g);\widetilde{\psi})-\sigma(W(f,g))
$$
where $W(f,g)$ is as defined in equation \eqref{eq:plugs} and $\sigma(W(f,g))$ is the signature of the ordinary intersection form on $H_2(W(f,g);\mathbb{C})$.
\end{definition}

\begin{remark}\label{rem:diffdefsignarturedefect} In the first case, it might be more natural use the definition
$$
\sigma_{\psi}(f,g)= \frac{\sigma(W;\widetilde{\psi})}{n}-\sigma(W)
$$
since then it is parallel to the $\ell^{(2)}$ case, being an ``average twisted signature'' minus an ordinary signature. But this leads to rational values of the signature, rather than integer values, so this explains our preference.
\end{remark}

\begin{proposition}\label{prop:easypropsigma}The following hold for $\sigma_{\psi}$.

\begin{enumerate}
\item $\sigma_\psi(f,g)=\sigma_\psi(g,f)$
\item $\sigma_\psi(f^{-1},g^{-1})=-\sigma_\psi(f,g)$
\item $\sigma_\psi(f,g)=0$ if $f=1$ or $g=1$ or $fg=1$.
\end{enumerate}
\end{proposition}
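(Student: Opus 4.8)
The plan is to read all three identities off geometric symmetries of the defining $4$-manifold $W(f,g)$, using its description as the (capped-off) total space of the $\Sigma$-bundle over the pair of pants $D$ whose monodromies around the two inner boundary circles are $f$ and $g$. Throughout, the governing principle is that both the twisted signature $\sigma(W;\widetilde\psi)$ and the ordinary signature $\sigma(W)$ are invariants of the oriented homeomorphism type of the pair $(W,\widetilde\psi)$, that they change sign under reversal of orientation, and that they are additive under gluing along boundary $3$-manifolds (Novikov additivity, valid for the von Neumann signature as well). Since $\widetilde\psi$ factors through the fiber group $\pi_1(\Sigma)/H$, any self-map of $W$ that is covered by the bundle structure and acts as the identity on fibers automatically respects $\widetilde\psi$; this is what lets the symmetries of $D$ act on the signature data.

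For (1), I would take the orientation-preserving rotation of $D$ by $\pi$ about its center, which interchanges the two inner boundary circles and fixes the outer one. Lifting it through the bundle (as the identity on each $\Sigma$-fiber) produces an orientation-preserving homeomorphism $W(f,g)\cong W(g,f)$ carrying $\widetilde\psi$ to $\widetilde\psi$, and invariance of both signatures gives $\sigma_\psi(f,g)=\sigma_\psi(g,f)$. For (2), I would instead use an orientation-reversing reflection of $D$ that fixes each boundary circle setwise but reverses its orientation; on monodromies this sends $(f,g)$ to $(f^{-1},g^{-1})$. Its lift is orientation-reversing on the total space (orientation-reversing on the base, identity on fibers) and again respects $\widetilde\psi$, so $W(f^{-1},g^{-1})\cong -W(f,g)$. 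Because reversing orientation negates both the ordinary and the twisted intersection form, both signatures negate and $\sigma_\psi(f^{-1},g^{-1})=-\sigma_\psi(f,g)$.

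For (3) I would reduce to a single geometric computation. The case $fg=1$ is then formal: by (1) and (2), $\sigma_\psi(f,f^{-1})=\sigma_\psi(f^{-1},f)=-\sigma_\psi(f,f^{-1})$, and since the values lie in the torsion-free group $\mathbb{Z}$ (or $\mathbb{R}$) this forces $\sigma_\psi(f,f^{-1})=0$; the case $g=1$ follows from the case $f=1$ by (1). It remains to treat $f=1$. Here the monodromy around the first inner circle is trivial, so I would fill that circle with a disk and extend the bundle trivially over it; this exhibits $W(1,g)$ as the complement, in the product cobordism $N_g\times[0,1]$, of a bounding piece $P$ that is the bundle over the filling disk, so $P\simeq\Sigma\times D^2\simeq\Sigma$. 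As $\Sigma$ is homotopy equivalent to a wedge of circles, $H_2(P;\widetilde\psi)=0=H_2(P;\mathbb{C})$, so both signatures of $P$ vanish; the product $N_g\times[0,1]$ also has vanishing signatures, since every class in $H_2$ can be pushed off itself in the $[0,1]$ direction. Novikov additivity then gives $\sigma(W(1,g);\widetilde\psi)=0=\sigma(W(1,g))$, i.e. $\sigma_\psi(1,g)=0$.

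The routine-but-delicate points, and the main obstacle, are all bookkeeping. For (1) and (2) I must confirm that the chosen symmetry of $D$ genuinely lifts to the capped manifold $W$ (not merely to $V$) and that the conjugations it introduces on $\pi_1$ become trivial after passing to the quotient through which $\widetilde\psi$ factors; this is where the hypothesis that $\widetilde\psi$ comes from a representation of $\pi_1(\Sigma)/H$ is essential. The genuine obstacle is in (3): I must keep careful track of the boundary ``plugs'' of equation \eqref{eq:plugs} while excising the piece $P$ over the filling disk, so that $W(1,g)\cup P$ really is the product $N_g\times[0,1]$ and so that $P$ carries the restricted coefficient system with vanishing twisted signature. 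All of these arguments apply verbatim in both the finite-dimensional and the $\ell^{(2)}$ settings, since they use only oriented-homeomorphism invariance, behavior under orientation reversal, and additivity of the (von Neumann) signature.
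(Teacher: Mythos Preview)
Your approach is correct but takes a genuinely different route from the paper's. The paper uses a single uniform technique for all three parts: it compares \emph{boundaries} rather than the $4$-manifolds themselves. For (1), since $N_{fg}\cong N_{gf}$ one has $\partial W(f,g)=\partial W(g,f)$; gluing $W(f,g)\cup(-W(g,f))$ along their common boundary yields a closed $4$-manifold, and Atiyah's $L^{(2)}$-signature theorem forces its signature defect to vanish, whence $\sigma_\psi(f,g)=\sigma_\psi(g,f)$. Parts (2) and (3) follow the identical pattern, using $N_{f^{-1}}=-N_f$ and $N_{\mathrm{id}}\cong -N_{\mathrm{id}}$ respectively.

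Your argument for (1) and (2) via explicit homeomorphisms $W(f,g)\cong W(g,f)$ and $W(f^{-1},g^{-1})\cong -W(f,g)$ is valid and in a sense more direct: the decomposition $W(f,g)\cong(N_f\times I)\cup_{\Sigma\times A}(N_g\times I)$ makes the first symmetry transparent by swapping the two pieces, and $-N_f\cong N_{f^{-1}}$ gives the second. This bypasses Atiyah's theorem entirely for those two parts. Your formal reduction of the $fg=1$ case to (1) and (2) is also slicker than what the paper does. The only place your route is heavier is the base case $f=1$: the capping argument with $P=\Sigma\times D^2$ does work (one checks that $W(1,g)\cup_{N_1}P$ deformation retracts onto the boundary copy $N_g\times\{0\}$, so its intersection form, twisted or not, vanishes identically), but the plug bookkeeping you flag is real and somewhat fiddly, whereas the paper dispatches this case in one line via $\partial(-W(f,\mathrm{id}))\cong\partial W(f,\mathrm{id})$ and the same closed-gluing trick.
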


\begin{proof} By Definition~\ref{defcocycle}, $\sigma_\psi(f,g)$ is the twisted signature defect of the $4$-manifold $W(f,g)$ which has boundary $N_f\sqcup N_g\sqcup(-N_{fg})$ and $\sigma_\psi(g,f)$ is the twisted signature defect of the $4$-manifold $W(g,f)$ which has boundary $N_f\sqcup N_g\sqcup(-N_{gf})$. But, as previously observed, $N_{fg}=N_{gf}$. Thus $\partial W(f,g)=\partial W(g,f)$. Form the closed $4$-manifold $\overline{W}(f,g)=W(f,g)\cup -W(g,f)$. Since both the twisted signature and the ordinary signature are additive for manifolds glued along entire components of their boundaries, and since both signatures change sign upon changing orientation, the signature defect of $\overline{W}$ is
$$
\sigma_\psi(f,g)-\sigma_\psi(g,f).
$$
But by Atiyah's $L^{(2)}$-signature theorem ~\cite{A76} , the signature defect for a closed $4$-manifold is zero.  It follows that $\sigma_\psi(f,g)=\sigma_\psi(g,f)$.  The second property follows similarly by noting that
$$
\partial(-W(f,g))=-N_f\sqcup -N_g\sqcup\ N_{fg}=N_{f^{-1}}\sqcup N_{g^{-1}}\sqcup \ -N_{g^{-1}f^{-1}}=\partial W(f^{-1},g^{-1}),
$$
since $N_{g^{-1}f^{-1}}=N_{f^{-1}g^{-1}}$. The third property follows similarly upon noting that since
$\text{id}^{-1}=\text{id}$, $-N_{id}\cong N_{id}$, so
$$
\partial(-W(f,id))=-N_f\sqcup -N_{id}\sqcup N_{f}= N_{f}\sqcup N_{id}\sqcup -N_f\cong \partial(W(f,id).
$$
Thus $2\sigma_\psi(f,id)=0$. The other results follow similarly.
\end{proof}

We postpone  the proof that $\sigma_\psi$ satisfies the cocyle condition until Section~\ref{sec:propsigcocycles}, although it can be established using the ideas of the proof of Proposition~\ref{prop:easypropsigma}

We now observe that these signature cochains are intimately related to
the higher-order $\rho$-invariants.

\begin{proposition}\label{prop:rhoissigdefect} For each $\psi$
$$
\sigma_{\psi}(f, g)= -\rho_{\psi}(fg) +  \rho_{\psi}(f) +  \rho_{\psi}(g).
$$
where $\rho_\psi(f)$ is the higher-order $\rho$-invariant of f corresponding to $\psi$ as in Definition~\ref{def:rho}.
\end{proposition}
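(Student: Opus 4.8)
The plan is to deduce the identity directly from the Atiyah--Patodi--Singer signature theorem (in the finite-dimensional case) and its von Neumann analogue (in the $\ell^{(2)}$ case), which together assert that the twisted signature defect of a compact $4$-manifold equals the $\rho$-invariant of its boundary, computed with respect to any coefficient system restricting to the given one on the boundary. Concretely, if $W$ is a compact oriented $4$-manifold and $\widetilde\psi:\pi_1(W)\to U(\mathcal H)$ is a coefficient system, then
$$
\sigma(W;\widetilde\psi)-n\,\sigma(W)=\rho(\partial W,\widetilde\psi|_{\partial W})
$$
in the finite-dimensional case (with $n=\dim\mathcal H$), and correspondingly $\sigma^{(2)}_\G(W;\widetilde\psi)-\sigma(W)=\rho(\partial W,\widetilde\psi|_{\partial W})$ in the $\ell^{(2)}$ case. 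Both are among the basic properties of the $\rho$-invariants recorded in Section~\ref{sec:sigprops}; the finite-dimensional version is \cite{APS2} and the von Neumann version follows from \cite{ChGr1,A76}. The point is that the left-hand sides are, by Definition~\ref{defcocycle}, exactly $\sigma_\psi(f,g)$ when $W=W(f,g)$.

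First I would apply this formula with $W=W(f,g)$. By the construction in Section~\ref{sec:Defsigs} we have $\partial W(f,g)=N_f\sqcup N_g\sqcup -N_{fg}$, and the extended representation $\widetilde\psi$ restricts to $\psi_f$, $\psi_g$, and $\psi_{fg}$ on the three boundary components respectively; this is precisely the statement already established there that $\widetilde\psi$ exists and restricts correctly. Next I would invoke two elementary properties of the $\rho$-invariant: it is additive under disjoint union (since the underlying $\eta$-invariants are), and it changes sign under orientation reversal, $\rho(-Y,\alpha)=-\rho(Y,\alpha)$ (because $\eta(-Y)=-\eta(Y)$). These give
$$
\rho(\partial W(f,g),\widetilde\psi)=\rho(N_f,\psi_f)+\rho(N_g,\psi_g)-\rho(N_{fg},\psi_{fg}).
$$
Recalling that $\rho_\psi(f)=\rho(N_f,\psi_f)$ by Definition~\ref{def:rho}, the right-hand side is $\rho_\psi(f)+\rho_\psi(g)-\rho_\psi(fg)$, and combining with the signature theorem yields the claimed formula $\sigma_\psi(f,g)=-\rho_\psi(fg)+\rho_\psi(f)+\rho_\psi(g)$.

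The only genuine work is bookkeeping, so the main point to be careful about is the orientation and sign conventions: one must check that the orientation induced on $N_{fg}$ as a boundary component of $W(f,g)$ is the reverse of the chosen orientation of $N_{fg}$ (so that it contributes $-\rho_\psi(fg)$), and that the sign convention for $\rho$ built into Definition~\ref{def:rho} is the one for which the defect formula reads $\sigma-n\sigma=+\rho$ rather than $-\rho$. In the $\ell^{(2)}$ case there is the additional technical obligation to confirm that the von Neumann signature defect formula applies with $W(f,g)$ as bounding manifold; this rests on Atiyah's $L^{(2)}$-signature theorem \cite{A76}, exactly as already used in the proof of Proposition~\ref{prop:easypropsigma}, together with independence of the defect from the choice of bounding $4$-manifold. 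No new ideas beyond these are needed, since the hardest structural input---the existence of a coefficient system on $\pi_1(W(f,g))$ restricting correctly to all three boundary components---has already been supplied in Section~\ref{sec:Defsigs}.
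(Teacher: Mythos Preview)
Your proposal is correct and follows essentially the same approach as the paper: the paper simply notes that $\partial(W(f,g),\widetilde\psi)=(N_f,\psi_f)\sqcup(N_g,\psi_g)\sqcup(-N_{fg},\psi_{fg})$ and then invokes exactly the Atiyah--Patodi--Singer/Ramachandran signature defect formula (stated there as Theorem~\ref{thm:APS}). The only minor discrepancy is attribution---the paper cites Ramachandran~\cite{Ra} rather than Cheeger--Gromov for the $\ell^{(2)}$ version of the defect formula---but mathematically your argument is the same.
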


\begin{proof} Since
$$
\partial \left(W(f,g),\tilde{\psi}\right)=(N_f,\psi_f)\sqcup (N_g,\psi_g)\sqcup (-N_{fg},\psi_{fg}),
 $$
the proof follows immediately from our definition and the following results of Atiyah-Patodi-Singer (in the finite-dimensional case) and Ramachandran (in the $\ell^{(2)}$ case)(see also \cite{LS}).

\begin{theorem}\label{thm:APS}~\cite{APS2,Ra} Given a compact, smooth, orientable $4$-manifold $W$ and an extension $\widetilde{\psi} : \pi_1(W) \to U(n)$ of $\psi$ then
$$
\rho(\partial W,\psi)=\sigma(W,\widetilde{\psi})-n\sigma(W),
$$
where $\psi$ is the restriction of $\widetilde{\psi}$, $\sigma(W,\widetilde{\psi})$ is the signature of the twisted intersection form on $H_2(W;\widetilde{\psi})$ and $\sigma(W)$ is the signature of the ordinary intersection form on $H_2(W;\mathbb{C})$. Similarly given $\widetilde{\phi}: \pi_1(W)\to \Gamma$
$$
\rho(\partial W,\ell_r\circ\phi)=\sigma^{(2)}_\G(W,\ell_r\circ\widetilde{\phi})-\sigma(W).
$$
\end{theorem}
\end{proof}

One elementary consequence is:

\begin{corollary}\label{cor:easyrhoprops} $\rho_\psi(id)=0$.
\end{corollary}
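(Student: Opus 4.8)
The plan is to deduce this immediately from the cochain relation of Proposition~\ref{prop:rhoissigdefect} together with the vanishing of $\sigma_\psi$ on trivial inputs recorded in Proposition~\ref{prop:easypropsigma}. First I would specialize the identity
$$
\sigma_{\psi}(f,g)= -\rho_{\psi}(fg)+\rho_{\psi}(f)+\rho_{\psi}(g)
$$
to $f=g=id$. Since $id\in J(H)$ for any characteristic $H$, both sides are defined, and because $id\cdot id=id$ the right-hand side collapses to $-\rho_\psi(id)+2\rho_\psi(id)=\rho_\psi(id)$. Hence $\sigma_\psi(id,id)=\rho_\psi(id)$.

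Next I would invoke part (3) of Proposition~\ref{prop:easypropsigma}, which asserts that $\sigma_\psi(f,g)=0$ whenever $f=1$, $g=1$, or $fg=1$. Taking $f=g=id$ forces $\sigma_\psi(id,id)=0$, and combining this with the previous display yields $\rho_\psi(id)=0$.

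There is no genuine obstacle here: the statement is a one-line formal consequence of the two preceding propositions, and the only point to check is that their hypotheses apply at $f=g=id$, which is immediate. If a more geometric argument is preferred, one may instead observe that $N_{id}$ is obtained from the product mapping torus $\Sigma\times S^1$ by the standard Dehn fillings, so the orientation-preserving homeomorphism $M_{id}\cong -M_{id}$ inducing $(x,t)\mapsto(x,-t)$ (noted in Subsection~\ref{subsection:3manifolds} for $f=f^{-1}=id$) extends to an orientation-reversing self-homeomorphism of $N_{id}$ compatible with $\psi_{id}$; since $\rho$ changes sign under orientation reversal, this gives $\rho_\psi(id)=-\rho_\psi(id)$ and hence $\rho_\psi(id)=0$. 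I would present the algebraic route, as it is cleaner and relies only on results already established in the excerpt.
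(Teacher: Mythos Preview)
Your proof is correct and follows exactly the same approach as the paper: apply Proposition~\ref{prop:rhoissigdefect} with $f=g=\mathrm{id}$ and then invoke part~(3) of Proposition~\ref{prop:easypropsigma}. The optional geometric argument you sketch is not in the paper, but the algebraic route you present as your main argument matches the paper's proof verbatim.
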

\begin{proof} Merely apply Proposition~\ref{prop:rhoissigdefect} with $f=g=$id and then apply part $3$ of Proposition~\ref{prop:easypropsigma}.
\end{proof}

The following result is useful.

\begin{proposition}\label{prop:V=W} If $\partial\Sigma$ is connected, for any $H$, $\psi$, $f$ and $g$, the twisted and untwisted signatures of $V(f,g)$ and $W(f,g)$ are equal.
\end{proposition}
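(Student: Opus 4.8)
The plan is to compare the intersection forms of $V=V(f,g)$ and $W=W(f,g)$ directly through the inclusion $\iota\colon V\hookrightarrow W$, and to show that $\iota$ induces an isometry between the nondegenerate quotients of these forms, both with ordinary $\mathbb{C}$-coefficients and with the $\widetilde\psi$-twisted coefficients. Since the signature of a (possibly degenerate) Hermitian form equals the signature of the induced nondegenerate form on the quotient by its radical, this will give $\sigma(V)=\sigma(W)$ and $\sigma(V;\widetilde\psi)=\sigma(W;\widetilde\psi)$ simultaneously, and the proposition follows by subtraction. The linear-algebra fact I will invoke is elementary: if $\iota_*\colon(H_V,\lambda_V)\to(H_W,\lambda_W)$ satisfies $\lambda_W(\iota_*a,\iota_*b)=\lambda_V(a,b)$ with $\ker\iota_*\subseteq\operatorname{rad}\lambda_V$ and $H_W=\operatorname{im}\iota_*+\operatorname{rad}\lambda_W$, then $\iota_*$ descends to an isometric isomorphism $H_V/\operatorname{rad}\lambda_V\to H_W/\operatorname{rad}\lambda_W$. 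The compatibility $\lambda_W(\iota_*a,\iota_*b)=\lambda_V(a,b)$ is automatic, since classes in $H_2(V)$ and their intersections can be represented in the interior of $V\subseteq W$, and I recall that the radical of the intersection form of a compact oriented $4$-manifold $X$ is $\operatorname{im}\big(H_2(\partial X)\to H_2(X)\big)$, twisted or untwisted.

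The geometric input is the relative homology $H_*(W,V)$ together with the two connecting homomorphisms. Because $\partial\Sigma\cong S^1$, equation~\eqref{eq:plugs} attaches two plugs, each of the form $P=[0,\epsilon]\times S^1\times D^2$, glued to $V$ along $A=[0,\epsilon]\times S^1\times\partial D^2$, a collar of the boundary torus $T=\partial M\subseteq\partial V$. Excision gives $H_*(W,V)\cong H_*(P_f,A_f)\oplus H_*(P_g,A_g)$, and collapsing the $[0,\epsilon]$-factor identifies each pair with (solid torus, boundary torus); hence each contributes a $\mathbb{Z}$ in degrees $2$ and $3$ and nothing else, the degree-$3$ generator being the fundamental class and the degree-$2$ generator the meridian disk. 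In the long exact sequence of $(W,V)$ the degree-$3$ classes map to the boundary-torus classes $[T_f],[T_g]\in H_2(V)$, while the degree-$2$ classes (the meridian disks) map to $[t],[s]\in H_1(V)$, where $t$ and $s$ are the two circles the plugs fill. I will run the same computation with $\widetilde\psi$-coefficients, using that the restriction of $\widetilde\psi$ sends the core $\partial\Sigma$ to $\zeta:=\psi(\partial\Sigma)$ and the meridian $t$ to $1$; the twisted relative homology of each plug is then $\ker(\zeta-1)$ in degrees $2$ and $3$.

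It remains to check the two hypotheses of the lemma in both coefficient systems. That $\ker\iota_*\subseteq\operatorname{rad}\lambda$ is clear: $\ker\iota_*$ is the image of the degree-$3$ connecting map, namely the span of $[T_f],[T_g]$, and these classes are represented by tori in $\partial V$, hence lie in $\operatorname{im}(H_2(\partial V)\to H_2(V))=\operatorname{rad}\lambda_V$; the twisted statement is identical. For $H_W=\operatorname{im}\iota_*+\operatorname{rad}\lambda_W$, one analyzes $\operatorname{coker}\iota_*$, which is the kernel of the degree-$2$ connecting map sending the meridian disks to $[t],[s]$. With ordinary coefficients $[t]$ and $[s]$ have infinite order and are independent in $H_1(V;\mathbb{Z})$ (kill $\pi_1(\Sigma)$ normally to obtain the surjection $\pi_1(V)\twoheadrightarrow\langle t,s\rangle\cong F_2$), so this map is injective, $\operatorname{coker}\iota_*=0$, and $\iota_*$ is onto. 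With twisted coefficients the map may fail to be injective, but the point is that each meridian disk is a parallel copy of a Dehn-filling disk of $N_f$ (respectively $N_g$), which lies in $\partial W$; since the filled curve $t$ is null-homotopic in $N_f$, the corresponding class of $H_2(W;\widetilde\psi)$ is carried by $\partial W$ and therefore lies in $\operatorname{im}(H_2(\partial W;\widetilde\psi)\to H_2(W;\widetilde\psi))=\operatorname{rad}\lambda_W$. This yields $H_W=\operatorname{im}\iota_*+\operatorname{rad}\lambda_W$ and completes the proof.

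I expect the twisted cokernel step to be the main obstacle: one must make precise that the extra twisted $H_2$-classes produced by capping the meridian disks are genuinely supported in $\partial W$ (and so contribute nothing to the signature), rather than pairing nontrivially with the image of $\iota_*$. This is exactly where connectedness of $\partial\Sigma$ is used, as it guarantees that the only curves being filled are $t$ and $s$, each null-homotopic in the relevant $N_f$ or $N_g$; for disconnected $\partial\Sigma$ the additional meridians would require separate treatment. An equivalent packaging of the whole argument is the vanishing of Wall's non-additivity correction for the decomposition $W=V\cup_A P$: since each plug is homotopy equivalent to a circle, $\sigma(P)=\sigma(P;\widetilde\psi)=0$, so the only possible discrepancy between $\sigma(V)$ and $\sigma(W)$ is the Maslov correction term, and the homological analysis above is precisely a verification that this term vanishes.
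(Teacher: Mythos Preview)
Your overall architecture matches the paper's: $W$ is obtained from $V$ by attaching, along each thickened boundary torus, a thickened solid torus, i.e.\ a $2$-handle along $t$ (respectively $s$) followed by a $3$-handle; one then checks that neither attachment affects the twisted or untwisted signature. The divergence is in how you treat the twisted case.

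The paper's point is that the \emph{same} injectivity argument you gave for untwisted coefficients works verbatim with $\widetilde\psi$-coefficients. Since $\widetilde\psi$ factors through $\pi_1(W)$, in which $t$ and $s$ are killed, the restriction of $\widetilde\psi$ to the attaching circles and to the base $D$ of the fibration $V\to D$ is trivial. Hence
\[
H_1(S^1\sqcup S^1;\widetilde\psi)\cong H_1(S^1\sqcup S^1)\otimes\mathcal{H},
\qquad
H_1(D;\widetilde\psi)\cong H_1(D)\otimes\mathcal{H},
\]
and the composition $H_1(S^1\sqcup S^1;\widetilde\psi)\to H_1(V;\widetilde\psi)\to H_1(D;\widetilde\psi)$ is the identity. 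Thus the $2$-handle attachments do not change $H_2$ in either coefficient system, and your cokernel is actually zero.

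Your claim that ``with twisted coefficients the map may fail to be injective'' is therefore false here, and the workaround you propose for it is where the real gap lies. Knowing that the meridian disk $D_\epsilon$ sits inside $N_f\subset\partial W$ and that $t=\partial D_\epsilon$ is null-homotopic in $N_f$ does not by itself produce a lift of the putative cokernel class lying in $\operatorname{im}\big(H_2(\partial W;\widetilde\psi)\to H_2(W;\widetilde\psi)\big)$: a lift has the form $[D_\epsilon+c]$ with $c$ a $2$-chain in $V$ bounding $t$, and you would still need to replace $c$ by a chain supported in $\partial W$ without changing the class in $H_2(W;\widetilde\psi)$. You acknowledge this is the ``main obstacle,'' and indeed as written it is not justified. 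The clean resolution is simply to drop this detour and observe, as above, that the connecting map is injective in the twisted case as well; then your linear-algebra lemma applies with $\operatorname{coker}\iota_*=0$ and the proof concludes exactly as in the paper.
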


\begin{corollary}\label{cor:V=W} If $\partial\Sigma$ is connected then $\sigma_\psi(f,g)$ is the difference between the twisted signature and the ordinary signature of $V(f,g)$, which is the total space of the $\Sigma$-bundle over the twice punctured disk whose monodromy around the punctures is $f$ and $g$ respectively.
\end{corollary}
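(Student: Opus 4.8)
The plan is to prove Proposition~\ref{prop:V=W}; the Corollary then follows immediately, since when $\partial\Sigma$ is connected $V(f,g)$ is by construction the total space of the $\Sigma$-bundle over the twice-punctured disk with monodromies $f$ and $g$. Write $i\colon V\hookrightarrow W$ for the inclusion (abbreviating $V=V(f,g)$, $W=W(f,g)$), and let $\mathcal A$ denote any one of the coefficient systems used to define the signatures: the trivial system $\C$, the twisted system $\C^n$ via $\widetilde\psi$, or $\ell^{(2)}(\G)$ via $\ell_r\circ\widetilde\psi$. Because $i$ is a codimension-$0$ inclusion of oriented $4$-manifolds, intersection numbers of classes carried by $V$ are the same computed in $V$ or in $W$, so $i_*$ intertwines the (twisted) intersection forms $\lambda_V$ and $\lambda_W$. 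Since the signature of a hermitian form is unchanged upon quotienting by any subspace of its radical (and, in the $\ell^{(2)}$ setting, the von Neumann signature has the analogous property), it suffices to show that $i_*\colon H_2(V;\mathcal A)\to H_2(W;\mathcal A)$ is surjective with $\ker i_*$ contained in the radical of $\lambda_V$. This reduces the twisted and untwisted equalities to one homological statement.

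To analyze $i_*$ I would compute the relative homology by excision. By \eqref{eq:plugs}, $W=V\cup P$ where $P=P_f\sqcup P_g$ is the union of the two plugs and $Y:=P\cap V=A_f\sqcup A_g$ is the attaching region. This is the one place where connectedness of $\partial\Sigma$ enters the bookkeeping: each plug is then $[0,\epsilon]\times\partial\Sigma\times D^2$ with $\partial\Sigma\cong S^1$, so as a pair
$$
(P_f,A_f)\;\cong\;[0,\epsilon]\times\partial\Sigma\times(D^2,\partial D^2).
$$
Since $\mathcal A$ is trivial along the simply connected $D^2$ factor, the Künneth theorem gives $H_k(W,V;\mathcal A)\cong H_k(P,Y;\mathcal A)=0$ for $k\le 1$, with the groups in degrees $2$ and $3$ generated, respectively, by the meridional disk classes (capping the circles $t$ and $s$) and by the core-torus classes $[\partial M_f],[\partial M_g]$.

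Next I would run the long exact sequence of $(W,V)$ with coefficients $\mathcal A$. The vanishing of $H_{\le 1}(W,V;\mathcal A)$ identifies $\ker i_*$ with the image of $\partial\colon H_3(W,V;\mathcal A)\to H_2(V;\mathcal A)$, which by the computation above is spanned by the core-torus classes $[\partial M_f],[\partial M_g]$, and these lie in the image of $H_2(\partial V;\mathcal A)\to H_2(V;\mathcal A)$. By the standard fact that boundary classes may be pushed into a collar (valid for every coefficient system, the $\ell^{(2)}$ one included), this image lies in the radical of $\lambda_V$, giving $\ker i_*\subseteq\operatorname{rad}(\lambda_V)$. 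Dually, $\operatorname{coker} i_*=\ker\big(\partial\colon H_2(W,V;\mathcal A)\to H_1(V;\mathcal A)\big)$, and $\partial$ sends the meridional disk classes to the classes of $t$ and $s$. The useful observation is that $t$ and $s$ die in $\pi_1(W)\cong\pi_1(\Sigma)/H$, so $\widetilde\psi(t)=\widetilde\psi(s)=\mathrm{id}$; hence for any $v\in\C^n$ the chain $v\otimes t$ is an honest twisted $1$-cycle, and such cycles are non-trivial in $H_1(V;\mathcal A)$. Thus $\partial$ is injective, $i_*$ is onto, and the proof is complete.

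The step I expect to be the main obstacle is exactly the surjectivity of $i_*$, i.e.\ the vanishing of $\operatorname{coker} i_*$, in the genuinely twisted and $\ell^{(2)}$ cases. One must show that the meridional classes of $t$ and $s$ stay non-trivial in $H_1(V;\mathcal A)$; although $\widetilde\psi(t)=\widetilde\psi(s)=\mathrm{id}$ makes the untwisted-style argument plausible, the twisted homology of the plugs is governed by $\widetilde\psi(\partial\Sigma)=\psi([\partial\Sigma])$, and one must track how $\ker\big(\psi([\partial\Sigma])-I\big)$ and its coinvariants feed into the connecting maps. In the $\ell^{(2)}$ setting the words ``non-trivial'' and ``injective'' should be replaced by matching statements about von Neumann dimensions, using that $\ell_r$ has no invariant vectors for non-trivial group elements, together with Atiyah's $L^{(2)}$-signature theorem (as in the proof of Proposition~\ref{prop:easypropsigma}) to justify the additivity and radical properties of $\sigma^{(2)}_\G$.
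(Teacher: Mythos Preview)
Your approach via the long exact sequence of the pair $(W,V)$ is essentially equivalent to the paper's argument, which proceeds handle-by-handle: add the two $2$-handles first to form $\overline{W}$, then the two $3$-handles. Both arguments reduce to the same crucial step, namely that the classes of the attaching circles $t,s$ are linearly independent in $H_1(V;\mathcal A)$ for every coefficient system $\mathcal A$ under consideration. You correctly identify this as the obstacle, but you do not actually prove it; the assertion ``such cycles are non-trivial in $H_1(V;\mathcal A)$'' is exactly the missing ingredient.

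The paper fills this gap with a clean observation you are overlooking: use the bundle projection $\pi\colon V\to D$. Since $t,s$ generate $\pi_1(D)$ and both die in $\pi_1(W)\cong\pi_1(\Sigma)/H$, the coefficient system on $D$ is \emph{trivial}. Hence the composition
\[
H_1(S^1\sqcup S^1;\mathcal A)\xrightarrow{\ i_*\ } H_1(V;\mathcal A)\xrightarrow{\ \pi_*\ } H_1(D;\mathcal A)\cong H_1(S^1\vee S^1;\mathcal A)
\]
is the identity (the untwisted isomorphism tensored with $\mathcal A$), so $i_*$ is injective. This disposes of the surjectivity of your $i_*\colon H_2(V)\to H_2(W)$ uniformly in all three coefficient systems, with no need to track $\psi([\partial\Sigma])$ or worry separately about the $\ell^{(2)}$ case.

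One further remark: the paper's handle-by-handle decomposition is slightly more efficient than your excision from the full plugs. By attaching the $2$-cells along $t\sqcup s$ first, the relevant relative group is simply $H_1(t\sqcup s;\mathcal A)\cong\mathcal A\oplus\mathcal A$, independent of $\psi([\partial\Sigma])$. In your setup $H_2(W,V;\mathcal A)$ already involves the coinvariants of $\psi([\partial\Sigma])$, which is the complication you anticipate in your final paragraph; it can be handled, but the handle decomposition avoids it entirely. Your treatment of the kernel (boundary torus classes lie in the radical) is fine and matches the paper's remark that the $3$-handles, being attached along spheres in $\partial\overline{W}$, do not affect the signature.
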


\begin{proof}[Proof of Proposition~\ref{prop:V=W}] Recall that $W=W(f,g)$ is obtained from $V=V(f,g)$ by adjoining a disjoint union of two thickened solid tori along a disjoint union of two thickened tori (one for $\partial M_f$ and one for $\partial M_g$). Since a solid torus is obtained from its boundary by adjoining a single $2$-handle and then a $3$-handle, the passage from $V$ to $W$ may be accomplished by adding two $2$-handles and then two $3$-handles. Let $\overline{W}$ denote the union of $V$ and these $2$-cells. We will show that $H_2(V)\cong H_2(\overline{W})$ with either twisted or untwisted coefficients. It suffices to show that
\begin{equation}\label{eq:MV0}
 H_1(S^1\sqcup S^1) \overset{i_*}{\longrightarrow} H_1(V)
\end{equation}
is injective where $S^1\sqcup S^1$ are the attaching circles $s$ and $t$ of the $2$-cells. Since $V\overset{\pi_*}\to D$ is a fibration, where $D$ is the $2$-disk with two open subdisks deleted, $D\to S^1\vee S^1$ is a deformation retraction and the map
$$
H_1(S^1\sqcup S^1) \overset{i_*}{\longrightarrow} H_1(V) \overset{\pi_*}\longrightarrow H_1(D)\to H_1(S^1\vee S^1)
$$
is the identity map. Note that the coefficient system $\psi$ is trivial on $S^1\sqcup S^1$, so
$$
H_1(S^1\sqcup S^1)\cong H_1(S^1\vee S^1),
$$
with twisted or untwisted coefficients. Thus $i_*$ is injective. The addition of $3$-handles will not change the signature since their attaching spheres are homology classes carried by the boundary of $\overline{W}$. This shows that the twisted and untwisted signatures of $W$ and $V$ agree.
\end{proof}

\section{Higher-order signature cocycles and group cohomology}\label{sec:propsigcocycles}

In this section we observe that each $\sigma_\psi$ is a bounded 2-cocycle in the group cohomology of $J(H)$, and, with $\R$-coefficients, $\sigma_\psi$ is the coboundary of $\rho_\psi$.

We review the definition of group cohomology with coefficents in a trivial module. If $G$ is a group and $A$ is an abelian group (viewed as a trivial $G$-module), set $G^p=G\times\cdots\times G$ and define the group of $A$-valued \textbf{p-cochains} to be
$$
C^p(G;A)=\{\rho:G^p\to A\}.
$$
Define $\delta:C^p(G;A)\to C^{p+1}(G;A)$ by
\begin{equation}\label{eq:coboundary}
\delta\rho(f_0,...,f_{p})=\rho(f_1,...,f_{p})+ \sum_{i=1}^p (-1)^i\rho(f_0,...,f_{i-1}f_i,...,f_p)+(-1)^{p+1}\rho(f_0,...,f_{p-1}).
\end{equation}
Then, $H^p(G;A)$, the \textbf{cohomology of $G$ with coefficients in $A$} is defined to be the homology of the complex $\{C^*(G;A),\delta\}$ ~\cite{BR}. A cochain with values in $A\subset\mathbb{R}$ is called a \textbf{bounded cochain} if its range is bounded as a subset of $\mathbb{R}$. The bounded cochains form a subcomplex $C^*_b(G;\mathbb{R})\subset C^*(G;\mathbb{R})$. The homology of this (co)-chain complex is called the \textbf{bounded cohomology of G}.

In the following $A$ is the trivial module trivial module where $A=\Z$ if $\psi$ is finite-dimensional and $A=\R$ if $\psi$ is infinite-dimensional.

\begin{proposition}\label{prop:cobrhoissigma}  Under the map $i^2_{\#}:C^2(J;A)\to C^2(J;\mathbb{R})$
$$
i^2_{\#}(\sigma_\psi)= \delta_\mathbb{R}(\rho_\psi).
$$
The subscript $\mathbb{R}$ is to emphasize that we are speaking of cohomology with real coefficients (since $\rho$ is real-valued). Thus, if $\psi$ is infinite-dimensional,  $\sigma_\psi$ is a 2-coboundary, while  if $\psi$ is finite-dimensional, $\sigma_\psi$ may not be a coboundary (with $\Z$ coefficients).
\end{proposition}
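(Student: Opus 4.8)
The plan is to leverage the relationship already established in Proposition~\ref{prop:rhoissigdefect}, which states that
$$
\sigma_\psi(f,g) = -\rho_\psi(fg) + \rho_\psi(f) + \rho_\psi(g).
$$
The key observation is that this is exactly the formula for the coboundary of the $1$-cochain $\rho_\psi$ as given by the general coboundary formula \eqref{eq:coboundary} specialized to $p=1$. First I would write out $\delta\rho_\psi$ explicitly: for a $1$-cochain $\rho$, equation \eqref{eq:coboundary} with $p=1$ gives
$$
\delta\rho(f_0,f_1) = \rho(f_1) - \rho(f_0 f_1) + \rho(f_0).
$$
Matching indices ($f_0 = f$, $f_1 = g$) this reads $\delta\rho_\psi(f,g) = \rho_\psi(g) - \rho_\psi(fg) + \rho_\psi(f)$, which is precisely the right-hand side of the identity in Proposition~\ref{prop:rhoissigdefect}. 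Hence $\sigma_\psi(f,g) = \delta\rho_\psi(f,g)$ as real-valued functions on $J(H)\times J(H)$.

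The only subtlety, and the reason the map $i^2_\#$ appears, is the bookkeeping of coefficient groups. In the finite-dimensional case $\sigma_\psi$ takes values in $A = \mathbb{Z}$, whereas $\rho_\psi$ is genuinely real-valued, so $\delta_\mathbb{R}(\rho_\psi)$ naturally lives in $C^2(J;\mathbb{R})$. The equality therefore cannot be asserted in $C^2(J;\mathbb{Z})$; rather one must first apply the change-of-coefficients map $i^2_\#:C^2(J;A)\to C^2(J;\mathbb{R})$ induced by the inclusion $A\hookrightarrow\mathbb{R}$, and then observe that the two real-valued cochains $i^2_\#(\sigma_\psi)$ and $\delta_\mathbb{R}(\rho_\psi)$ coincide. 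So the second step is simply to note that the pointwise identity above, read in $\mathbb{R}$, yields $i^2_\#(\sigma_\psi) = \delta_\mathbb{R}(\rho_\psi)$. In the infinite-dimensional case $A = \mathbb{R}$ already, so $i^2_\#$ is the identity and one gets $\sigma_\psi = \delta_\mathbb{R}(\rho_\psi)$ directly.

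The final interpretive remarks follow formally. Since $\sigma_\psi$ is a coboundary over $\mathbb{R}$, it represents the zero class in $H^2(J;\mathbb{R})$. In the infinite-dimensional case this means $\sigma_\psi$ is literally a $2$-coboundary in $C^2(J;\mathbb{R})$. In the finite-dimensional case, however, the equality only holds after passing to $\mathbb{R}$-coefficients; the integral cochain $\sigma_\psi$ itself need not be a coboundary in $C^2(J;\mathbb{Z})$ because $\rho_\psi$ takes irrational (real) values and so provides no integral $1$-cochain witnessing the coboundary relation. This is exactly the content of property $5$ in Theorem~\ref{thm:Meyerprops}, namely that $[\sigma_\psi]$ lies in the kernel of the map $H^2(J(H);\mathbb{Z})\to H^2(J(H);\mathbb{R})$.

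Honestly, there is no real obstacle here: the entire proposition is a formal consequence of the already-proved Proposition~\ref{prop:rhoissigdefect} together with the definition of the coboundary operator on $1$-cochains. The only thing requiring any care is being precise about which coefficient ring each cochain lives in and inserting the map $i^2_\#$ so that the stated equality typechecks; this is a matter of bookkeeping rather than mathematical difficulty.
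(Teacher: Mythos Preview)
Your proposal is correct and takes essentially the same approach as the paper: compute $\delta_\mathbb{R}\rho_\psi$ from the coboundary formula \eqref{eq:coboundary} with $p=1$ and then invoke Proposition~\ref{prop:rhoissigdefect}. Your write-up is more explicit about the coefficient bookkeeping behind $i^2_\#$, but the mathematical content is identical.
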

\begin{proof}  By equation ~(\ref{eq:coboundary}),
$$
(\delta_\mathbb{R}\rho_\psi)(f,g)= \rho_\psi(g)-\rho_\psi(fg)+\rho_\psi(f).
$$
The latter equals $\sigma_\psi(f,g)$ by Proposition~\ref{prop:rhoissigdefect}.
\end{proof}

\begin{corollary}\label{cor:cocycle} The function $\sigma_{\psi}=\sigma\hspace{-2pt}:\hspace{-2pt}J\times J\to A$ given by $(f,g)\to \sigma_\psi(f,g)$  is a $2$-cocycle of $J$ with values in $A$.
\end{corollary}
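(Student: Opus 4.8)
The plan is to deduce the cocycle condition formally from Proposition~\ref{prop:cobrhoissigma}, using only that coboundaries square to zero and that the coefficient inclusion $A\hookrightarrow\mathbb{R}$ is injective on cochains. First I would recall what must be shown: the cocycle condition asserts that $\delta\sigma_\psi=0$ in $C^3(J;A)$, where by equation~\eqref{eq:coboundary},
$$
(\delta\sigma_\psi)(f,g,h)=\sigma_\psi(g,h)-\sigma_\psi(fg,h)+\sigma_\psi(f,gh)-\sigma_\psi(f,g).
$$

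Next I would compare the two coefficient systems. The inclusion $A\subset\mathbb{R}$ induces cochain maps $i^p_\#:C^p(J;A)\to C^p(J;\mathbb{R})$ that commute with the coboundary, so $i^3_\#\circ\delta_A=\delta_{\mathbb{R}}\circ i^2_\#$. By Proposition~\ref{prop:cobrhoissigma}, $i^2_\#(\sigma_\psi)=\delta_{\mathbb{R}}(\rho_\psi)$ in $C^2(J;\mathbb{R})$. Applying $\delta_{\mathbb{R}}$ and using $\delta_{\mathbb{R}}^2=0$ gives
$$
i^3_\#(\delta_A\sigma_\psi)=\delta_{\mathbb{R}}\bigl(i^2_\#(\sigma_\psi)\bigr)=\delta_{\mathbb{R}}\bigl(\delta_{\mathbb{R}}(\rho_\psi)\bigr)=0.
$$
Finally, since $A\hookrightarrow\mathbb{R}$ is injective, the induced map $i^3_\#$ on $3$-cochains is injective; hence $\delta_A\sigma_\psi=0$, which is exactly the cocycle condition.

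This argument is uniform across the two cases: when $\dim\mathcal{H}=\infty$ we have $A=\mathbb{R}$ and $i^3_\#$ is the identity, while when $\dim\mathcal{H}=n<\infty$ we have $A=\mathbb{Z}$ and the injectivity of $\mathbb{Z}\hookrightarrow\mathbb{R}$ does the work. There is essentially no obstacle here; the only point requiring any care is that, in the finite-dimensional case, $\sigma_\psi$ is genuinely $\mathbb{Z}$-valued, so $\delta_A\sigma_\psi$ is an honest $\mathbb{Z}$-valued cochain whose vanishing cannot be read off directly from the real-coefficient identity without the injectivity observation. For completeness one could instead verify the identity geometrically, as indicated in the remark following Proposition~\ref{prop:easypropsigma}: glue the relevant copies of $W(\cdot,\cdot)$ along common boundary components to form a closed $4$-manifold whose total signature defect equals $(\delta\sigma_\psi)(f,g,h)$, and apply Novikov additivity together with Atiyah's $L^{(2)}$-signature theorem to conclude this defect vanishes. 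The algebraic route is shorter, and is the one I would record.
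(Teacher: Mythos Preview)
Your argument is correct and is essentially identical to the paper's own proof: both use the injectivity of $i^3_\#:C^3(J;A)\to C^3(J;\mathbb{R})$, the naturality $i^3_\#\circ\delta_A=\delta_{\mathbb{R}}\circ i^2_\#$, Proposition~\ref{prop:cobrhoissigma}, and $\delta_{\mathbb{R}}^2=0$. Your closing remark about the alternative geometric verification via additivity of signatures also mirrors the paper's final comment.
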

\begin{proof}  The map $i^3_{\#}:C^3(J;A)\to C^3(J;\mathbb{R})$ is injective. Thus it suffices to show that $i^3_{\#}\circ \delta_A(\sigma_\psi)=0$. By naturality and Proposition~\ref{prop:cobrhoissigma} ,
$$
i^3_{\#}\circ \delta_A(\sigma_\psi)=\delta_\mathbb{R}\circ i^2_{\#}(\sigma_\psi)= \delta_\mathbb{R}\delta_\mathbb{R}(\rho_\psi)=0.
$$
This corollary can also be proved directly from the definition of $\sigma_\psi$, using additivity properties of the signature.
\end{proof}

\begin{corollary}\label{cor:sigcohomologyclass} If $\psi$ is a finite-dimensional representation then the signature cocycle $\sigma_{\psi}$ represents an element in the kernel of
$$
H^2(J;\mathbb{Z})\to H^2(J;\mathbb{R}).
$$
\end{corollary}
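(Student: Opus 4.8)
The plan is to deduce this immediately from Proposition~\ref{prop:cobrhoissigma} together with the observation of Corollary~\ref{cor:cocycle} that, in the finite-dimensional case, $\sigma_\psi$ is an honest $\Z$-valued $2$-cocycle and hence determines a class $[\sigma_\psi]\in H^2(J;\Z)$. First I would recall that the coefficient homomorphism $H^2(J;\Z)\to H^2(J;\R)$ is, by naturality of group cohomology in the coefficient module, precisely the map on cohomology induced by the cochain-level inclusion $i^2_{\#}:C^2(J;\Z)\to C^2(J;\R)$ coming from $\Z\hookrightarrow\R$. Thus the image of $[\sigma_\psi]$ under the coefficient map is represented at the cochain level by $i^2_{\#}(\sigma_\psi)$.

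The key step is then to invoke Proposition~\ref{prop:cobrhoissigma}, which asserts exactly that
$$
i^2_{\#}(\sigma_\psi)=\delta_\R(\rho_\psi).
$$
Since $\rho_\psi\in C^1(J;\R)$ is a genuine real-valued $1$-cochain (the higher-order $\rho$-invariant of Definition~\ref{def:rho}), the right-hand side is a $2$-coboundary in $C^2(J;\R)$. Therefore the cocycle $i^2_{\#}(\sigma_\psi)$ represents the zero class in $H^2(J;\R)$, which says precisely that the image of $[\sigma_\psi]$ under $H^2(J;\Z)\to H^2(J;\R)$ vanishes. Hence $[\sigma_\psi]$ lies in the kernel of this map, as claimed.

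There is no real obstacle here: the entire content has already been packaged into Proposition~\ref{prop:cobrhoissigma}, and the corollary is just its cohomological reformulation. The only point requiring (standard) care is the identification of the coefficient map $H^2(J;\Z)\to H^2(J;\R)$ with the map induced by $i^2_{\#}$, which is immediate from the functoriality of the cochain complex $C^*(J;-)$ in the coefficient group. I would also emphasize, as the statement's phrasing already does, that this argument produces vanishing only \emph{after} passing to $\R$-coefficients: $\rho_\psi$ is real-valued and need not be integer-valued, so $\sigma_\psi$ itself need not be a coboundary in $C^*(J;\Z)$, and $[\sigma_\psi]$ may well be a nonzero torsion-type class in $H^2(J;\Z)$.
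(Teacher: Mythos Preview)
Your proposal is correct and is exactly the argument the paper intends: the corollary is stated without proof immediately after Proposition~\ref{prop:cobrhoissigma} and Corollary~\ref{cor:cocycle}, and follows from them precisely as you describe. Your added remarks about the identification of the coefficient map with $i^2_{\#}$ and about why $\rho_\psi$ need not be integer-valued are accurate elaborations of what the paper leaves implicit.
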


\begin{theorem}\label{thm:sigbounded} For any n-dimensional representation $\psi$ ,
$$
|\sigma_\psi(f,g)|\leq 2n\beta_1(\Sigma).
$$
In the infinite-dimensional case,
$$
|\sigma_\psi(f,g)|\leq 2\beta_1(\Sigma).
$$
\end{theorem}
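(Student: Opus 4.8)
The plan is to bound the two signatures appearing in Definition~\ref{defcocycle} separately, using that the absolute value of the signature of any Hermitian form is at most its rank. By Proposition~\ref{prop:V=W} in the connected‑boundary case, and directly from the handle description \eqref{eq:plugs} otherwise, I would work with the $\Sigma$‑bundle $V=V(f,g)$ over the twice‑punctured disk $D$ and control the second homology of the total space. Since $\sigma_\psi(f,g)=\sigma(W;\tilde\psi)-n\,\sigma(W)$, it suffices to prove
$$
|\sigma(W;\tilde\psi)|\le n\beta_1(\Sigma)\quad\text{and}\quad |\sigma(W)|\le \beta_1(\Sigma),
$$
for then $|\sigma_\psi(f,g)|\le |\sigma(W;\tilde\psi)|+n\,|\sigma(W)|\le 2n\beta_1(\Sigma)$. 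Both inequalities follow from a single assertion: the signature of this bundle, taken with any unitary coefficient system $\alpha$, is bounded by $\dim_{\mathbb C}H_1(\Sigma;\alpha|_\Sigma)$, the dimension of the twisted first homology of the fiber. Applying this with $\alpha=\tilde\psi$, where $\dim_{\mathbb C}H_1(\Sigma;\psi)\le n\beta_1(\Sigma)$ because $\Sigma$ is homotopy equivalent to a wedge of $\beta_1(\Sigma)$ circles, and with $\alpha$ trivial, where $\dim H_1(\Sigma;\mathbb R)=\beta_1(\Sigma)$, yields the two displayed bounds.

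To prove that assertion I would run the Leray--Serre spectral sequence for the fibration $\Sigma\to V\to D$ with coefficients in the local system $\alpha$. Because $\Sigma\simeq\bigvee^{\beta_1(\Sigma)}S^1$ and $D\simeq S^1\vee S^1$, the only possibly nonzero entries are $E^2_{p,q}$ with $p,q\in\{0,1\}$, and the differentials into and out of $E^2_{1,1}$ vanish, so $H_2(V;\alpha)\cong E^2_{1,1}=H_1(D;\mathcal H_1)$, where $\mathcal H_1$ is the local system on $D$ with fiber $H_1(\Sigma;\alpha|_\Sigma)$. Here it is useful that the coefficient system $\tilde\psi$ is trivial on the base loops $t,s$ (the presentation computations of Section~\ref{sec:Defsigs} give $\tilde\psi(t)=\tilde\psi(s)=1$), which keeps the base monodromy under control. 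Over the wedge $S^1\vee S^1$ one has $\dim H_1(D;\mathcal H_1)\le 2\dim H_1(\Sigma;\alpha|_\Sigma)$, so $b_2(V;\alpha)\le 2\dim H_1(\Sigma;\alpha|_\Sigma)$.

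The remaining factor of two is recovered from Poincar\'e--Lefschetz duality (``half lives, half dies''): the radical of the intersection form on $H_2(V;\alpha)$ equals $\mathrm{im}\!\left(H_2(\partial V;\alpha)\to H_2(V;\alpha)\right)$, and a dimension count shows this radical occupies at least half of $H_2(V;\alpha)$, so the rank of the form, hence $|\sigma(V;\alpha)|$, is at most $\dim H_1(\Sigma;\alpha|_\Sigma)$. In the infinite‑dimensional ($\ell^{(2)}$) case the same argument runs verbatim with von Neumann dimensions in place of complex dimensions and Atiyah's $L^{(2)}$‑signature theorem in place of the ordinary signature; since $\G$ is infinite the von Neumann dimension of $H_1(\Sigma;\ell^{(2)}(\G))$ is at most $\beta_1(\Sigma)$, producing the stated bound $2\beta_1(\Sigma)$.

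I expect the main obstacle to be the halving step: making the half‑lives‑half‑dies argument precise with twisted and $\ell^{(2)}$ coefficients over the particular boundary $N_f\sqcup N_g\sqcup -N_{fg}$, and verifying that the identification $H_2(V;\alpha)\cong H_1(D;\mathcal H_1)$ survives the passage from $V$ to $W$ through the $2$‑ and $3$‑handle additions of \eqref{eq:plugs} (as in the proof of Proposition~\ref{prop:V=W}) without increasing the rank of the form. One must also handle the disconnected‑boundary case, where $V$ and $W$ genuinely differ, by running the estimate on $W$ directly. The spectral‑sequence computation and the fiber‑homology bounds should be routine; the genuinely delicate point is controlling the radical of the twisted intersection form, so that only $\dim H_1(\Sigma;\alpha|_\Sigma)$, rather than the full $b_2$, enters the final estimate.
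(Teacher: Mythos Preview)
Your overall strategy---bound $|\sigma(W;\tilde\psi)|$ and $|\sigma(W)|$ separately by the relevant rank, then add---matches the paper's, and your bound $\dim_{\mathbb C}H_1(\Sigma;\alpha)\le n\beta_1(\Sigma)$ from the $1$-complex structure of $\Sigma$ is exactly what is used. The difference is in how you obtain the rank bound, and here your ``halving step'' is a genuine gap. The identity $\mathrm{rad}=\mathrm{im}\bigl(H_2(\partial V;\alpha)\to H_2(V;\alpha)\bigr)$ is correct, but there is no general dimension count forcing the radical to occupy at least half of $H_2(V;\alpha)$: for instance, a punctured $\mathbb{CP}^2$ has $b_2=1$ and radical~$0$. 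So the step that recovers the factor of two from $b_2(V;\alpha)\le 2\dim H_1(\Sigma;\alpha)$ is not justified as stated; without it your argument only yields $|\sigma_\psi(f,g)|\le 4n\beta_1(\Sigma)$ (still bounded, hence sufficient for Proposition~\ref{prop:quasihomo} and Corollary~\ref{cor:boundedcocycle}, but not the constant in the theorem).

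The paper avoids the halving issue entirely by working with the decomposition $W\simeq N_f\cup_\Sigma N_g$ and its Mayer--Vietoris sequence
\[
H_2(N_f)\oplus H_2(N_g)\xrightarrow{(i_*+j_*)} H_2(W)\xrightarrow{\partial_*} H_1(\Sigma)\xrightarrow{(i_*,j_*)} H_1(N_f)\oplus H_1(N_g).
\]
Since $N_f$ and $N_g$ are boundary components, the image of $(i_*+j_*)$ lies in the radical, so the signature is bounded by the rank of $H_2(W)/\mathrm{im}(i_*+j_*)$, which injects via $\partial_*$ into $H_1(\Sigma;\alpha)$. This gives $|\sigma(W;\alpha)|\le\dim H_1(\Sigma;\alpha)$ directly, with no halving needed, and the argument runs verbatim with $\mathcal U\Gamma$-coefficients and von Neumann dimension. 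It also treats the disconnected-boundary case uniformly, since the decomposition is of $W$ rather than the bundle $V$. Your spectral-sequence computation of $b_2(V;\alpha)$ is correct but overshoots; the Mayer--Vietoris input from $H_2(N_f)\oplus H_2(N_g)$ is precisely the piece that accounts for the extra factor of two.
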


\begin{corollary}\label{cor:boundedcocycle} For any $\psi$, $\sigma_\psi$ is a bounded $2$-cocycle and hence represents an element in the kernel of
$$
H^2_b(J;\mathbb{R})\to H^2(J;\mathbb{R}).
$$
\end{corollary}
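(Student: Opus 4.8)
The plan is to assemble three facts that are already in hand. First, by Corollary~\ref{cor:cocycle} the cochain $\sigma_\psi$ is a genuine $2$-cocycle on $J=J(H)$, and by Theorem~\ref{thm:sigbounded} its values are uniformly bounded (by $2n\beta_1(\Sigma)$ in the finite-dimensional case and by $2\beta_1(\Sigma)$ in the infinite-dimensional case). Hence $\sigma_\psi$ lies in the subcomplex $C^2_b(J;\mathbb{R})\subset C^2(J;\mathbb{R})$ and is closed there, so it determines a well-defined class $[\sigma_\psi]\in H^2_b(J;\mathbb{R})$. This already establishes the first assertion of the corollary.

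Next I would identify the comparison map $H^2_b(J;\mathbb{R})\to H^2(J;\mathbb{R})$ as the map induced on cohomology by the inclusion of cochain complexes $i^\bullet_\#\colon C^\bullet_b(J;\mathbb{R})\hookrightarrow C^\bullet(J;\mathbb{R})$. Showing that $[\sigma_\psi]$ lies in the kernel of this map is then precisely showing that the image $i^2_\#(\sigma_\psi)$ is an ordinary coboundary in $C^\bullet(J;\mathbb{R})$, where the primitive is allowed to be an arbitrary (not necessarily bounded) $1$-cochain. But Proposition~\ref{prop:cobrhoissigma} supplies exactly such a primitive, namely $i^2_\#(\sigma_\psi)=\delta_{\mathbb{R}}(\rho_\psi)$ with $\rho_\psi\in C^1(J;\mathbb{R})$. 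Therefore the image of $\sigma_\psi$ in ordinary real cohomology is a coboundary, its class in $H^2(J;\mathbb{R})$ vanishes, and consequently $[\sigma_\psi]\in\ker\bigl(H^2_b(J;\mathbb{R})\to H^2(J;\mathbb{R})\bigr)$.

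There is no serious technical obstacle here, since all the real work has been carried out in the cited results; the only subtlety worth emphasizing is conceptual. The primitive $\rho_\psi$ is in general \emph{not} bounded---indeed it is only a quasimorphism---so it does not lie in $C^1_b(J;\mathbb{R})$ and cannot be used to trivialize $[\sigma_\psi]$ inside $H^2_b$. Thus the corollary does not claim that $[\sigma_\psi]=0$ in bounded cohomology; rather it locates the (potentially nonzero) bounded class in the kernel of the comparison map. This is exactly the phenomenon encoded by the exact sequence $\widehat{Q}(J)\overset{\delta}{\to}H^2_b(J;\mathbb{R})\to H^2(J;\mathbb{R})$ recalled in the introduction, under which $\sigma_\psi=\delta\rho_\psi$ appears as the image of the quasimorphism $\rho_\psi$, and it is precisely the failure of $\rho_\psi$ to be bounded that will make these classes interesting (cf.\ Theorem~\ref{thm:infgenbounded}).
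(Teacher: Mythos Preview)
Your proof is correct and follows essentially the same approach as the paper: invoke Corollary~\ref{cor:cocycle} and Theorem~\ref{thm:sigbounded} to get a bounded $2$-cocycle, then observe that its image in ordinary real cohomology vanishes. The only cosmetic difference is that the paper cites Corollary~\ref{cor:sigcohomologyclass} for the vanishing step, whereas you go directly to Proposition~\ref{prop:cobrhoissigma}; your route is arguably cleaner since it handles the finite- and infinite-dimensional cases uniformly, while Corollary~\ref{cor:sigcohomologyclass} is stated only for finite-dimensional $\psi$.
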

\begin{proof}[Proof of Corollary~\ref{cor:boundedcocycle}] By Corollary~\ref{cor:cocycle} and Theorem~\ref{thm:sigbounded}, $\sigma_\psi$ is a bounded $2$-cocycle. By Corollary~\ref{cor:sigcohomologyclass}, it vanishes in $H^2(J;\mathbb{R})$.
\end{proof}

\begin{proof}[Proof of Theorem~\ref{thm:sigbounded}] Recall the description of $W=W(f,g)$ of Figure~\ref{fig:W(f,g)}. By contracting along the thickenings, we see that, up to homotopy equivalence, $W\simeq N_f\cup_\Sigma N_g$. Thus we have the Mayer-Vietoris sequence below, which we consider with various coefficients.
\begin{equation}\label{eq:MV}
H_2(N_f)\oplus H_2(N_g)\overset{(i_*+j_*)}{\longrightarrow} H_2(W)\overset{\partial_*}{\longrightarrow} H_1(\Sigma) \overset{(i_*,j_*)}{\longrightarrow} H_1(N_f)\oplus H_1(N_g).
\end{equation}
Since the intersection form on $W$ with any coefficients is identically zero on $i_*(H_2(\partial W))$, it descends to a form on the quotient module
$$
H_2(W)/i_*(H_2(\partial W))
$$
and our various signatures are equal to the appropriate signature of this induced form. 

First, consider  the case that $\psi$ is an $n$-dimensional representation. By definition
$$
\sigma_{\psi}(f,g)= \sigma\left(W(f,g);\widetilde{\psi}\right)-n~\sigma(W(f,g)).
$$

By our remark above,
$$
\left|\sigma\left(W;\widetilde{\psi}\right)\right|\leq \rk_\mathbb{C}\left(H_2\left(W;\widetilde{\psi}\right)\Big/i_*\left(H_2\left(\partial W;\widetilde{\psi}\right)\right)\right).
$$
Considering ~\eqref{eq:MV} with $\C^n$-coefficients twisted by $\widetilde{\psi}$, we see that
$$
\rk_\mathbb{C}\left(H_2\left(W;\widetilde{\psi}\right)\Big/i_*\left(H_2\left(\partial W;\widetilde{\psi}\right)\right)\right)\leq \rk_\mathbb{C}(\mbox{image\ }\partial_*)\leq \rk_\mathbb{C} H_1\left(\Sigma;\widetilde{\psi}\right).
$$
Since $\Sigma$ has a cell decomposition with one zero cell and $\beta_1(\Sigma)$ one cells
$$
\rk_\mathbb{C} H_1\left(\Sigma;\widetilde{\psi}\right)\leq \rk_\mathbb{C} C_1\left(\Sigma;\widetilde{\psi}\right)= \rk_\mathbb{C} \left(\mathbb{C}^n\otimes_{\widetilde{\psi}}\left(\Z[\pi_1(W)]^{\beta_1(\Sigma)}\right)\right)=n\beta_1(\Sigma).
$$
Hence we have shown that
$$
|\sigma_\psi(f,g)|\leq 2n\beta_1(\Sigma).
$$
Taking $\psi$ to be a trivial $1$-dimensional representation, 
$$
|\sigma(W(f,g))|\leq \beta_1(\Sigma).
$$
This finishes the proof in the case of a finite-dimensional representation.

Now suppose $\psi$ is an infinite-dimensional representation.
Thus $\widetilde{\psi}:\pi_1(W)\to F/H\equiv\Gamma\overset{\ell_r}\to U(\ell^{(2)}(\Gamma))$ and by definition
$$
\sigma_{\psi}(f,g)= \sigma^{(2)}_\G\left(W;\widetilde{\psi}\right)-\sigma(W).
$$
and $\sigma^{(2)}_\G(W)$ is equal to the von Neumann signature of the induced form on
$$
H_2(W;\mathcal{U}\G)/i_*(H_2(\partial W;\mathcal{U}\G)).
$$
Since the von Neumann dimension is additive on short exact sequences (see this and other properties in ~\cite[Lemma 8.27, Assumption 6.2,Theorem 6.7]{Luc}),
$$
|\sigma^{(2)}_\G(W)|\leq \dim^{(2)}_\G\Big(H_2(W;\mathcal{U}\G)/i_*(H_2(\partial W;\mathcal{U}\G))\Big).
$$
Considering the sequence \eqref{eq:MV} with $\mathcal{U}\G$-coefficients, we see that
$$
\dim^{(2)}_\G\Big(H_2(W;\mathcal{U}\G)/i_*(H_2(\partial W;\mathcal{U}\G))\Big)\leq \dim^{(2)}_\G(\mbox{image}\ \partial_*)\leq \dim^{(2)}_\G H_1(\Sigma;\mathcal{U}\G).
$$
Furthermore
$$
\dim^{(2)}_\G H_1(\Sigma;\mathcal{U}\G)\leq \dim^{(2)}_\G C_1(\Sigma;\mathcal{U}\G)=\dim^{(2)}_\G (\mathcal{U}\G)^{\beta_1(\Sigma)}=\beta_1(\Sigma).
$$
Hence we have shown that
$$
|\sigma_\psi(f,g)|\leq 2\beta_1(\Sigma).
$$

\end{proof}

\begin{subsection}{Higher-order $\rho$-invariants as quasimorphisms}\label{subsesec:rhoinvts}\

We show that each of the higher-order $\rho$-invariants is a quasimorphism. In Section~\ref{sec:examples} we will show that even the very simplest family of such higher-order $\rho$-invariants spans an infinite-dimensional subspace of the the vector space, $\widehat{Q}(\mathcal{J}(3))$, of all quasimorphisms of $\mathcal{J}(3)$ (recall $\mathcal{J}(3)$ is the Johnson subgroup $\mathcal{K}$); and that 
the set of their coboundaries, $\{\delta(\rho_\omega)\}$ spans an infinitely generated subspace of $H^2_b(\mathcal{K};\mathbb{R})$, the second bounded cohomology of $\mathcal{K}$.

\begin{proposition} \label{prop:quasihomo} Each of the higher-order $\rho$-invariants, $\rho_\psi:J(H)\to \mathbb{R}$ is a
quasimorphism.
\end{proposition}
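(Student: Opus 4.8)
The plan is to recognize that this proposition is essentially a formal consequence of two facts already in hand: the identity relating $\rho_\psi$ to the signature cocycle (Proposition~\ref{prop:rhoissigdefect}) and the uniform bound on that cocycle (Theorem~\ref{thm:sigbounded}). First I would recall the definition: $\rho_\psi$ is a quasimorphism precisely when its deviation from being a homomorphism, the quantity $\rho_\psi(fg)-\rho_\psi(f)-\rho_\psi(g)$, is bounded uniformly over all $f,g\in J(H)$ by a single constant $D_{\rho_\psi}$.

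The key step is to rewrite this deviation using Proposition~\ref{prop:rhoissigdefect}, which asserts $\sigma_\psi(f,g)=-\rho_\psi(fg)+\rho_\psi(f)+\rho_\psi(g)$. Rearranging gives $\rho_\psi(fg)-\rho_\psi(f)-\rho_\psi(g)=-\sigma_\psi(f,g)$, so that the absolute value of the quasimorphism defect is exactly $|\sigma_\psi(f,g)|$. Thus the problem of bounding the defect is identical to the problem of bounding the signature cocycle, which has already been solved.

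The final step is to invoke Theorem~\ref{thm:sigbounded}, which furnishes precisely such a bound: in the finite-dimensional case $|\sigma_\psi(f,g)|\leq 2n\beta_1(\Sigma)$, where $n=\dim(\mathcal{H})$, and in the infinite-dimensional case $|\sigma_\psi(f,g)|\leq 2\beta_1(\Sigma)$. Since these bounds depend only on $\Sigma$ (and on $n$) and not on the particular pair $f,g$, they serve as the required constant $D_{\rho_\psi}$, and the quasimorphism inequality follows immediately.

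I do not expect any genuine obstacle in this step: all of the substantive content lives in Theorem~\ref{thm:sigbounded}, whose proof rested on the Mayer--Vietoris estimate bounding the rank (respectively the von Neumann dimension) of $H_2$ of $W(f,g)$ modulo the image of $H_2$ of its boundary by $\beta_1(\Sigma)$-worth of chains. Granting that theorem, the present proposition reduces entirely to the algebraic manipulation above, so the proof is a short combination of the two cited results rather than an independent argument.
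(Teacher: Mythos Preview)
Your proposal is correct and follows essentially the same approach as the paper: you use Proposition~\ref{prop:rhoissigdefect} to identify the quasimorphism defect with $|\sigma_\psi(f,g)|$ and then invoke Theorem~\ref{thm:sigbounded} for the uniform bound. The paper's proof is precisely this two-line argument, so there is nothing to add.
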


\begin{proof} Suppose $\rho=\rho_\psi:J(H)\to \mathbb{R}$ is a higher-order $\rho$-invariant. Then, by Proposition~\ref{prop:rhoissigdefect}, for each $f,g$
$$
|~\rho(fg)-\rho(f)-\rho(g)~|=|\sigma_\psi(f,g)~|.
$$
where $\sigma$ is the signature cocycle from Section~\ref{sec:Defsigs}. By Theorem~\ref{thm:sigbounded}, the latter is bounded independent of $f$ and $g$.
\end{proof}

\end{subsection}

\begin{subsection}{Subgroups on which the Higher-Order Signature Cocycles Vanish}\label{subsesec:rhohomo}\
\\

By examining the proof of Theorem~\ref{thm:sigbounded} we can draw more precise conclusions in certain cases.

\begin{definition}\label{def:C(H)} Let $C(H)\lhd J(H)$ denote the subgroup consisting of those classes $[f]$ such that
\begin{itemize}
\item [1.] $f$ induces the identity map on $H/[H,H]$; and
\item [2.] the homotopy classes $[f(\delta_i)\overline{\delta}_i]$ lie in $[H,H]$ for $1\leq i\leq m$ (compare Subsection~\ref{subsec:J(H)}).
\end{itemize}
\end{definition}

\begin{theorem}\label{thm:additivity}\
\begin{itemize}
\item [1.]If either of $f$ or $g$ lies in $C(H)$ then the twisted signature of $W(f,g)$ vanishes, i.e. in the finite case $\sigma(W(f,g);\widetilde{\psi})= 0$; and in the infinite case $\sigma_\G^{(2)}(W(f,g),\psi)=0$.
\item [2.] If either of $f$ or $g$ lies in $\mathcal{I}$ then the ordinary signature of $W(f,g)$ vanishes.
\end{itemize}
\end{theorem}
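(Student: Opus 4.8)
The plan is to run the Mayer--Vietoris analysis already set up in the proof of Theorem~\ref{thm:sigbounded} and to show that, under each hypothesis, the relevant (twisted or ordinary) intersection form is supported on the zero module, so that its signature vanishes for trivial reasons. Recall that $W=W(f,g)\simeq N_f\cup_\Sigma N_g$ and that each of our signatures equals the signature of the form induced on $H_2(W;\text{coeff})/i_*\big(H_2(\partial W;\text{coeff})\big)$. By the exact sequence~\eqref{eq:MV} this quotient is itself a quotient of $\operatorname{image}(\partial_*)=\ker\big((i_*,j_*)\colon H_1(\Sigma;\text{coeff})\to H_1(N_f;\text{coeff})\oplus H_1(N_g;\text{coeff})\big)$. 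Hence it suffices to show $(i_*,j_*)$ is injective, and for that it is enough that one of $i_*,j_*$ be injective. So in Part~1, assuming say $f\in C(H)$, I would prove $i_*\colon H_1(\Sigma;\widetilde{\psi})\to H_1(N_f;\widetilde{\psi})$ is injective; in Part~2, assuming $f\in\mathcal{I}$, that $i_*\colon H_1(\Sigma;\mathbb{C})\to H_1(N_f;\mathbb{C})$ is injective (and symmetrically with $j_*$ if instead $g$ satisfies the hypothesis).

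First I would translate each group-theoretic hypothesis into a statement about the monodromy on the first homology of the fiber. Since $\Sigma$ is a $K(F,1)$, I identify $H_1(\Sigma;\mathbb{Z}[F/H])$ with the first homology of the regular $H$-cover, that is with $H/[H,H]$ as a $\mathbb{Z}[F/H]$-module. Condition~$1$ of Definition~\ref{def:C(H)} says exactly that $f$ acts as the identity on this module, so the monodromy of the mapping torus acts trivially on $H_1(\Sigma;\mathbb{Z}[F/H])$, while condition~$2$ supplies the correct compatibility along the arcs $\delta_i$ when $\partial\Sigma$ is disconnected; specializing by $\psi$ then governs the action on $H_1(\Sigma;\widetilde{\psi})$. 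In Part~2 the analogous fact is immediate and holds on the nose: $f\in\mathcal{I}$ means $f_*=\operatorname{id}$ on $H_1(\Sigma;\mathbb{Z})$, hence on $H_1(\Sigma;\mathbb{C})$.

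Next I would invoke the Wang sequence of the fibration $\Sigma\hookrightarrow M_f\to S^1$ with the appropriate local system, on which the generator $t$ acts trivially since $t=1$ in $\pi_1(N_f)$, giving
$$
H_1(\Sigma;\text{coeff})\xrightarrow{\,f_*-\operatorname{id}\,}H_1(\Sigma;\text{coeff})\xrightarrow{\,\iota_*\,}H_1(M_f;\text{coeff}).
$$
Triviality of the monodromy forces $f_*-\operatorname{id}=0$, so $\iota_*$ is injective. I would then push this through the longitudinal Dehn filling $M_f\leadsto N_f$: the capping $2$-cells are attached along $t$ and the $t_i$, which are transverse to the fiber and carry trivial holonomy, so the only first-homology classes killed in passing from $M_f$ to $N_f$ lie in the $S^1$-direction and meet the image of $H_1(\Sigma;\text{coeff})$ trivially; a second Mayer--Vietoris for $N_f=M_f\cup(\text{solid tori})$ makes this precise. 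Combining, $i_*$ is injective, hence $(i_*,j_*)$ is injective, $\operatorname{image}(\partial_*)=0$, the quotient module vanishes, and both signatures are zero.

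The main obstacle is the twisted case (Part~1). Condition~$1$ of $C(H)$ controls the monodromy on the universal module $H_1(\Sigma;\mathbb{Z}[F/H])=H/[H,H]$ exactly, but the passage to $H_1(\Sigma;\widetilde{\psi})$, and the preservation of injectivity under the Dehn filling, involves a change of rings and is not purely formal, since triviality on the associated graded pieces of the Lyndon--Hochschild--Serre filtration need not give triviality on the nose. I would handle this by carrying out the Wang and Dehn-filling steps first over $\mathbb{Z}[F/H]$, where the hypothesis yields exact vanishing of $f_*-\operatorname{id}$ and clean injectivity of $\Sigma\hookrightarrow N_f$, and only then specialize to $\widetilde{\psi}$ (respectively pass to the $\ell^{(2)}$-completion in the infinite-dimensional case, where von Neumann dimension is additive and the identical rank count applies, yielding $\sigma^{(2)}_\G(W(f,g);\widetilde{\psi})=0$). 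Part~2 is unobstructed because $f_*=\operatorname{id}$ holds literally on $H_1(\Sigma;\mathbb{C})$. Should the specialization prove awkward, the signature-zero conclusion can instead be reached by exhibiting $\operatorname{image}(\partial_*)$ as a metabolizer for the induced form via Wall non-additivity, the twisted-coefficient generalization of the normalization $\tau_M(\operatorname{id},\cdot)=0$ of the Meyer cocycle.
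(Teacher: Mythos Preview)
Your overall architecture matches the paper's: use the Mayer--Vietoris sequence~\eqref{eq:MV}, reduce to showing that one of $i_*,j_*$ is injective on $H_1(\Sigma;\text{coeff})$, establish this via the Wang sequence for $M_f$ together with an analysis of the Dehn filling $M_f\leadsto N_f$, and observe that Part~2 is the special case $H=F$, $\psi$ trivial. You also correctly locate the one genuine difficulty: the hypothesis $f\in C(H)$ gives $f_*=\mathrm{id}$ on $H_1(\Sigma;\mathbb{Z}[F/H])=H/[H,H]$, but the Wang-sequence step needs $f_*=\mathrm{id}$ on $H_1(\Sigma;\widetilde{\psi})$, and this passage is not formal.

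However, your proposed resolution of that difficulty does not work. You suggest running the Wang and Dehn-filling arguments over $\mathbb{Z}[F/H]$ to obtain injectivity of $H_1(\Sigma;\mathbb{Z}[F/H])\to H_1(N_f;\mathbb{Z}[F/H])$, and ``only then specialize to $\widetilde{\psi}$.'' But tensoring with $\mathbb{C}^n$ over $\mathbb{Z}[F/H]$ is only right exact, so injectivity of a module map need not survive; and homology does not commute with $-\otimes_{\mathbb{Z}[F/H]}\mathbb{C}^n$, so you cannot recover the $\psi$-coefficient Mayer--Vietoris conclusion from the $\mathbb{Z}[F/H]$-coefficient one. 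Your fallback via Wall non-additivity is too vague to repair this.

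The paper closes the gap differently: it proves directly that $f_*=\mathrm{id}$ on $H_1(\Sigma;\mathbb{Z}[F/H])$ implies $f_*=\mathrm{id}$ on $H_1(\Sigma;\widetilde{\psi})$, using the fact that $\Sigma$ is a $1$-complex. Writing $D_*=\mathbb{C}[F/H]\otimes_{\mathbb{Z}F}C_*(\widetilde{\Sigma})$, one has $D_2=0$, so the natural map $\mathbb{C}^n\otimes_{\mathbb{C}[F/H]}H_1(D_*)\to H_1(\mathbb{C}^n\otimes_{\mathbb{C}[F/H]}D_*)=H_1(\Sigma;\widetilde{\psi})$ is \emph{surjective} (right exactness of $\otimes$ applied to $0\to\ker\partial_1\to D_1\to\mathrm{im}\,\partial_1\to 0$). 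Surjectivity, together with $f_*=\mathrm{id}$ on $H_1(D_*)$, forces $f_*=\mathrm{id}$ on $H_1(\Sigma;\widetilde{\psi})$. With this in hand the Wang sequence and the Dehn-filling step go through directly in $\psi$-coefficients; the paper also makes explicit use of condition~$2$ of Definition~\ref{def:C(H)} (the arcs $\delta_i$) to show that the classes killed by the filling $2$-cells all reduce to the single class $t$, which meets the image of $H_1(\Sigma)$ trivially via the fibration $M_f\to S^1$. That is the missing ingredient in your sketch.
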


Before proving Theorem~\ref{thm:additivity}, we point out some of its interesting corollaries.

\begin{corollary}\label{cor:sigmavanishes} The signature defect $\sigma_\psi$ vanishes identically as a $2$-cocycle on $\mathcal{C}(H)\cap \I$.
\end{corollary}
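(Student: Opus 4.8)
The plan is to obtain this as an immediate consequence of Theorem~\ref{thm:additivity} combined with the defining formula for $\sigma_\psi$ in Definition~\ref{defcocycle}. To show that $\sigma_\psi$ vanishes identically as a $2$-cocycle on $C(H)\cap\I$, I must verify that $\sigma_\psi(f,g)=0$ for every pair $f,g\in C(H)\cap\I$.

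First I would observe that if $f,g\in C(H)\cap\I$ then in particular $f$ lies in both $C(H)$ and $\I$, so both hypotheses of Theorem~\ref{thm:additivity} are satisfied (the theorem only requires that \emph{either} $f$ or $g$ qualify, and here both do). Applying part~1 of that theorem, the twisted signature of $W(f,g)$ vanishes: $\sigma(W(f,g);\widetilde{\psi})=0$ in the finite-dimensional case and $\sigma^{(2)}_\G(W(f,g),\psi)=0$ in the infinite-dimensional case. Applying part~2, since $f\in\I$, the ordinary signature $\sigma(W(f,g))=0$.

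Substituting these two vanishing results into Definition~\ref{defcocycle} gives the conclusion directly. In the finite-dimensional case,
$$
\sigma_\psi(f,g)=\sigma(W(f,g);\widetilde{\psi})-n\,\sigma(W(f,g))=0-n\cdot 0=0,
$$
and in the infinite-dimensional case,
$$
\sigma_\psi(f,g)=\sigma^{(2)}_\G(W(f,g);\widetilde{\psi})-\sigma(W(f,g))=0-0=0.
$$
Since $f,g\in C(H)\cap\I$ were arbitrary, $\sigma_\psi\equiv 0$ on $C(H)\cap\I$.

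There is essentially no obstacle at this final stage: all the geometric content---the vanishing of both the twisted and the untwisted signatures of $W(f,g)$ when one factor lies in $C(H)$ or in $\I$---has already been isolated in Theorem~\ref{thm:additivity}, and the only remaining point is the purely formal substitution above, together with the observation that membership in $C(H)\cap\I$ supplies both hypotheses simultaneously. The real work is therefore deferred to the proof of Theorem~\ref{thm:additivity}, which I expect to analyze the twisted and untwisted intersection forms on $H_2(W(f,g))$ through the Mayer--Vietoris decomposition $W\simeq N_f\cup_\Sigma N_g$ already exploited in the proof of Theorem~\ref{thm:sigbounded}.
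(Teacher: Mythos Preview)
Your proof is correct and follows essentially the same approach as the paper: both invoke Theorem~\ref{thm:additivity} to kill the twisted and untwisted signatures separately (using $f\in C(H)$ for part~1 and $f\in\I$ for part~2), then substitute into Definition~\ref{defcocycle}. Your write-up is in fact slightly more explicit than the paper's in separating the finite- and infinite-dimensional cases.
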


\begin{proof}[Proof of Corollary~\ref{cor:sigmavanishes}] Recall that if dim($\mathcal{H})=n$, then
$$
\sigma_{\psi}(f,g)= \sigma(W(f,g);\widetilde{\psi})-n\sigma(W(f,g))
$$
and, in case dim($\mathcal{H})=\infty$
$$
\sigma_{\psi}(f,g)= \sigma^{(2)}_\G(W(f,g);\widetilde{\psi})-\sigma(W(f,g)).
$$
If $f\in C(H)$ then, by Theorem~\ref{thm:additivity}, the twisted signature $\sigma(W(f,g);\widetilde{\psi})=0$ or $\sigma^{(2)}_\G(W(f,g);\widetilde{\psi})=0$ as the case may be. If $f\in \mathcal{I}$ then by Theorem~\ref{thm:additivity}, $\sigma(W(f,g))=0$ (Meyer's cocycle vanishes). Thus, if $f\in \mathcal{C}(H)\cap \I$ then $\sigma_{\psi}(f,g)=0$.
\end{proof}

Then, as an immediate consequence of Corollary~\ref{cor:sigmavanishes}, and Proposition~\ref{prop:rhoissigdefect},

\begin{corollary}\label{cor:rhoishomo} The restriction of $\rho_\psi$ to any subgroup of $\mathcal{C}(H)\cap \I$ is a homomorphism.
\end{corollary}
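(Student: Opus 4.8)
The plan is to deduce the homomorphism property directly from the coboundary relation of Proposition~\ref{prop:rhoissigdefect} together with the vanishing result of Corollary~\ref{cor:sigmavanishes}. All of the genuine geometric content has already been packaged into those statements (and ultimately into Theorem~\ref{thm:additivity}), so what remains is a short formal deduction.

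First I would recall the identity from Proposition~\ref{prop:rhoissigdefect},
$$
\sigma_\psi(f,g) = -\rho_\psi(fg) + \rho_\psi(f) + \rho_\psi(g),
$$
and rewrite it as
$$
\rho_\psi(fg) = \rho_\psi(f) + \rho_\psi(g) - \sigma_\psi(f,g).
$$
Now let $G'$ be any subgroup of $C(H)\cap\I$ and take $f,g\in G'$. Since $f\in C(H)\cap\I$, Corollary~\ref{cor:sigmavanishes} gives $\sigma_\psi(f,g)=0$, and substituting into the displayed identity yields
$$
\rho_\psi(fg)=\rho_\psi(f)+\rho_\psi(g).
$$
Because $G'$ is a subgroup we have $fg\in G'$, and $\rho_\psi$ is defined on all of $J(H)\supseteq G'$, so every term above is the corresponding value of the restriction $\rho_\psi|_{G'}$; hence $\rho_\psi|_{G'}$ is a homomorphism.

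I do not expect any real obstacle at this stage: the coboundary identity reduces the homomorphism condition \emph{exactly} to the vanishing of $\sigma_\psi$, and the latter is supplied verbatim by Corollary~\ref{cor:sigmavanishes}, which in fact needs only that \emph{one} of the two arguments lie in $C(H)\cap\I$. The entire difficulty of the result was already met in proving Theorem~\ref{thm:additivity}, where the twisted and untwisted signatures of $W(f,g)$ are shown to vanish when an argument lies in $C(H)$ or in $\I$, respectively.
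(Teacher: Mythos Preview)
Your proof is correct and follows exactly the paper's approach: the paper states this corollary as ``an immediate consequence of Corollary~\ref{cor:sigmavanishes} and Proposition~\ref{prop:rhoissigdefect},'' which is precisely the deduction you have written out.
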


\begin{proof}[Proof of Theorem~\ref{thm:additivity}] First note that part $2$ of Theorem~\ref{thm:additivity} is actually a special case of part $1$. For taking $H=F$,
note that $C(F)=\mathcal{I}$ so it will follow from part $1$ that the signature twisted by $\psi$ is zero. But in this case $F/H=0$ so the representation $\psi$ is necessarily trivial so the twisted signature is equal to the ordinary signature. Thus we need only show part $1$.

First we show that the condition that $f$ induces the identity map on $H/[H,H]$ is identical to the condition that it induces the identity on $H_1(\Sigma;\mathbb{Z}[F/H])$. Recall that, whenever an epimorphism $\phi:\pi_1(\Sigma)\to \pi_1(\Sigma)/H$ induces a coefficient system, the homology module $H_1(\Sigma;\mathbb{Z}[F/H])$ can be identified with the equivariant homology, that is the homology of the regular $F/H$-covering space of $\Sigma$ corresponding to the kernel of $\phi$, viewed as a module over $\Z[F/H]$. Since this covering space has $\pi_1$ equal to $H$, we have an identification
$$
H_1(\Sigma;\mathbb{Z}[F/H])\cong \frac{\ker\phi}{[\ker\phi,\ker\phi]}=\frac{H}{[H,H]}.
$$
(This also follows from Shapiro's lemma ~\cite[p.73]{BR}.) Hence $f$ induces the identity map on $H_1(\Sigma;\mathbb{Z}[F/H])$ if and only if it induces the identity map on $H/[H,H]$.

We now consider the proof of part $1$ of the theorem in the finite-dimensional case. The proof of part $1$ in the $\ell^{(2)}$ case is identical, with $\mathcal{U}\G$-coefficients replacing $\C^n_\psi$-coefficients.

We show that if $f$ induces the identity map on $H_1(\Sigma;\mathbb{Z}[F/H])$ then it induces the identity on $H_1(\Sigma;\psi)$. Let $\widetilde{\Sigma}$ denote the universal cover of $\Sigma$. Then, by definition,
$$
H_1(\Sigma;\psi)=H_1(\C^n\otimes_{\Z F}C_*(\widetilde{\Sigma})).
$$
But since the coefficient system factors through $F/H$ we have
$$
H_1(\Sigma;\psi)\cong H_1\left(\C^n\otimes_{\C[F/H]}\left(\C[F/H]\otimes_{\Z F}C_*(\widetilde{\Sigma})\right)\right)= H_1\left(D_*\otimes_{\C[F/H]}\C^n\right),
$$
where $D_*= \C[F/H]\otimes_{\Z F}C_*(\widetilde{\Sigma})$. Note that, by definition,
$$
H_1(D_*)= H_1(\Sigma;\C[F/H]).
$$
Now consider the commutative diagram below. We claim that the map $(id\otimes i)_*$ in the upper row is surjective.
$$
\begin{diagram}\dgARROWLENGTH=3.5em
\node{\C^n\otimes H_1\left(D_*\right)}\arrow{s,r}{id\otimes f_*}\arrow{e,t}{(id\otimes i)_*}\node{H_1(\C^n\otimes D_*)}\arrow{e,t}{\cong}\node{H_1(\Sigma;\psi)}\arrow{s,r}{f_*}\\
\node{\C^n\otimes H_1\left(D_*\right)}\arrow{e,t}{(id\otimes i)_*}\node{H_1(\C^n\otimes D_*)}\arrow{e,t}{\cong}\node{H_1(\Sigma;\psi)}
\end{diagram}
$$
Once having shown this claim, our hypothesis that $f_*$ induces the identity on $H_1(D_*)$  implies that the left-hand vertical map, $id\otimes f_*$, in the diagram  is the identity, and hence that the right-hand vertical map, $f_*$, is the identity on $H_1(\Sigma;\psi)$. To show that $(id\otimes i)_*$ is surjective, we may assume that $\Sigma$ is a complex with one zero cell and a number of $1$-cells. Lift this to an equivariant cell structure for $\widetilde{\Sigma}$. Thus $D_2=0$. Consider $\partial_1:D_1\to D_0$. Then there is an exact sequence
$$
0\to H_1(D_*)=\ker \partial_1\overset{i}{\longrightarrow}D_1\overset{\partial_1}{\longrightarrow} \text{image}~\partial_1\to 0.
$$
Since tensoring with $\C^n$ over $\C[F/H]$ is right exact, we have an exact sequence
$$
\C^n\otimes H_1(D_*) \overset{i\otimes id}{\longrightarrow}\C^n\otimes D_1\overset{id\otimes \partial_1}{\longrightarrow}\text{im}~\partial_1\otimes \C^n.
$$
Since $D_2=0$, $H_1(\C^n\otimes D_*)=\ker (id\otimes \partial_1)$. Thus
$$
\C^n\otimes H_1(D_*) \overset{(id\otimes i)_*}{\longrightarrow}H_1(\C^n \otimes D_*)
$$
is surjective. This completes the proof that $f$ induces the identity on $H_1(\Sigma;\psi)$.

Next we show that if $f$ induces the identity on $H_1(\Sigma;\psi)$ then the twisted signature $\sigma(W(f,g),\psi)$ vanishes. Following the proof of Theorem~\ref{thm:sigbounded}, we see that, in order to show that $\sigma(W(f,g),\psi)$ vanishes, it suffices to show that
$$
\rk_{\C}(\mbox{image\ }\partial_*)=0
$$
where $\partial_*$ is from the Mayer-Vietoris sequence \eqref{eq:MV} using $\C^n$-coefficients. Therefore it is sufficient to show that the composition
\begin{equation}\label{eq:mono}
H_1(\Sigma;\psi) \overset{i_*}{\longrightarrow} H_1(M_f;\psi)\overset{j_*}{\longrightarrow} H_1(N_f;\psi)
\end{equation}
is injective. There exists a Wang exact sequence for twisted homology (arising from the Serre spectral sequence for the twisted homology of the fibration $M_f\to S^1$)
$$
H_1(\Sigma;\psi)\overset{f_*-\text{id}}{\longrightarrow}
H_1(\Sigma;\psi)\overset{i_*}{\longrightarrow} H_1(M_f;\psi),
$$
which, since $f$ induces the identity on $H_1(\Sigma;\psi)$, shows that $i_*$ is a monomorphism.

Recall that $N_f$ is obtained from $M_f$ by adjoining a disjoint union of solid tori, $\partial\Sigma\times D^2$, along a disjoint union of tori, $\partial\Sigma\times S^1$. Since a solid torus is obtained from its boundary by adjoining a single $2$-handle and then a $3$-handle, $N_f$ is obtained from $M_f$ by adding a number of $2$-handles and then a number of $3$-handles. Let $\overline{N}_f$ denote the union of $M_f$ and these $2$-cells. We will show that the kernel of
$$
H_1(M_f;\psi)\overset{j_*}{\longrightarrow} H_1(\overline{N}_f;\psi)
$$
is $H_1(S^1;\psi)$ where $S^1=t=*\times S^1$. Consider the exact sequence:
\begin{equation}\label{eq:MV1}
H_1(\sqcup S^1;\psi) \overset{k_*}{\longrightarrow} H_1(M_f;\psi)\overset{j_*}{\longrightarrow} H_1(\overline{N}_f;\psi).
\end{equation}
Since $N_f$ is obtained from $M_f$  by adding two-cells along $\{t,t_1,\dots,t_m\}$, these circles constitute the $\sqcup S^1$ in the exact sequence. Note that the coefficient system is trivial on this subspace. Therefore the loops (based at $*$) $\{t,\delta_i t_i\overline{\delta}_i\}$ represent the images of the generators of $H_1(\sqcup S^1;\psi)$ under $k_*$. Recall from Subsection~\ref{subsection:3manifolds} that there are based homotopies
$$
t\sim \delta_i t_i f(\overline{\delta}_i)\sim (\delta_i t_i\overline{\delta}_i)\delta_i f(\overline{\delta}_i)\sim (\delta_i t_i \overline{\delta}_i)h_i.
$$
where, by the second hypothesis of Definition~\ref{def:C(H)}, $(f(\delta_i)\overline{\delta}_i)^{-1}=h_i$ for some $h_i\in [H,H]$. Further note that any element of $[H,H]$ represents the zero element in $H_1(M_f;\Z[F/H])$. Thus the image of $k_*$ (hence the kernel of $j_*$) is generated by the image of $t$. Now, to finish the proof that $j_*$ of sequence \eqref{eq:mono} is injective, we need only show that the image of $i_*$ from sequence \eqref{eq:mono} has trivial intersection with the image of $k_*$ ($H_1(S^1;\psi)=<t>$). Suppose that $\alpha$ is a class in the intersection. If $\pi:M_f\to S^1$ is the fibration then
$$
H_1(S^1;\psi)\overset{k_*}{\longrightarrow} H_1(M_f;\psi)\overset{\pi_*}{\longrightarrow}H_1(S^1;\psi)
$$
is the identity. Hence $\pi_*(\alpha)=\alpha$. But clearly the map
$$
H_1(\Sigma;\psi) \overset{i_*}{\longrightarrow} H_1(M_f;\psi)\overset{\pi_*}{\longrightarrow} H_1(S^1;\psi)
$$
is the zero map. Thus $\alpha=\pi_*(\alpha)=0$ as claimed. This concludes the proof of Theorem~\ref{thm:additivity}.

\end{proof}

As a further consequence of Theorem~\ref{thm:additivity}, we derive an exact sequence that generalizes the exact sequence \eqref{eq:standardexact}. Since $H$ is characteristic, any $f\in \mathcal{M}$ induces a group automorphism
$$
f_*:\frac{H}{[H,H]}\to \frac{H}{[H,H]}.
$$
But the abelian group $H/[H,H]$ may be endowed with the structure of a right (or left) $\Z[F/H]$-module via the action of $F$ on $H$ by conjugation. This module, as we observed in the first paragraph of the proof of Theorem~\ref{thm:additivity}, may be identified with the twisted homology module $H_1(\Sigma;\Z[F/H])$. If $f\in J(H)$ then $f_*$ is a \emph{module} automorphism since, for any $w\in F$ and any $h\in H$, there exists some $k\in H$ such that $f(w)=wk$. Hence
$$
f(w_*h)= f(w^{-1}hw)=f(w)^{-1}f(h)f(w)= k^{-1}w^{-1}f(h)wk\equiv w^{-1}f(h)w= w_*f(h),
$$
where the $\equiv$ means modulo $[H,H]$. Moreover, since $f$ is an orientation-preserving homeomorphism, $f_*$ is not an \emph{arbitrary} automorphism. There exists an $\Z[F/H]$-valued intersection form
$$
\lambda_H: H_1(\Sigma;\Z[F/H])\times H_1(\Sigma;\Z[F/H])\to \Z[F/H]
$$
which $f_*$ preserves ~\cite{Mi4}. Let $\mathrm{Isom}\left(H_1(\Sigma;\mathbb{Z}[F/H])\right)$ denote the group of module automorphisms of $H_1(\Sigma;\mathbb{Z}[F/H])$ that preserve $\lambda_H$.

\begin{theorem}\label{thm:new exactsequence} If $\Sigma$ has one boundary component then there is an exact sequence
\begin{eqnarray}\label{eq:nonst2}
1\to C(H)\overset{i}\longrightarrow J(H)\overset{r_\psi}\longrightarrow \mathbb{I}\mathrm{som}\left(H_1(\Sigma;\mathbb{Z}[F/H])\right),
\end{eqnarray}
and a $2$-cocycle $\tau_\psi$ on the image of $r_\psi$ such that
$$
\sigma_\psi=r_\psi^*(\tau_\psi)-n\sigma_M,
$$
if dim($\mathcal{H})=n$, and if dim($\mathcal{H})=\infty$,
$$
\sigma_\psi=r_\psi^*(\tau_\psi)-\sigma_M,
$$
where $\sigma_M$ is Meyer's cocycle restricted to $J(H)$.
\end{theorem}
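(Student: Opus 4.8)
The plan is to prove two essentially independent assertions: exactness of \eqref{eq:nonst2} at $C(H)$ and at $J(H)$, and the cochain identity $\sigma_\psi = r_\psi^*(\tau_\psi) - n\sigma_M$. The exact sequence comes almost for free from the setup preceding the theorem. The map $r_\psi$ sends $f$ to the induced automorphism $f_*$ of $H_1(\Sigma;\Z[F/H])\cong H/[H,H]$, and that discussion already verifies that, for $f\in J(H)$, $f_*$ is a $\Z[F/H]$-module automorphism preserving the intersection form $\lambda_H$; hence $r_\psi$ is a well-defined homomorphism into $\mathbb{I}\mathrm{som}(H_1(\Sigma;\Z[F/H]))$. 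Because $\Sigma$ has a single boundary component, condition $[2.]$ of Definition~\ref{def:C(H)} is vacuous, so $C(H)=\{f\in J(H):f_*=\mathrm{id}\text{ on }H/[H,H]\}=\ker r_\psi$. This yields exactness at $C(H)$ (injectivity of $i$) and at $J(H)$ at once; no surjectivity is claimed.

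For the main identity I would first reduce the ordinary part to Meyer's cocycle. By Proposition~\ref{prop:V=W} and Corollary~\ref{cor:V=W}, $\sigma(W(f,g))=\sigma(V(f,g))$ is the signature of the total space of the $\Sigma$-bundle over the twice-punctured disk with monodromies $f$ and $g$, which by the characterization of Meyer's cocycle (property $2$) is precisely $\sigma_M(f,g)$. Writing $S(f,g)$ for the twisted signature $\sigma(W(f,g);\widetilde{\psi})$ (respectively $\sigma^{(2)}_\G(W(f,g);\widetilde{\psi})$ in the $\ell^{(2)}$ case), Definition~\ref{defcocycle} becomes $\sigma_\psi = S - n\sigma_M$. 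Since $\sigma_\psi$ (Corollary~\ref{cor:cocycle}) and $\sigma_M$ are both $2$-cocycles on $J(H)$, so is $S=\sigma_\psi+n\sigma_M$.

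The heart of the proof is to show that $S$ factors through $r_\psi\times r_\psi$. By Theorem~\ref{thm:additivity}(1) we have $S(f,g)=0$ whenever $f\in C(H)$ or $g\in C(H)$. Combining this vanishing with the cocycle identity $\delta S=0$, I would deduce that $S$ is unchanged upon multiplying either argument by any $c\in C(H)$ on either side. Concretely, evaluating $\delta S$ on $(c,f,g)$ and using $S(c,fg)=S(c,f)=0$ gives $S(cf,g)=S(f,g)$; evaluating on $(f,g,c)$ gives $S(f,gc)=S(f,g)$; and evaluating on $(f,c,g)$ gives $S(fc,g)=S(f,cg)$, which collapses to $S(f,g)$ after rewriting $cg=g(g^{-1}cg)$ and invoking both the normality $C(H)\lhd J(H)$ and the second-slot invariance just established. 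As $\ker r_\psi=C(H)$, these combine to give $S(fc,gd)=S(f,g)$ for all $c,d\in C(H)$, so there is a well-defined function $\tau_\psi$ on $\mathrm{image}(r_\psi)$ with $\tau_\psi(r_\psi(f),r_\psi(g))=S(f,g)$, i.e. $r_\psi^*\tau_\psi=S$. Since $r_\psi$ surjects onto its image and $r_\psi^*\tau_\psi=S$ is a cocycle, $\tau_\psi$ is itself a $2$-cocycle, and substituting back yields $\sigma_\psi=S-n\sigma_M=r_\psi^*(\tau_\psi)-n\sigma_M$. The $\ell^{(2)}$ case is verbatim with $n=1$, using the infinite-dimensional half of Theorem~\ref{thm:additivity}.

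I expect the main obstacle to be the clean descent of $S$ in the third paragraph, and in particular the right-multiplication-in-the-first-slot case: there the cocycle identity only directly relates $S(fc,g)$ to $S(f,cg)$, and one must use normality of $C(H)$ to transport the $C(H)$-element into a slot where Theorem~\ref{thm:additivity} applies. A secondary point demanding care is the identification $\sigma(W(f,g))=\sigma_M(f,g)$ for a surface with boundary, where one must ensure that Meyer's cocycle (traditionally defined via the closed-up fiber) coincides with the signature of the bounded-fiber bundle $V(f,g)$; this is exactly the role of Proposition~\ref{prop:V=W} and Corollary~\ref{cor:V=W}.
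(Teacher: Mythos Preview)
Your proposal is correct and follows essentially the same route as the paper's proof. The paper also introduces the twisted-signature cochain $\sigma^t_\psi(f,g)=\sigma(V(f,g);\widetilde{\psi})=\sigma_\psi(f,g)+n\sigma(V(f,g))$ (equal to your $S$ via Proposition~\ref{prop:V=W}), observes it is a cocycle as a sum of two cocycles, and uses Theorem~\ref{thm:additivity} together with the cocycle identity $\delta\sigma^t_\psi(f,g,h)=0$ to show it descends to $J(H)/C(H)\cong\mathrm{image}(r_\psi)$; your treatment of the descent is slightly more explicit (spelling out the $(c,f,g)$, $(f,g,c)$, $(f,c,g)$ instances and invoking normality), but the argument is the same.
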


\begin{remark}\label{rem:seqgeneralizesstand} Note that if $H=F$ then the exact sequence \eqref{eq:nonst2} reduces precisely to the exact sequence \eqref{eq:standardexact}.
\end{remark}

\begin{proof} The sequence is exact almost by definition. Let $\sigma^t_\psi$ denote the twisted signature $2$-cochain on $J(H)$ given by
$$
\sigma_{\psi}^t(f,g)= \sigma(V(f,g);\widetilde{\psi})=\sigma_{\psi}(f,g)+n\sigma(V(f,g))
$$
if dim($\mathcal{H})=n$, and, in case dim($\mathcal{H})=\infty$
$$
\sigma_{\psi}^t(f,g)= \sigma^{(2)}_\G(V(f,g);\widetilde{\psi})=\sigma_{\psi}(f,g)+\sigma(V(f,g)).
$$
Here we have used that $\partial \Sigma$ is connected to apply Proposition~\ref{cor:V=W} and Corollary~\ref{cor:V=W} to employ $V(f,g)$ instead of $W(f,g)$. This $2$-cochain is a $2$-cocycle on $J(H)$ since it is the sum of two $2$-cocycles. We claim that $\sigma^t_\psi$ descends to give a well-defined $2$-cocycle
$$
\widetilde{\sigma^t_\psi}:\frac{J(H)}{C(H)}\times \frac{J(H)}{C(H)}\to G,
$$
(where $G=\Z$ or $G=\R$ according as the representation is finite or infinite-dimensional). For suppose $f,g\in J(H)$ and $h\in C(H)$. Since $\sigma_{\psi}^t$ is a cocyle,
$$
\delta(\sigma^t_\psi)(f,g,h)= \sigma^t_\psi(g,h)+ -\sigma^t_\psi(fg,h)+\sigma^t_\psi(f,gh)- \sigma^t_\psi(f,g)=0.
$$
By Theorem~\ref{thm:additivity}, $\sigma^t_\psi(g,h)=0= \sigma^t_\psi(fg,h)$. Thus
$$
\sigma^t_\psi(f,gh)=\sigma^t_\psi(f,g).
$$
Hence the value of $\sigma^t_\psi$ is independent of the coset representative of $g$ in $J(H)/C(H)$. The same holds for the other variable $f$. Thus $\sigma^t_\psi$ descends to a well-defined $2$-cocycle,  denoted $\tau_\psi$ , on the image of $r_\psi$ such that $r_\psi^*(\tau_\psi)=\sigma^t_\psi$. Moreover, using Proposition~\ref{prop:V=W}, $\sigma(V(f,g))=\sigma(W(f,g))=\sigma_M(f,g)$ (the Meyer cocycle),
$$
r_\psi^*(\tau_\psi)-n\sigma_M=\sigma^t_\psi -n\sigma_M=\sigma_\psi
$$
if dim($\mathcal{H})=n$, whereas, if dim($\mathcal{H})=\infty$, then
$$
r_\psi^*(\tau_\psi)-\sigma_M=\sigma_\psi.
$$
\end{proof}

\end{subsection}

\section{Examples and Calculations}\label{sec:examples}

In this section we perform calculations in one of the simplest non-classical cases in order to exhibit the complexity of the higher-order signature cocycles and $\rho$-invariants. 

Specifically, let $\Sigma=\Sigma_{g,1}$ where $g\geq 2$ and $H=[F,F]$ so $J(H)=\mathcal{I}$. For any norm $1$ complex number $\omega$, we can define a higher-order $\rho$-invariant $\rho_\omega=\rho_{\psi_\omega}$ as follows. Choose $\psi_\omega:F/H\to U(1)$ as the composition
$$
F/H\cong H_1(\Sigma;\mathbb{Z})\cong \Z^{2g}\overset{\pi}{\longrightarrow}S^1\equiv U(1),
$$
where, for each $\pi$ sends every element of a fixed basis to $\omega$. To be more precise, let $x_i$ and $y_i$ be the curves on the surface $\Sigma_{g,1}$ as indicated in Figure~\ref{gens}.  These generate $\pi_1(\Sigma_{g,1},\star)$.

\begin{figure}[htb]
\begin{picture}(268,110)(0,0)
\put(24,82){$x_1$}
\put( 52,104){$y_1$}
\put(178,83){$x_g$}
\put(206,104){$y_g$}
\put(109,-13){$\star$}
\put(109,-5){$\uparrow$}
\put(3,1){\includegraphics{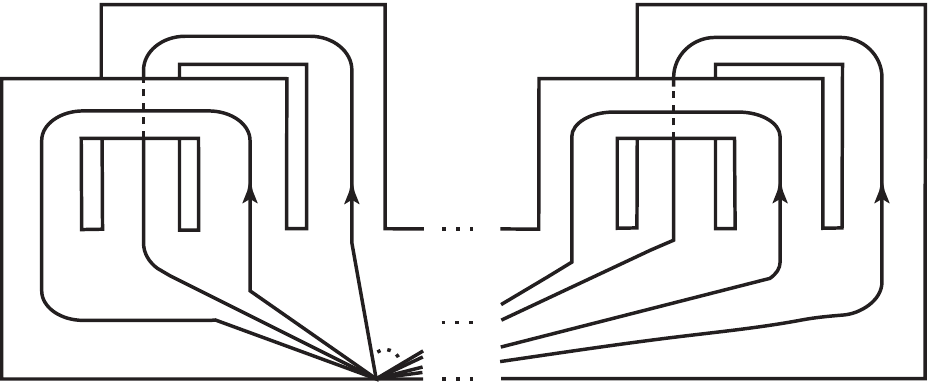}}
\end{picture}
\caption{The curves $x_i$ and $y_i$ generate the fundamental group of the punctured surface $\Sigma_{g,1}$.}\label{gens}
\end{figure}

For each $\omega \in \mathbb{C}$ such that $||\omega||=1$, let $\psi_\omega : H_1(\Sigma_{g,1}) \rightarrow U(1)$ be the representation that sends each $x_j$ and $y_j$ to $\omega$.  Define $\rho_\omega(f) := \rho(f,  \psi_\omega \circ \pi)$ for any $f\in \mathcal{I}(\Sigma_{g,1})$.

We introduce some examples in $\mathcal{K}_g$ on which we can calculate $\rho_\omega$.
For each $m\geq 1$ and $n\geq 0$, let $\alpha$ and $\beta(m,n)$ be the curves on $\Sigma_{g,1}$ as indicated in Figure~\ref{curves} where $2m$ and $2n$ are the number of times $\beta(m,n)$ passes over the ``first $1$-handle'' and ``third $1$-handle'' respectively. (In the figure, if you ignore the ellipses, $n=m=3$.) Even though figure shows a genus $2$ surface, the reader should imagine that the other $2g-4$-handles of $\Sigma$ are adjoined, say, on the left-hand side of the figure. They will play no role in the computations to come, since the homeomorphisms we consider will be supported in the genus two subsurface pictured in Figure~\ref{curves}. Thus  the following computations suffice for any $g\geq 2$. In Proposition~\ref{prop:subsurfacerestrict}, this paradigm (about the equality of the $\rho$-invariants computed from a subsurface with those computed from the super-surface) is formalized. For our convenience, we will often write drop the $m$ and $n$ from the notation and write $\beta$ instead of $\beta(m,n)$.  Let $x=x_1$, $y=y_1$, $z=x_2$, and $w=y_2$. Then, up to conjugation and a choice of orientation, $\alpha$ and $\beta$ represent the homotopy classes $[z,w]$ and $[z^n w^{-1}, x^{-m} y^{-1}] [x^{-1},y]$ respectively. Since $\alpha$ and $\beta$ are bounding curves (either by direct observation or by observing that they are null-homologous simple closed curves), we have that  $D_\alpha, D_{\beta} \in \mathcal{J}(3)=\mathcal{K}_g$ where $D_\alpha$ and $D_{\beta}$ are the Dehn twists about $\alpha$ and $\beta$ respectively.
For each $m\geq 1$, $n\geq 0$ and $N \in \Z$, define $f_{(m,n,N)} := (D_\alpha \circ D_{\beta(m,n)})^{N+1} \in \mathcal{K}_g$.

\begin{figure}[htb]
\begin{picture}(398,293)(0,0)
\put(0,0){\includegraphics[scale=0.45]{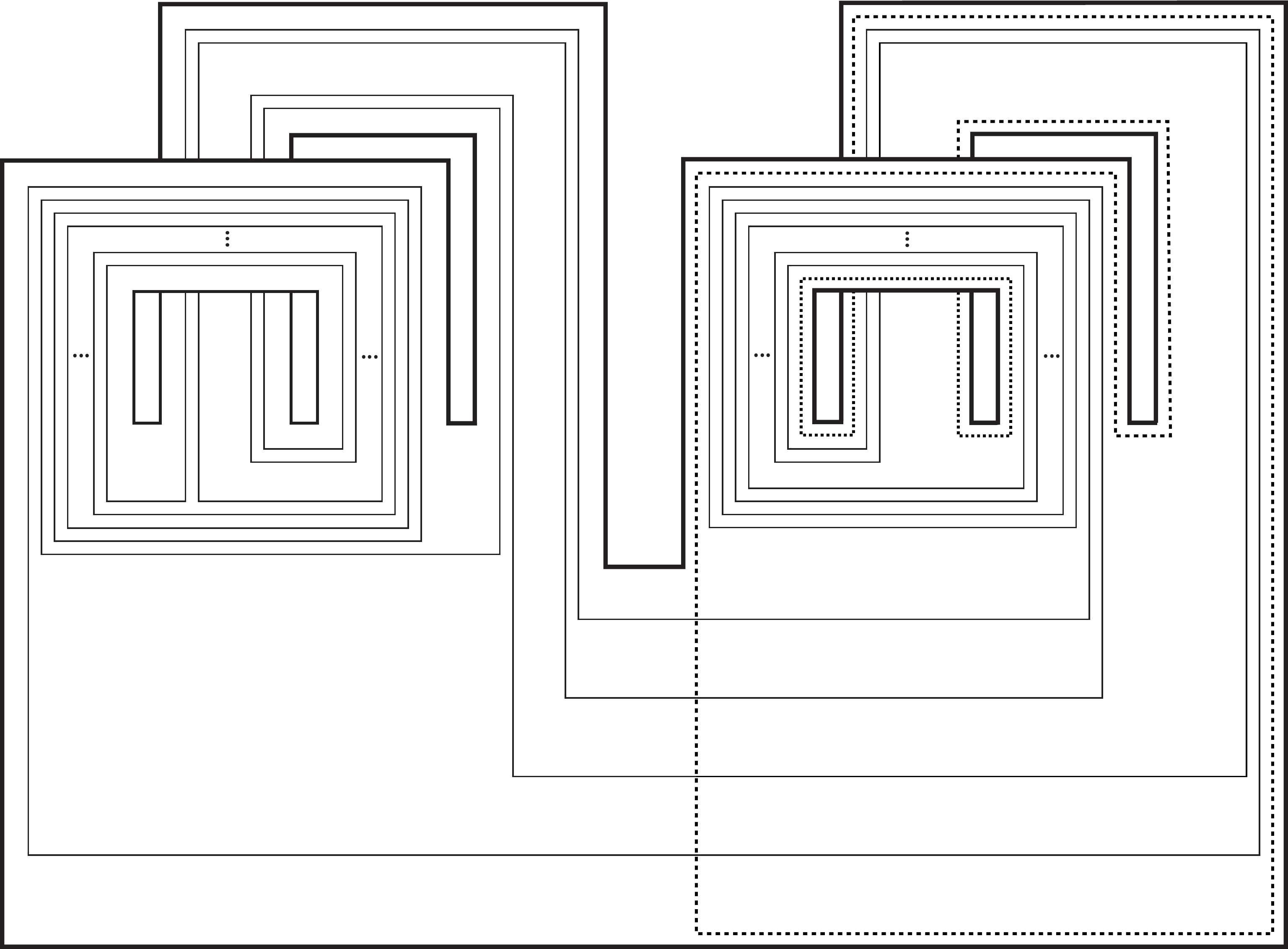}}
\put(219,118){$\alpha$}
\put(197,106){$\beta$}
\end{picture}
\caption{The curves $\alpha$ and $\beta$.}\label{curves}
\end{figure}

\begin{lemma}\label{lem:maincalc} Let $m\geq 1$, $n, N\geq 0$, $G_{(m,n)}(t) = (t^{(n-1)}-1)(t^{-(m+1)}-1)$ and $\omega \in \mathbb{C}$ have norm $1$ with $\omega \neq 1$.  Then  $\rho_\omega(f_{(m,n,N)})+2(N+1)$ is equal to the signature of the $2N \times 2N$ hermitian matrix
\begin{equation} C_{(m,n,N)}(\omega) :=  \left( \begin{array}{cc} A & \overline{G_{(m,n)}(\omega)}B^T  \\ G_{(m,n)}(\omega) B & A\end{array} \right)  \label{int_matrix}
\end{equation}
where
\begin{equation}
A= \left( \begin{array}{cccccc} 2 & -1 & 0 & \cdots & 0 & 0 \\
-1 & 2 & -1 & & \\
0 & -1 & 2 & &  \\
\vdots & & & \ddots &\\
0  &&&& 2 & -1 \\
0 &&&& -1 & 2
\end{array} \right),
\end{equation}
is the $N \times N$ matrix with a $2$ in all the diagonal entries and a $-1$ in all the super- and sub-diagonal entries, and
\begin{equation}
B = \left( \begin{array}{cccccc} -1 & 0 & 0 & \cdots & 0 & 0 \\
1 & -1 & 0 & & \\
0 & 1 & -1 & &  \\
\vdots & & & \ddots &\\
0  &&&& -1 & 0 \\
0 &&&& 1 & -1
\end{array} \right),
\end{equation}
is the $N \times N$ matrix with a $-1$ in all the diagonal entries and a $1$ in all the sub-diagonal entries.\label{thematrix}
\end{lemma}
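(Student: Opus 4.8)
The plan is to realize $N_{f_{(m,n,N)}}$ as the boundary of an explicit Lefschetz fibration and compute the twisted signature defect directly. Since $\alpha$ and $\beta(m,n)$ are null-homologous simple closed curves and $\psi_\omega$ is a representation of $F/[F,F]\cong H_1(\Sigma;\Z)$, the element $f_{(m,n,N)}=(D_\alpha\circ D_{\beta})^{N+1}$ is a composite of $2(N+1)$ positive Dehn twists about null-homologous curves, so Proposition~\ref{prop:Lefschetz} applies and gives
$$
\rho_\omega(f_{(m,n,N)})=\sigma(Y;\psi_\omega)-\sigma(Y),
$$
where $Y$ is the Lefschetz fibration over $D^2$ with fiber $\Sigma$ whose $2(N+1)$ vanishing cycles are the alternating copies $\beta_1,\alpha_1,\dots,\beta_{N+1},\alpha_{N+1}$. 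The whole problem then reduces to computing the twisted and ordinary intersection forms of $Y$, after which the matrix $C_{(m,n,N)}(\omega)$ should emerge as the twisted form and the $+2(N+1)$ as the correction $-\sigma(Y)$.

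Next I would fix the handle decomposition $Y=(\Sigma\times D^2)\cup(h_1\cup\cdots\cup h_{2(N+1)})$, where each $2$-handle $h_i$ is attached along a pushed-in copy of its vanishing cycle with the standard Lefschetz framing. Because $H_2(\Sigma\times D^2)=0$, the module $H_2(Y;\psi_\omega)$ is generated by the Lefschetz thimbles, so the whole intersection form lives on these. I would then extract the block structure from two observations. First, the $N+1$ parallel $\alpha$-thimbles all bound the same curve, so they contribute only $N$ independent closed classes, namely consecutive differences, forming a linear chain whose intersection matrix is the tridiagonal block $A$; the same holds for the $\beta$-thimbles. This simultaneously produces the two diagonal $A$-blocks and the drop in rank from $2(N+1)$ to $2N$. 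Second, the off-diagonal blocks record the $\psi_\omega$-equivariant intersections of the $\alpha$- and $\beta$-classes: the alternating order of the twists in $(D_\alpha D_\beta)^{N+1}$ forces the bidiagonal shift pattern $B$, while the holonomy contributes a single scalar weight $G_{(m,n)}(\omega)$ common to every intersection point.

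The heart of the computation is identifying this scalar. Reading the homotopy classes $\alpha\simeq[z,w]$ and $\beta\simeq[z^n w^{-1},x^{-m}y^{-1}][x^{-1},y]$, one has $\psi_\omega(z^n w^{-1})=\omega^{n-1}$ and $\psi_\omega(x^{-m}y^{-1})=\omega^{-(m+1)}$. A Fox-calculus computation of the equivariant intersection of $\beta$ with $\alpha$ in the $\omega$-cover should then yield exactly
$$
(\omega^{n-1}-1)(\omega^{-(m+1)}-1)=G_{(m,n)}(\omega),
$$
the two factors $(\omega^{\bullet}-1)$ arising from the two commutator entries in the usual way, in direct analogy with the $(1-\omega)$ factors of the Levine--Tristram signature in Example~\ref{ex:easiestJ(H)}. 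Assembling the two observations above exhibits $C_{(m,n,N)}(\omega)$ as the twisted intersection form, nondegenerate of full rank $2N$ precisely when $\omega\neq1$, so that $\sigma(Y;\psi_\omega)=\operatorname{sign} C_{(m,n,N)}(\omega)$. Finally I would compute the ordinary signature $\sigma(Y)$; since $[\alpha]=[\beta]=0$ in $H_1(\Sigma;\Z)$ the untwisted off-diagonal interaction vanishes, reducing $\sigma(Y)$ to a local count over the $2(N+1)$ separating Lefschetz singularities, which supplies the $2(N+1)$ term.

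I expect the main obstacle to be the orientation-sensitive bookkeeping of the two block computations carried out simultaneously: pinning down framings and signs so the diagonal blocks are exactly $A$ rather than $-A$, the shift matrix is exactly $B$, and the weight is $G_{(m,n)}(\omega)$ rather than its conjugate. Most delicately, one must reconcile the rank-$2N$ twisted form valid for $\omega\neq1$ with the ordinary signature so that the additive constant is precisely $2(N+1)$ and not corrupted by the two degenerate directions that collapse as $\omega\to1$ — this degeneration being exactly why the hypothesis $\omega\neq1$ is required.
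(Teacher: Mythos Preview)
Your approach is essentially the paper's own: the Lefschetz fibration $Y$ is exactly the $4$-manifold $W$ constructed there (indeed, the proof of Proposition~\ref{prop:Lefschetz} refers back to this very construction, so invoking it here is circular in presentation though not in content), and your ``consecutive differences'' of parallel thimbles are precisely the handle slides the paper performs to produce the sphere basis $F_{\alpha,i},F_{\beta,i}$.

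There is, however, a genuine gap. You assert that the $2N$ difference classes span $H_2(Y;\mathbb{C}_{\psi_\omega})$ so that $C_{(m,n,N)}(\omega)$ is the \emph{full} twisted intersection form. But from the handle decomposition one only has
\[
\dim_{\mathbb{C}} H_2(Y;\mathbb{C}_{\psi_\omega}) \;=\; 2(N+1)\;-\;\dim_{\mathbb{C}}\operatorname{span}\{[\alpha],[\beta]\}\ \text{ in }H_1(\Sigma;\mathbb{C}_{\psi_\omega}),
\]
so the rank is $2N$ if and only if $[\alpha]$ and $[\beta]$ are linearly independent in twisted $H_1$. This is not automatic from $\omega\neq 1$ and is not implied by nondegeneracy of $C_{(m,n,N)}(\omega)$ on the subspace. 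The paper spends the bulk of its proof on exactly this point: a Fox-calculus computation of $[\alpha],[\beta]\in H_1(E,\star;\mathbb{C}_\Phi)\cong\mathbb{C}^4$ followed by a nontrivial case analysis ($\omega^{n-1}=\omega^{\pm 1}$, using $\|\omega\|=1$) to rule out dependence. Your Fox-calculus step computes the equivariant \emph{intersection number} (the scalar $G_{(m,n)}(\omega)$), which is a different and much easier computation; it does not substitute for linear independence. Without that step, your $2N$ classes could sit properly inside a larger $H_2$, and $\operatorname{sign}C_{(m,n,N)}(\omega)$ need not equal $\sigma(Y;\psi_\omega)$.

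A smaller caution: computing $\sigma(Y)$ as a ``local count over the $2(N+1)$ separating Lefschetz singularities'' is risky, since the standard local signature contribution of a separating vanishing cycle is $-1$. The paper avoids this by observing directly that the $2(N+1)$ capped-off tori are pairwise disjoint with self-intersection $+1$, so the untwisted form is the identity matrix and $\sigma(Y)=2(N+1)$.
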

\begin{proof}  First, we claim that $N_\text{id} \cong \#_{2g} S^1 \times S^2$ where $\text{id} : \Sigma_{g,1} \rightarrow \Sigma_{g,1}$ and that the inclusion map $\Sigma_{g,1} \times \{0\} \rightarrow N_\text{id}$ induces an isomorphism on $\pi_1(-,\star)$.  To see this first observe that $N_{id}=\partial (\Sigma\times D^2$). Then note that since $\Sigma$ may be built from a single $0$-handle and $2g$ $1$-handles,  $\Sigma\times D^2$ may be built from a $4$-ball and $2g$ ($4$-dimensional)  $1$-handles. Any such (orientable) manifold is homeomorphic to $\natural_{2g} S^1\times B^3$ (here $\natural$ denotes boundary connected sum).

Fix the integers $N, n$, $m$, let $f=f_{(m,n,N)}$ and consider the following set of $2N+2$ curves in $\Sigma \times \left[0,1\right] \subset N_{\text{id}}$
$$\mathcal{S} = \{  \beta \times \{{2i}/{(2N+2)}\}, \alpha \times \{(2i + 1)/(2N+2)\}~|~ 0\leq i \leq N \}.$$
Let $X$ be the $4$-manifold obtained by attaching $2N+2$ $2$-handles to $N_{\text{id}} \times I$ along the curves in $\mathcal{S} \times \{1\} \subset N_{\text{id}} \times \{1\}$, each with $+1$ framing.  Then $\partial X = \overline{N}_{\text{id}} \sqcup N_f.$ This statement is well-known ~\cite[proof of Theorem 2]{Lick1}\cite[p.277]{R}. For the reader who is unfamiliar with it, note that it suffices to show that adding a single $2$-handle with $+1$-framing yields a new ``top'' boundary component that still fibers over the circle but whose monodromy is altered by a Dehn twist along the attaching circle of the handle. In turn, to prove this latter fact, it suffices to prove it for the product fibration of an annulus over $S^1$ (since the handles are added along a thickened annulus).

Since the curves $\alpha$ and $\beta$ are null-homologous in $\Sigma$, the $2$-handles are attached to circles that are null-homologous in $N_{id}$. Thus, using equation \eqref{eq:pi1N}, the inclusion map  $i_1: N_{id} \rightarrow X$ induces an isomorphism on $H_1(-;\mathbb{Z})$. It follows that the inclusion map $i_1: N_{f} \rightarrow X$ induces an isomorphism on $H_1(-;\mathbb{Z})$. Hence we can extend $\psi_\omega \circ \pi : N_{f} \rightarrow U(1)$ to $\Phi : \pi_1(X) \rightarrow U(1)$ in the obvious way so that $\Phi_{| N_\text{id}}=  \psi_\omega \circ \pi $. Recall that $N_\text{id}$ is the boundary of the boundary-connected-sum of $2g$ copies of $S^1 \times B^3$, denoted $E$, wherein the inclusion map induces an isomorphism on $\pi_1(-)$.  Let $W = X \cup \overline{E}$.  Since the inclusion map $N_{\text{id}} \rightarrow E$ induces an isomorphism on $\pi_1(-)$, we can extend $\Phi: \pi_1(X) \rightarrow U(1)$ to $\Phi : \pi_1(W) \rightarrow U(1)$. Thus,
\begin{equation}\rho_\omega(f) = \sigma(W,\mathbb{C}_\Phi) - \sigma_0(W) \label{eq}
\end{equation}
where  $\sigma(W,\mathbb{C}_\Phi)$ is the twisted signature of $W$ (twisted by $\Phi$) and $\sigma_0(W)$ is the ordinary signature of $W$.

We first consider $H_2(W)$.  Since each curve, $\alpha$ and $\beta$, bounds a punctured torus in $\Sigma$, $H_2(W) \cong \mathbb{Z}^{2N+2}$; it is generated by the tori obtained by capping off these punctured tori by disks that are the cores of the attached 2-handles.  Note that the tori are all disjointly embedded and they have self-intersection $+1$. Thus $\sigma_0(W)=2N+2$.

Next we consider $H_2(W;\mathbb{C}_\Phi)$. Let $Y_1$ be the $4$-manifold obtained attaching two $2$-handles to $E$ along $\beta \times \{0\}$ and $\alpha \times \{1/(2N)\} \subset N_\text{id}=\partial{E}$. We claim that $H_2(Y_1;\mathbb{C}_\Phi)=0$. This involves a calculation using Fox calculus. Since $H_2(E;\mathbb{C}_\Phi)=0$, $H_2(Y_1;\mathbb{C}_\Phi)=0$ if and only if $\beta \times \{0\}$ and $\alpha \times \{1/(2N)\}$ are linearly independent in $H_1(E;\mathbb{C}_\Phi)$.  Since $H_1(E;\mathbb{C}_\Phi) \subset H_1(E,\star;\mathbb{C}_\Phi)$, it suffices to consider $\beta \times \{0\}$ and $\alpha \times \{1/(2N)\}$ in $H_1(E,\star;\mathbb{C}_\Phi)$.   We denote $x_1, y_1, x_2, y_2$ by $x,y,z,w$ respectively and view these as the generators of $\pi_1(E)$.
Let $\tilde{\star}$ be a lift of $\star$ to the universal cover of $E$ and $\tilde{x},\tilde{y},\tilde{z},\tilde{w}$ be lifts of $x,y,z,w$ starting at $\tilde{\star}$ respectively.   Then $H_1(E,\star;\mathbb{C}_\Phi)\cong \mathbb{C}^{4}$ is generated by $\{\mathbf{x} = \tilde{x} \otimes 1,  \mathbf{y} = \tilde{y} \otimes 1, \mathbf{z}=\tilde{z} \otimes 1, \mathbf{w} = \tilde{w}\otimes 1\}$.

Let $\gamma$ be a path on $\Sigma$ that goes ``straight'' from $\star$ to the ``top'' intersection of $\alpha$ and $\beta$.  We will use $\gamma$ along with ``straight line'' paths in the $[0,1]$ direction of $\Sigma \times [0,1] \subset N_\text{id}$ to base the curves in $\mathcal{S}$.  Orient $\alpha$ and $\beta$ so that the arrows on their rightmost vertical segments are pointing upward.  With these conventions, $\alpha=z^{-1}[z,w]z$ and $\beta=[y,x^{-1}][(yx^m)^{-1},z^nw^{-1}]$ in $\pi_1(E)$. We calculate the Fox derivatives of $\alpha$ and $\beta$ with respect to $x,y,z,w$.
  \begin{align*} \frac{\partial \alpha}{\partial x} &= \frac{\partial \alpha}{\partial y}=0\\
  \frac{\partial \alpha}{\partial z}&=wz^{-1}(w^{-1}-1) \\
  \frac{\partial \alpha}{\partial w}&=1-wz^{-1}w^{-1}\\
  \frac{\partial \beta}{\partial x}&=  yx^{-1}(y^{-1} -1)+ [y,x^{-1}] x^{-m}(y^{-1} z^n w^{-1} y-1)(1+ \cdots +  x^{m-1})\\
   \frac{\partial \beta}{\partial y}&=1-yx^{-1} y^{-1} + [y,x^{-1}] x^{-m}y^{-1}(z^nw^{-1}-1)\\
   \frac{\partial \beta}{\partial z}&=[y,x^{-1}] x^{-m}y^{-1}(1-z^n w^{-1} y x^m w z^{-n})(1 + z + \cdots + z^{n-1})\\
   \frac{\partial \beta}{\partial w}&= [y,x^{-1}] x^{-m} y^{-1} z^n w^{-1} (yx^m -1) \\
  \end{align*}
 Setting $x=y=z=w=\omega$, we can write $\alpha$ and $\beta$ as elements of $H_1(E,\star;\mathbb{C}_\Phi)$.
  \begin{align*}
  \alpha =& (\omega^{-1} -1) \mathbf{z} + (1-\omega^{-1})\mathbf{w}\\
  \beta =& \left((\omega^{-1} -1)+ \omega^{-m}(\omega^{n-1} -1)(1+ \omega + \cdots + \omega^{m-1})\right)\mathbf{x}+ \left( 1-\omega^{-1} + \omega^{-(m+1)}(\omega^{n-1}-1) \right)\mathbf{y} +\\
  &+ \left( (\omega^{-m-1}-1)(1+\omega + \cdots + \omega^{n-1}) \right)\mathbf{z} + \left( \omega^{n-m-2}(\omega^{m+1} -1) \right)\mathbf{w}
  \end{align*}

  Since $\omega\neq 1$, $\alpha \neq 0$.  We now show that $\beta$ is not a multiple of $\alpha$ which will complete the proof that $H_2(Y_1;\mathbb{C}_\Phi)=0$.
  Suppose $\beta = \lambda \alpha$ then we have the following system of equations.
 \begin{align}  (1-\omega)(\omega^{-1} -1) &=  (\omega^{n-1} -1)(\omega^{-m}-1) \\
\omega^{-1} -1 &=  \omega^{-(m+1)}(\omega^{n-1}-1) \\
(\omega^{-(m+1)}-1)(\omega^n -1) &= \lambda (\omega^{-1} -1)(\omega-1) \\
\omega^{n-m-2}(\omega^{m+1} -1) &= \lambda (1-\omega^{-1})
 \end{align}
 Taking the norm of both sides of $(19)$, we see that $||\omega^{-1}-1||=||\omega^{n-1}-1||$. Since $\omega^{-1}$ and $\omega^{n-1}$ are on the unit circle, this implies that $\omega^{n-1}=\omega^{-1}$ or $\omega^{n-1} = \omega$.

 We first consider the case that $\omega^{n-1}=\omega^{-1}$.  In this case, $\omega^n=1$ so using equation $(20)$, we see that $\lambda (\omega^{-1} -1)(\omega-1) =0$.  Since $\omega \neq 1$, we have that $\lambda=0$.  By equation $(21)$, $\omega^{m+1}=1$. However, this cannot happen since substituting $\omega^n=1$ and $\omega^{m+1}=1$ in equation $(18)$ gives $-(\omega-1)(\omega^{-1}-1) = (\omega^{-1}-1)(\omega -1)$.

 We now consider the case that $\omega^{n-1}=\omega$. Substituting this into equation (19) and multiplying both sides by $\omega$ gives $(1-\omega) =\omega^{-m} (\omega-1)$. Since $\omega \neq 1$, we must have that $\omega^{-m}=-1$. With the substitutions $\omega^{n-1}=\omega$ and $\omega^{-m}=-1$, equation (18) becomes $(1-\omega)(\omega^{-1} -1) =  -2(\omega-1)$.  However, this would imply that $\omega^{-1}=3$ which cannot happen since $\omega$ is on the unit circle. This completes the proof that $\alpha$ and $\beta$ are linearly independent and hence $H_2(Y_1;\mathbb{C}_\Phi)=0$.

Now we return to our calculation of $H_2(W;\mathbb{C}_\Phi)$.  Let $U$ be the region in Figure~\ref{dashed} enclosed by the dashed lines.  A picture of the attaching curves (when $N=3$) in $U \times I$ is shown in Figure~\ref{attach}.  The attaching curves outside of $U \times I$ are ``parallel'' to the original $\alpha$ or $\beta$.

 \begin{figure}[htb]
 \begin{picture}(354,260)(0,0)
\put(180,15){$U$}
\put(0,0){\includegraphics[scale=.40]{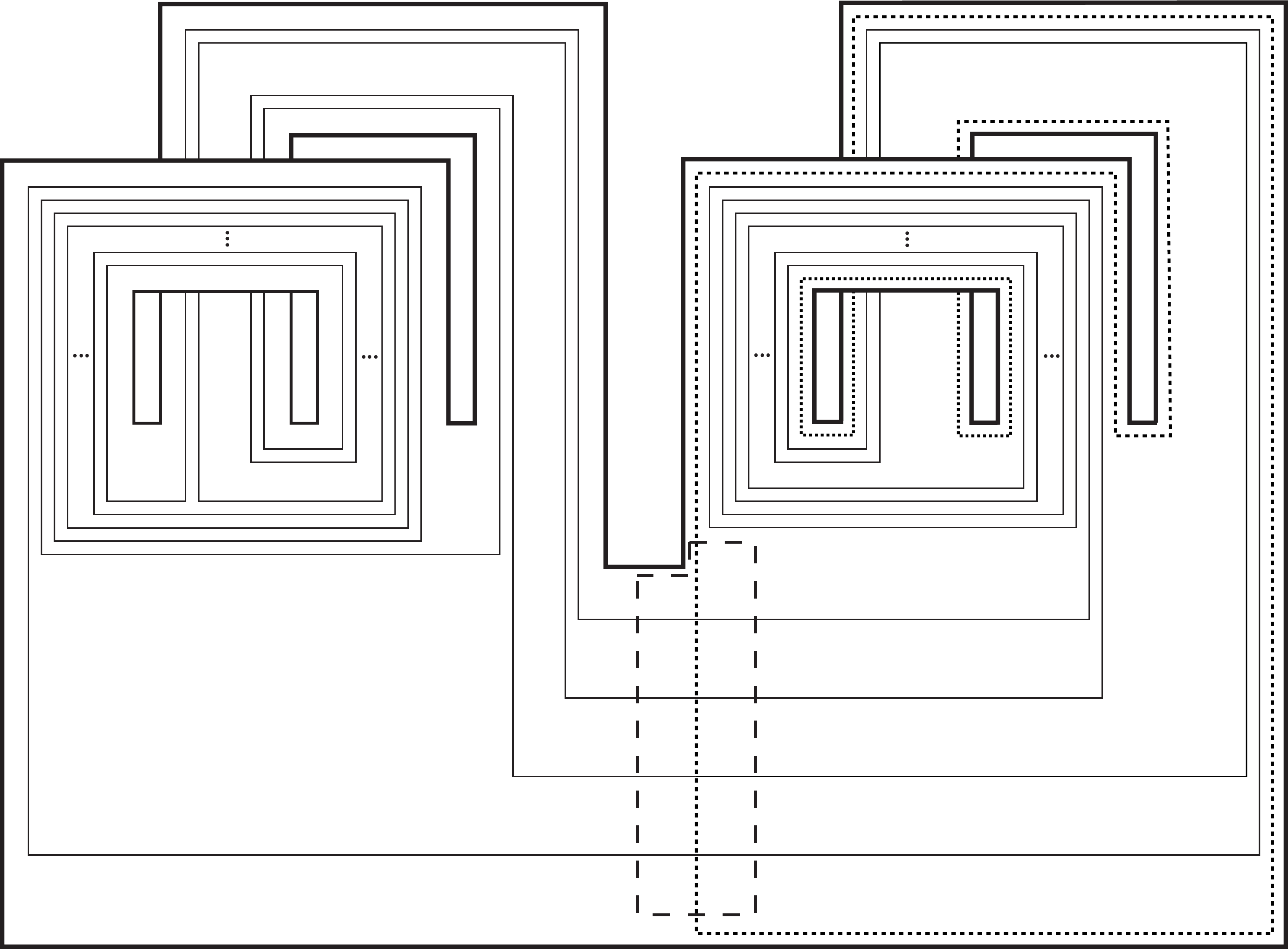}}
\end{picture}
\caption{The region $U$ in $\Sigma$} \label{dashed}
\end{figure}

 \begin{figure}[htb]
\begin{picture}(153,230)(0,0)
\put(24,8){$U$}
\put(180,180){$\beta \times \{0/8\}$}
\put(193,168){$\vdots$}
\put(180,158){$\beta \times \{6/8\}$}
\put(62,225){$\alpha \times \{1/8\}$}
\put(107,225){$\cdots$}
\put(123,225){$\alpha \times \{7/8\}$}
\put(20,1){\includegraphics{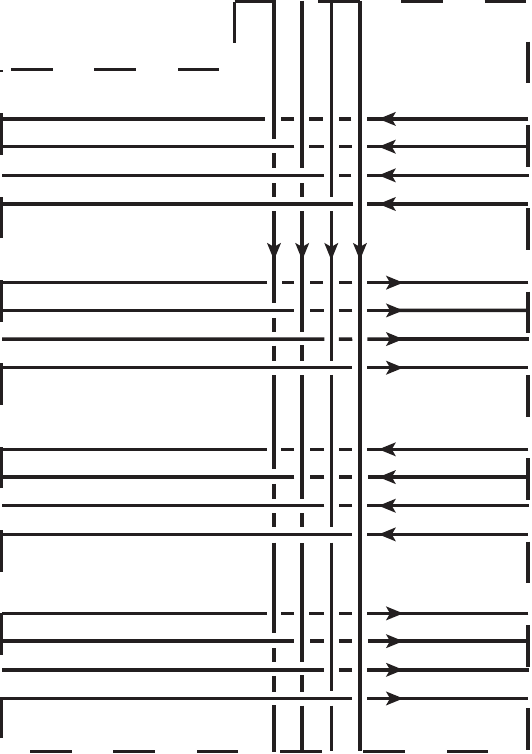}}
\end{picture}
\caption{Attaching curves when $N=3$} \label{attach}
\end{figure}

 Slide the handle attached along
$$
\alpha \times \{{(2N+1)}/{(2N+2)}\}
$$
over the handle attached along $\alpha \times \{{(2N-1)}/{(2N+2)}\}$ and call the resulting attaching curve $\alpha^\ast_{N}$.  Then slide the handle attached along $\beta \times \{{(2N)}/{(2N+2)}\}$ over the handle attached along $\beta \times \{{(2N-2)}/{(2N+2)}\}$ and call the resulting attaching curve $\beta^\ast_{N}$.  Continue this; for $i$ from $1$ to $N-1$, slide the handle attached along $\alpha \times \{{(2N-2i+1)}/{(2N+2)}\}$ (respectively $\beta \times \{{(2N-2i)}/{(2N+2)}\}$)  over the handle attached along $\alpha \times \{{(2N-2i-1)}/{(2N+2)}\}$ (respectively $\beta \times \{{(2N-2i-2)}/{(2N+2)}\}$) and call the resulting attaching curve $\alpha^\ast_{N-i}$ (respectively $\beta^\ast_{N-i}$).  A local picture of the new attaching curves  is shown in Figure~\ref{handle_slide}.

 \begin{figure}[htb]
\begin{picture}(174,300)(0,0)
\put(10,10){$U$}
\put(178,213){$\beta^\ast_{3}$}
\put(178,227){$\beta^\ast_{2}$}
\put(178,241){$\beta^\ast_{1}$}
\put(103,295){$\alpha^\ast_{3}$}
\put(90,295){$\alpha^\ast_{2}$}
\put(77,295){$\alpha^\ast_{1}$}
\put(0,0){\includegraphics{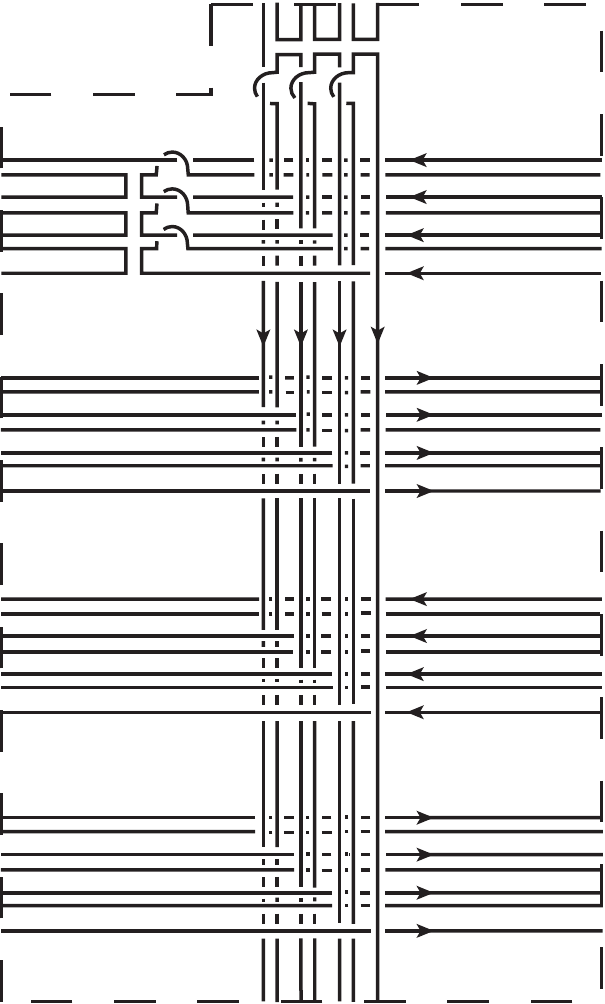}}
\end{picture}
\caption{Attaching curves after handle slides when $N=3$}\label{handle_slide}
\end{figure}

Note that each $\alpha_i^\ast$ (respectively $\beta_i^\ast$), oriented as described, bounds an obvious oriented embedded disk $D_{\alpha,i}$ (respectively $D_{\beta,i}$) in $Y_1$ for $1 \leq i \leq N$.  For each $1\leq i \leq N$, let $F_{\alpha,i}$ (respectively $F_{\beta,i}$) be the oriented embedded $2$-sphere obtained by gluing the core of the $2$-handle attached along $\alpha_i^\ast$ (respectively $\beta_i^\ast$) to $D_{\alpha,i}$ (respectively $D_{\beta,i}$) so that the orientation of $F_{\alpha,i}$ (respectively $F_{\beta,i}$) agrees with the orientation on $D_{\alpha,i}$ (respectively $D_{\beta,i}$). Therefore $H_2(W;\mathbb{C}_\Phi)\cong \mathbb{C}^{2N}$ and has as an ordered basis $F_{\alpha,1},\dots, F_{\alpha,N},F_{\beta,1}, \dots, F_{\beta,N}$.
Using this basis, it is straightforward to check that the intersection form on $H_2(W;\mathbb{C}_\Phi)$ is given by the matrix in equation \eqref{int_matrix}.  For example, consider $F_{\alpha,1} \cdot F_{\beta,1}$.  After making the surfaces transverse, there are $4$ intersection points ($2$ positive and $2$ negative).  Taking into account the weightings from $\pi_1(W)$, we see that the equivariant intersection number is $-1 + z^n w^{-1} - z^n w^{-1} x^{-m} y^{-1} + z^n w^{-1} x^{-m} y^{-1} w  z^n  \in \Z [\pi_1(W)]$.
Therefore $F_{\alpha,1} \cdot F_{\beta,1} = -1 + \omega^{n-1} - \omega^{(n-1) - (m+1)} + \omega^{-(m+1)} = -G_{(m,n)}(\omega).$
\end{proof}

We interrupt our discussion to point out an interesting connection to signatures of Lefschetz fibrations:
\begin{proposition}\label{prop:Lefschetz} Given $\Sigma_{g,m}$, suppose that $D_1,\dots,D_n$ are positive Dehn twists along null-homologous circles. Then, for any unitary representation $\psi$ of $F/[F,F]\equiv H_1(\Sigma;\Z)$,
$$
\rho_\psi(D_n\circ\dots\circ D_1)= \sigma(Y,\psi)-\sigma(Y)
$$
where $Y$ is the Lefschetz fibration over the $2$-disk with generic fiber $\Sigma$ and with $n$ singular fibers whose monodromies are $D_1,\dots,D_n$.
\end{proposition}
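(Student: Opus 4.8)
The plan is to realize this as a direct application of the Atiyah--Patodi--Singer signature theorem (Theorem~\ref{thm:APS}), once the boundary of the Lefschetz fibration $Y$ is identified with $N_f$ and the representation $\psi$ is shown to extend over $Y$. Write $f = D_n\circ\dots\circ D_1$. Each $D_i$ lies in $\mathcal{I}=J([F,F])$, since a Dehn twist along a null-homologous curve acts trivially on $H_1(\Sigma)$; hence $f\in\mathcal{I}$ and $\rho_\psi(f)=\rho(N_f,\psi_f)$ is defined as in Definition~\ref{def:rho}.

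First I would analyze the boundary of $Y$. As a Lefschetz fibration $\pi\colon Y\to D^2$ with generic fiber $\Sigma$, the boundary $\partial Y$ decomposes, after smoothing corners, into a horizontal part $\pi^{-1}(\partial D^2)$ and a vertical part built from the fiber boundaries. The horizontal part is the $\Sigma$-bundle over $S^1=\partial D^2$ whose total monodromy is $D_n\circ\dots\circ D_1=f$, i.e. the mapping torus $M_f$. Since the Dehn twists are supported in the interior of $\Sigma$, the monodromy is the identity near $\partial\Sigma$, so the vertical part is $\partial\Sigma\times D^2$, glued to $M_f$ along $\partial\Sigma\times\partial D^2=\partial M_f$. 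This is exactly the longitudinal Dehn filling defining $N_f$ in Subsection~\ref{subsection:3manifolds}: the disks $\{*\}\times D^2$ and $\{z_i\}\times D^2$ cap off precisely the circles $t$ and $t_i$. Hence $\partial Y=N_f$, and I would record the orientation bookkeeping showing the induced orientation on $\partial Y$ agrees with that of $N_f$.

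Next I would show $\psi$ extends over $\pi_1(Y)$ with the correct restriction. Since $Y$ is obtained from $\Sigma\times D^2\simeq\Sigma$ by attaching one $2$-handle along each vanishing cycle $c_i$ (in distinct regular fibers), one has $\pi_1(Y)\cong\pi_1(\Sigma)/\langle\langle c_1,\dots,c_n\rangle\rangle$. Because each $c_i$ is null-homologous, $[c_i]=0$ in $H_1(\Sigma)=F/[F,F]$, so $\psi(c_i)=1$ and $\psi$ factors through $\pi_1(Y)$, giving an extension $\widetilde\psi\colon\pi_1(Y)\to U(\mathcal{H})$. The inclusion $N_f=\partial Y\hookrightarrow Y$ then intertwines $\psi_f$ and $\widetilde\psi$, since both are pulled back from the single coefficient system $\psi\colon H_1(\Sigma)\to U(\mathcal{H})$ via the natural maps to $H_1(\Sigma)$ --- exactly the compatibility used in the proof of Lemma~\ref{lem:maincalc}.

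With these two facts in hand, I would apply Theorem~\ref{thm:APS} to $(Y,\widetilde\psi)$, whose boundary is $(N_f,\psi_f)$, yielding
$$
\rho(N_f,\psi_f)=\sigma(Y,\widetilde\psi)-\sigma(Y)
$$
in the $1$-dimensional case (for a $\dim\mathcal{H}=k$ representation the second term is $k\,\sigma(Y)$, matching the factor in Theorem~\ref{thm:APS}). Since the left side is $\rho_\psi(f)$ by Definition~\ref{def:rho} and $\sigma(Y,\widetilde\psi)=\sigma(Y,\psi)$, this is the asserted formula. I expect the main obstacle to be the first step: pinning down the identification $\partial Y\cong N_f$ as oriented $3$-manifolds, in particular matching the framing of the vertical-boundary filling with the longitudinal filling of $N_f$ and tracking the orientation induced from $Y$, so that the signature difference appears with the correct sign. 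The extension step and the application of Theorem~\ref{thm:APS} are then routine.
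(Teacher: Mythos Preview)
Your proposal is correct and follows essentially the same approach as the paper: identify $\partial Y\cong N_f$, extend $\psi$ over $\pi_1(Y)$ using that the vanishing cycles are null-homologous, and apply Theorem~\ref{thm:APS}. The only cosmetic difference is that the paper first recycles the explicit cobordism $W$ built in the proof of Lemma~\ref{lem:maincalc} (namely $E\cong\Sigma\times D^2$ with $2$-handles attached along the Dehn-twist curves) and then invokes the standard handle description of a Lefschetz fibration over the disk to identify $W$ with $Y$, whereas you analyze $Y$ directly; both routes amount to the same computation.
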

\begin{proof} Note that the construction of the $4$-manifold $W$ in the preceding proof will produce, in this greater generality, a null-bordism for $N_{D_n\circ\dots\circ D_1}$. Thus by Theorem~\ref{thm:APS},
$$
\rho_\psi(D_n\circ\dots\circ D_1)= \sigma(W,\psi)-\sigma(W).
$$
Then it is only necessary to identify $W$ with $Y$.  For this it is known that $W$ is obtained from $E\cong \Sigma\times D^2$, by adding two handles along separating curves.
For details see, for example, ~\cite[Section 8.2]{GomStip}.
\end{proof}

\begin{lemma}\label{calc_sig}Let $r\geq 2$ and $N_0 \geq 0$ be integers.  Then

$$\mathrm{signature}(C_{(r-1,r+1,2N_0)}(\omega)) =  \left\{ \begin{array}{cc} 4N_0 & \text{if } \omega^r =  1  \\ 0 & \text{if } \omega^r = \pm i  \end{array} \right. $$
\end{lemma}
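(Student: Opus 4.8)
The plan is to treat $C := C_{(r-1,r+1,2N_0)}(\omega)$ as a Hermitian form whose signature is invariant under congruence $C \mapsto P^*CP$, and to first reduce the single scalar coefficient
\[
G := G_{(r-1,r+1)}(\omega) = \left(\omega^{(r+1)-1}-1\right)\left(\omega^{-((r-1)+1)}-1\right) = (\omega^r-1)(\omega^{-r}-1) = |\omega^r-1|^2,
\]
which is \emph{real and non-negative} since $|\omega|=1$. In particular $G=0$ exactly when $\omega^r=1$, while $\mathrm{Re}(\omega^r)=0$ (i.e. $\omega^r=\pm i$) gives $G=2$. This makes $C$ a real symmetric matrix in both cases of the lemma, so I may compute signatures over $\R$.

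For the case $\omega^r=1$, we have $G=0$, so $C = A\oplus A$. Here $A$ is the $(2N_0)\times(2N_0)$ path-graph Laplacian-type matrix, which is positive definite (its eigenvalues $2-2\cos(k\pi/(2N_0+1))$ are all positive). Hence $\mathrm{signature}(C)=2\cdot 2N_0 = 4N_0$, as claimed. The substantive case is $\omega^r=\pm i$, where $G=2$ and
\[
C = \begin{pmatrix} A & 2B^T \\ 2B & A \end{pmatrix}.
\]
The hinge is the algebraic identity $A = -(B+B^T)$, which follows at once by writing $B=-I+L$ with $L$ the unit subdiagonal shift, so that $B+B^T = -2I+(L+L^T) = -A$. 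Conjugating by the orthogonal involution $P=\tfrac{1}{\sqrt2}\begin{pmatrix} I & I \\ I & -I\end{pmatrix}$ gives
\[
P^TCP = \begin{pmatrix} A+B+B^T & B-B^T \\ B^T-B & A-B-B^T \end{pmatrix} = \begin{pmatrix} 0 & K \\ -K & 2A \end{pmatrix}, \qquad K := B-B^T = L-L^T,
\]
where $K$ is skew-symmetric. Taking the Schur complement against the positive-definite block $2A$ yields the congruence $P^TCP \sim \tfrac12 KA^{-1}K \,\oplus\, 2A$, so
\[
\mathrm{signature}(C) = \mathrm{signature}(2A) + \mathrm{signature}\!\left(\tfrac12 KA^{-1}K\right) = 2N_0 + \mathrm{signature}(KA^{-1}K).
\]
Because $v^TKA^{-1}Kv = -(Kv)^TA^{-1}(Kv)\le 0$ (using $K^T=-K$ and $A^{-1}\succ 0$), the form $KA^{-1}K$ is negative semidefinite with kernel exactly $\ker K$, so its signature is $-\rk(K)$.

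It then remains only to compute $\rk(K)$. The matrix $K$ is skew-symmetric tridiagonal with unit off-diagonal entries; for even size its determinant is $1$ (its Pfaffian, coming from the unique perfect matching of the path on $2N_0$ vertices, is $\pm 1$; equivalently its eigenvalues $2i\cos(k\pi/(2N_0+1))$ are all nonzero precisely because $2N_0$ is even). Hence $\rk(K)=2N_0$ and $\mathrm{signature}(C)=2N_0-2N_0=0$. I expect the main obstacle to be the $\omega^r=\pm i$ computation: identifying the right symmetrizing congruence $P$ and recognizing that the identity $A=-(B+B^T)$ is what collapses the diagonal blocks. The parity input is essential and is precisely why the lemma is stated for the even index $2N_0$: an odd-sized skew-symmetric $K$ would be singular, the cancellation would fail, and the signature would not vanish.
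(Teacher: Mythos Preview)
Your proof is correct. Both your argument and the paper's hinge on the same two ingredients: the identity $A=-(B+B^T)$ and the nonsingularity of the skew-symmetric tridiagonal matrix $K=B-B^T$ when $N=2N_0$ is even (indeed $A+2B=-(B+B^T)+2B=B-B^T=K$, so the paper's ``$A+2B$'' is exactly your $K$).

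The difference is purely in the packaging of the $\omega^r=\pm i$ case. The paper conjugates by the unipotent block matrix $\left(\begin{smallmatrix} I & I\\ 0 & I\end{smallmatrix}\right)$ to obtain $C'=\left(\begin{smallmatrix} A & A+2B^T\\ A+2B & 0\end{smallmatrix}\right)$, then argues: nonsingular Hermitian form with a half-dimensional isotropic subspace has signature $0$, reducing everything to $\det(A+2B)\neq 0$, which it checks by a two-step determinant recursion. You instead conjugate by the orthogonal involution $\tfrac{1}{\sqrt2}\left(\begin{smallmatrix} I & I\\ I & -I\end{smallmatrix}\right)$ to get the zero block in the other corner, then take a Schur complement against $2A$ and read off the signature as $2N_0-\operatorname{rk}K$. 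Your route is slightly more informative in that it exhibits the cancellation $2N_0-2N_0$ explicitly, while the paper's ``half-rank isotropic $\Rightarrow$ signature $0$'' is a cleaner one-liner once the zero block is in place; either way the even-$N$ hypothesis enters only through the invertibility of $K$.
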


\begin{proof}
Let $m=r-1$ and $n=r+1$ and $N=2N_0$.  Since $|| \omega || =1$, we have $G_{(m,n)}(\omega) = || \omega^r -1 ||^2$.  So when $\omega^r =1$, $G_{(m,n)}(\omega)=0$ so $C_{(r-1,r+1,2N_0)}(\omega))$ is a block sum of $2$ copies of $A$.   It is easily shown that $A$ has signature $N=2N_0$ (or note that $A=B^*B$) so $\mathrm{signature}(C_{(r-1,r+1,2N_0)}(\omega))=2N = 4N_0$.

We now consider the case when $\omega^r =\pm i$.  In this case, $G_{(m,n)}(\omega) = 2$.  By adding rows/columns $1$ through $N$ to rows/columns $N+1$ through $2N$ respectively, we see that $C_{(m,n,N)}(\omega)$ is congruent to the following matrix
\begin{equation} \left( \begin{array}{cc} A &  A + 2 B^T  \\  A + 2 B & 2 A + 2 B^T + 2 B\end{array} \right) =  \left( \begin{array}{cc} A &  A + 2 B^T  \\  A + 2 B & 0\end{array} \right). \label{nonsing}
\end{equation}
Let $C^\prime$ be the matrix in equation \eqref{nonsing}. We will show that $C^\prime$ is non-singular whenever $N$ is even.  Since $C^\prime$ has a half block of zeros in the lower right corner, it follows that it has signature $0$ which will complete the proof.

First note that $\mathrm{det}(C^\prime) = -\mathrm{det}(A+ 2B)^2$ so it suffices to show that $\mathrm{det}(A+ 2B) \neq 0$. We will prove $\mathrm{det}(A+ 2B) = 1$ by induction on even $N$.
$$A + 2B = \left( \begin{array}{cccccc} 0 & 1 & 0 & \cdots & 0 & 0 \\
-1 & 0 & 1 & & \\
0 & -1 & 0 & &  \\
\vdots & & & \ddots &\\
0  &&&& 0 & 1 \\
0 &&&& -1 & 0
\end{array} \right)_{N \times N}$$
When $N=2$, $\det(A + 2B)=1$.  Suppose $\mathrm{det}(A+2B_{N\times N}) =1$ for some even $N$.  We expand the determinant  twice (first along the first column and then along the first row) to get the inductive formula:  $\mathrm{det}(A+2B_{(N+2)\times (N+2)}) = \mathrm{det}(A+2B_{N \times N})$.  Hence $\mathrm{det}(A+2B_{(N+2)\times (N+2)})=1$.

\end{proof}

For $k\geq 1$, let $\omega_k := e^{2\pi i/4^k}$ and set $\rho_k := \rho_{\omega_k}$.
We will show that the set of $\rho_k$ generates an infinitely generated subset of $\widehat{Q}(\mathcal{J}(3))$.

\begin{theorem}\label{thm:infgenquasi} For $g\geq 2$, $\{\rho_k\}$ is a linearly independent subset of $\widehat{Q}(\mathcal{J}(3))$.
\end{theorem}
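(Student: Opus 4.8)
The plan is to separate the $\rho_k$ by their \emph{growth rates} along cleverly chosen sequences of powers, arranging matters so that the resulting matrix of values is triangular and hence nonsingular. Since linear independence of an infinite set means linear independence of every finite subset, I first fix indices $k_1<k_2<\cdots<k_J$ and suppose that a combination $\sum_{j} c_j\rho_{k_j}$ vanishes in $\widehat{Q}(\mathcal{J}(3))$; in either reading of the space this means the function $\sum_j c_j\rho_{k_j}$ is bounded. Because each $\rho_\omega$ is a quasimorphism (Proposition~\ref{prop:quasihomo}), its homogenization $\bar\rho_\omega(h):=\lim_{p\to\infty}\rho_\omega(h^p)/p$ exists for every $h\in\mathcal{J}(3)$, is linear in $\rho_\omega$, and annihilates bounded functions. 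Thus applying $\bar{(\,\cdot\,)}$ to the relation reduces the problem to producing elements $h_1,\dots,h_J$ for which the matrix $M=[\bar\rho_{k_i}(h_l)]_{i,l}$ is nonsingular.

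For each column index $l$ I set $r_l=3\cdot 4^{\,k_l-1}$ (note $r_l\geq 2$) and take $h_l=D_\alpha\circ D_{\beta(r_l-1,\,r_l+1)}$, the element whose $(N+1)$-st power is $f_{(r_l-1,\,r_l+1,\,N)}$. Combining Lemma~\ref{lem:maincalc} with Lemma~\ref{calc_sig}, and letting $N=2N_0\to\infty$ along $p=2N_0+1$ (the full homogenization limit exists, so it may be computed on this subsequence), I obtain, for any norm-$1$ $\omega\neq 1$,
\[
\bar\rho_\omega(h_l)=\begin{cases} 0 & \text{if } \omega^{r_l}=1,\\ -2 & \text{if } \omega^{r_l}=\pm i,\end{cases}
\]
since in the first case $\rho_\omega(h_l^{p})=-2$ is bounded and in the second $\rho_\omega(h_l^{p})=-2p$.

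The arithmetic of $\omega_k=e^{2\pi i/4^k}$ now yields the triangular pattern. When $k_i<k_l$ we have $4^{k_i}\mid 4^{\,k_l-1}\mid r_l$, so $\omega_{k_i}^{\,r_l}=1$ and $\bar\rho_{k_i}(h_l)=0$; on the diagonal $\omega_{k_l}^{\,r_l}=e^{2\pi i\cdot 3/4}=i^{3}=-i$, so $\bar\rho_{k_l}(h_l)=-2$. Hence $M$ is lower triangular with every diagonal entry equal to $-2$; the entries strictly below the diagonal (where $\omega_{k_i}^{\,r_l}\notin\{1,\pm i\}$ and Lemma~\ref{calc_sig} gives no value) are irrelevant, being merely some well-defined real numbers. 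Therefore $\det M=(-2)^J\neq 0$, and evaluating $\overline{\sum_j c_j\rho_{k_j}}=0$ at the $h_l$ gives the homogeneous system $\sum_j c_j\,\bar\rho_{k_j}(h_l)=0$, forcing $c_1=\cdots=c_J=0$.

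The genuinely hard analytic work — the Fox-calculus and handle-slide computation of the twisted intersection form and its signature — is already packaged in Lemmas~\ref{lem:maincalc} and \ref{calc_sig}, so what remains is not really an obstacle but a bookkeeping point: one must choose the family $\{h_l\}$ to exploit exactly when $\omega_k^{\,r}$ lands in $\{1,\pm i\}$ (the only cases the signature lemma evaluates) so that the evaluation matrix becomes triangular. The factor $3$ in $r_l=3\cdot 4^{\,k_l-1}$ serves only to keep $r_l\geq 2$ (hence $m=r_l-1\geq 1$, so Lemma~\ref{calc_sig} applies) while preserving $\omega_{k_l}^{\,r_l}=\pm i$.
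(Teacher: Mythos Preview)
Your proof is correct and follows essentially the same strategy as the paper: both evaluate the $\rho_{k_i}$ on the explicit family $f_{(r-1,\,r+1,\,2N_0)}$ via Lemmas~\ref{lem:maincalc} and~\ref{calc_sig} and exploit the arithmetic of when $\omega_k^{\,r}\in\{1,\pm i\}$ to separate the indices. Your packaging through homogenization and a triangular evaluation matrix is equivalent to the paper's direct argument (fix one $r=4^{j}$, let $N_0\to\infty$, and isolate a single coefficient), and your factor of $3$ in $r_l=3\cdot4^{\,k_l-1}$ is a tidy device to ensure $r_l\geq 2$ so that the hypothesis $m\geq 1$ of Lemma~\ref{lem:maincalc} is met even when some $k_l=1$.
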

\begin{proof} To prove this, we must show that no non-trivial linear combination of the $\rho_k$ is a bounded function.  Let $k_1, \dots, k_l$ be an increasing sequence of $l$ positive integers.  Suppose that
$$\displaystyle\sum_{i=1}^l a_i \rho_{k_i} = \delta$$
where $a_i\neq 0$, $|\delta(g)| \leq M$ for all $g \in \mathcal{J}(3)$ where $M$ is a constant.  Consider $f_{(m,n,N)} = (D_\alpha \circ D_{\beta(m,n)})^{N+1} $,  as defined in the paragraph directly preceding Lemma~\ref{thematrix}.
Since $$\omega_k^{4^{j}} = \left\{ \begin{array}{cc} i & \text{if } j=k-1  \\ 1 & \text{if } j \geq k  \end{array} \right. ,$$by Lemmas~\ref{thematrix} and \ref{calc_sig},

$$ \rho_{k}(f_{(4^j -1, 4^j +1, 2N_0)})  = \left\{ \begin{array}{cc}  -2(2N_0 +1) & \text{if } j=k-1  \\ -2  & \text{if } j\geq k  \\ \end{array} \right. .$$
Therefore, when $j=k_1-1$, we have
\begin{align*} M &\geq \left|\delta(f_{(4^{k_1-1} -1, 4^{k_1-1} +1, 2N_0)})\right|  \\ &= \left|\displaystyle\sum_{i=1}^l a_i \rho_{k_i} (f_{(4^{k_1-1} -1, 4^{k_1-1} +1, 2N_0)}\right|\\
&= \left| a_1 2(2N_0+1) + \displaystyle\sum_{i=2}^l 2 a_i \right|
\end{align*}
Dividing by $2\left| a_1 \right|$ we see that $\left| (2N_0+1) + \displaystyle\sum_{i=2}^l 2 a_i \right| \leq M/(2\left| a_1\right|)$. However, since all the $a_i$ and $M$ are fixed and $N_0$ can be chosen to be arbitrarily large, this is a contradiction.

\end{proof}

Note that in the above proof, $\beta$ depends on the linear combination. We have actually shown that any particular non-trivial linear combination of the $\rho_k$ is an  unbounded function on the cyclic subgroup generated by $D_\alpha\circ D_\beta$, for suitably chosen $\beta$.

\begin{theorem}\label{thm:infgenbounded} For $g\geq 2$, $\{\delta(\rho_k)\}$ is a linearly independent subset of $H^2_b(\mathcal{J}(3);\mathbb{R})$, the second bounded cohomology of $\mathcal{J}(3)$.
\end{theorem}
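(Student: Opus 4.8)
The plan is to deduce this from Theorem~\ref{thm:infgenquasi} together with the exact sequence
$$
0\to H^1(\mathcal{K}_g;\mathbb{R})\to \widehat{Q}(\mathcal{K}_g)\overset{\delta}\longrightarrow H^2_b(\mathcal{K}_g;\mathbb{R})\to H^2(\mathcal{K}_g;\mathbb{R})
$$
recalled in the introduction. Since the kernel of $\delta$ is exactly $H^1(\mathcal{K}_g;\mathbb{R})=\mathrm{Hom}(\mathcal{K}_g,\mathbb{R})$, the classes $\{\delta(\rho_k)\}$ are linearly independent in $H^2_b(\mathcal{K}_g;\mathbb{R})$ if and only if the $\rho_k$ are linearly independent in $\widehat{Q}(\mathcal{K}_g)$ \emph{modulo homomorphisms}. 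Theorem~\ref{thm:infgenquasi} already shows that no nontrivial combination $\sum a_i\rho_{k_i}$ is bounded; the genuinely new content is to rule out that such a combination is a homomorphism plus a bounded function. Replacing each $\rho_{k_i}$ by its homogenization $\overline{\rho}_{k_i}(f)=\lim_n \rho_{k_i}(f^n)/n$ (which differs from $\rho_{k_i}$ by a bounded function and hence has the same class in $H^2_b$), it suffices to prove: if $\phi=\sum_i a_i\overline{\rho}_{k_i}$ is a homomorphism $\mathcal{K}_g\to\mathbb{R}$, then all $a_i=0$.

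First I would record the values of these homogenizations on the computable family. Writing $g_s=D_\alpha\circ D_{\beta(m_s,n_s)}$ with $m_s=4^{s-1}-1$ and $n_s=4^{s-1}+1$, homogenizing equation \eqref{eq} gives $\overline{\rho}_\omega(g_s)=\lim_{N\to\infty}\big(\mathrm{signature}(C_{(m_s,n_s,N)}(\omega))-2(N+1)\big)/(N+1)$. Because $C_{(m_s,n_s,N)}(\omega)$ is block tridiagonal/bidiagonal and depends on $\omega$ only through $G_{(m_s,n_s)}(\omega)=|\omega^{4^{s-1}}-1|^2$, a Szeg\H{o}-type limit theorem computes this asymptotic signature for \emph{every} $\omega$, extending Lemma~\ref{calc_sig} (which treats only $\omega^{4^{s-1}}\in\{1,\pm i\}$). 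One finds a triangular pattern: $\overline{\rho}_{k_i}(g_{k_j})=-2$ when $i=j$, while $\overline{\rho}_{k_i}(g_{k_j})=0$ when $k_i<k_j$, since then $\omega_{k_i}^{4^{k_j-1}}=1$, so $G_{(m_{k_j},n_{k_j})}(\omega_{k_i})=0$ and the signature is exactly $2N$. Thus the matrix $\big(\overline{\rho}_{k_i}(g_{k_j})\big)$ is lower triangular with diagonal entries $-2$, hence invertible; in particular the $\overline{\rho}_{k_i}$ are linearly independent as functions on $\mathcal{K}_g$.

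The main obstacle, and the step demanding the most care, is that the elements $g_{k_j}$ need not lie in the commutator subgroup, so invertibility of this matrix does not by itself exclude a homomorphism summand: a homomorphism $h$ may take arbitrary values $h(g_{k_j})=h(D_\alpha)+h(D_{\beta_{k_j}})$. To circumvent this I would pair the $\overline{\rho}_{k_i}$ against elements of $[\mathcal{K}_g,\mathcal{K}_g]$, on which every homomorphism vanishes, and invoke the standard fact that a homogeneous quasimorphism is a homomorphism precisely when it vanishes on the commutator subgroup \cite{Cal2}. The natural candidates are the commutators $c_j=[D_\alpha,D_{\beta_{k_j}}]$, or the products $D_{\beta_{k_j}}\circ D_{\beta_{k_{j'}}}^{-1}$, which should lie in $[\mathcal{K}_g,\mathcal{K}_g]$ since the $\beta(m,n)$ are genus-one separating curves and should be mutually Torelli-equivalent. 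Re-running the handle-attachment and Fox-calculus computation of Lemma~\ref{lem:maincalc} for such elements and homogenizing via the same Szeg\H{o} estimate should again produce an invertible pairing matrix against $\{\overline{\rho}_{k_i}\}$; then $\phi|_{[\mathcal{K}_g,\mathcal{K}_g]}=0$ forces all $a_i=0$. Verifying that these elements are indeed null-homologous in $\mathcal{K}_g$, and that the triangular, invertible structure survives the passage to commutators, is where the real work lies. Alternatively, one may first restrict to the subgroup $\langle D_\alpha,D_{\beta_{k_1}},\dots,D_{\beta_{k_l}}\rangle$, which a ping-pong argument should show is free; there $H^2_b\cong \widehat{Q}/H^1$ and the same commutator pairing applies.
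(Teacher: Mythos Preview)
Your framework is correct: you correctly reduce the problem, via the exact sequence, to showing that no nontrivial combination $\sum a_i\rho_{k_i}$ equals a homomorphism plus a bounded function, and you correctly identify that the triangular matrix $\big(\overline{\rho}_{k_i}(g_{k_j})\big)$ (which is essentially a restatement of the proof of Theorem~\ref{thm:infgenquasi}) does not by itself rule out a homomorphism summand. However, the step you call ``where the real work lies'' is genuinely incomplete. Your proposal to pair against commutators $[D_\alpha,D_{\beta_{k_j}}]$ would require an entirely new Fox-calculus and intersection-form computation for words that are no longer of the simple form $(D_\alpha D_\beta)^{N+1}$, and the Szeg\H{o}-type asymptotics you invoke are not established anywhere. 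Your alternative claim that $D_{\beta_{k_j}}D_{\beta_{k_{j'}}}^{-1}\in[\mathcal{K}_g,\mathcal{K}_g]$ is unjustified: ``mutually Torelli-equivalent'' genus-one separating curves may be conjugate only by elements of $\mathcal{I}$, not of $\mathcal{K}_g$, so the resulting expression is a commutator in $\mathcal{I}$, not in $\mathcal{K}_g$. As written, the proposal is a plan with a substantial gap at its crucial point.

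The paper's proof avoids all of this with a much simpler observation (Lemma~\ref{lem:boundedoncyclic}): for a \emph{single} Dehn twist $D=D_\alpha$ or $D=D_\beta$, the set $\{\rho_k(D^M):M\in\Z\}$ is bounded. Indeed, rerunning the computation of Lemma~\ref{lem:maincalc} with only the $\alpha$-curves (or only the $\beta$-curves) gives a twisted intersection matrix equal to $A$, with signature $N$, so $\rho_\omega(D_\alpha^{N+1})=N-(N+1)=-1$ for every $\omega\neq 1$. Now if $\sum a_i\rho_{k_i}=\phi+\delta$ with $\phi$ a homomorphism and $\delta$ bounded, evaluating on $D^M$ gives a bounded left side but $M\phi(D)+\delta(D^M)$ on the right, forcing $\phi(D_\alpha)=\phi(D_\beta)=0$. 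Hence $\phi$ vanishes on the subgroup $\langle D_\alpha,D_\beta\rangle$, so on that subgroup $\sum a_i\rho_{k_i}$ is bounded. But this contradicts the proof of Theorem~\ref{thm:infgenquasi}, which showed the combination is unbounded already on the cyclic subgroup generated by $D_\alpha D_\beta$ (with $\beta$ chosen appropriately). This route requires no new commutator computations, no Szeg\H{o} asymptotics, and no claims about $H_1(\mathcal{K}_g)$.
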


\begin{proof} Recall the key exact sequence:
$$
0\to H^1(\mathcal{J}(3);\mathbb{R})\to \widehat{Q}(\mathcal{J}(3))\overset{\delta}\longrightarrow H^2_b(\mathcal{J}(3);\mathbb{R})\to H^2(\mathcal{J}(3);\mathbb{R}).
$$
From this we deduce that we must show that no non-trivial linear combination of the $\rho_k$ is equal to a homomorphism plus a bounded function. As above, suppose that
$$\displaystyle\sum_{i=1}^l a_i \rho_{k_i} =\phi+ \delta$$
where $a_i\neq 0$, $\phi$ is a homomorphism and $\delta$ is a bounded function.

\begin{lemma}\label{lem:boundedoncyclic} Let $D$ denote $D_\alpha$ or $D_\beta$  for any $\alpha, \beta$. For each $k$, $\{\rho_k(D^M)~|~ M\in \Z\}$  is a bounded set.
\end{lemma}

First we will show that Lemma~\ref{lem:boundedoncyclic} implies Theorem~\ref{thm:infgenbounded}. It follows directly from the lemma that
$$\displaystyle\sum_{i=1}^l a_i \rho_{k_i}(D^M)$$
is a bounded set (only $M$ is varying here). On the other hand
$$\phi(D^M)+ \delta(D^M)=M\phi(D)+ \delta(D^M)$$
is an unbounded set unless $\phi(D)=0$. Therefore we may assume that $\phi(D_\alpha)=0$ and $\phi(D_\beta)=0$ and hence, since $\phi$ is a homomorphism, that $\phi$ vanishes on the subgroup generated by $D_\alpha$ and $D_\beta$. It would follow that, on the subgroup generated by $D_\alpha$ and $D_\beta$,
$$\displaystyle\sum_{i=1}^l a_i \rho_{k_i} =\delta,$$
which is a bounded function. In particular it is a bounded function on the cyclic subgroup generated by $D_\alpha\circ D_\beta$. However, after choosing $\beta$ suitably, this contradicts the proof of Theorem~\ref{thm:infgenquasi}.

\begin{proof}[Proof of Lemma~\ref{lem:boundedoncyclic}] In brief, we can follow the proof of Lemma~\ref{lem:maincalc} and just ignore the $\beta$ curves (respectively the $\alpha$ curves). Specifically let $f=D_\alpha^{N+1}$. Consider the set of $N+1$ curves in $\Sigma \times \left[0,1\right] \subset N_{\text{id}}$
$$\mathcal{S}_\alpha = \{  \alpha \times \{(2i + 1)/(2N+2)\}~|~ 0\leq i \leq N \}.$$
Let $X$ be the $4$-manifold obtained by attaching $N+1$ $2$-handles to $N_{\text{id}} \times I$ along the curves in $\mathcal{S}_\alpha \times \{1\} \subset N_{\text{id}} \times \{1\}$, each with $+1$ framing.  Then $\partial X = \overline{N}_{\text{id}} \sqcup N_f.$ 
Let $W = X \cup \overline{E}$ where $E$ is the boundary connected sum of $2g$ copies of $S^1 \times B^3$. Just as in the proof of Lemma~\ref{lem:maincalc}, the coefficient system extends to $W$ so
\begin{equation}\rho_\omega(f) = \sigma(W,\mathbb{C}_\Phi) - \sigma_0(W).
\end{equation}
As above $\sigma_0(W)=N+1$. Now we consider $H_2(W;\mathbb{C}_\Phi)$. Since, in the proof of Lemma~\ref{lem:maincalc}, we only slid $\alpha$ curves over other $\alpha$ curves, we see that a matrix for the twisted intersection form on $W$ is given by ignoring, in the matrix of \eqref{int_matrix}, the rows and columns corresponding to the $\beta$ curves. Thus the twisted intersection form on $W$ is given by the matrix $A$ whose signature is just its ordinary signature, which is $N$. Hence
$$
\rho_\omega(D_\alpha^{N+1})=N-(N+1)=-1,
$$
for any $\omega$ of norm $1$ ($\omega\neq 1$). The proof for the $D_\beta$ is the same. This completes the proof of Lemma~\ref{lem:boundedoncyclic}.
\end{proof}

\end{proof}

The proofs above indicate that the same will hold for any subgroup of $\mathcal{K}_g$ containing two Dehn twists on sufficiently different bounding curves.

\section{More on the $\rho$ and $\sigma$-invariants as elements of group cohomology}\label{sec:groupcohoaspects}

The question arises as to whether or not, for a fixed $H\lhd F\equiv \pi_1(\Sigma)$, the higher-order $\rho$-invariants (as $\psi$ varies) lift to classes in $H^1(J(H);\R)$; and whether or not the higher-order signature $2$-cocycles yield non-zero classes in $H^2(J(H);\Z)$. At this time we are only able to comment on these questions in the cases where the unitary representation is finite-dimensional. So, for the remainder of this section we assume that $\psi:F/H\to U(n)$ is a finite-dimensional unitary representation.  In this case $[\sigma_\psi]\in H^2(J(H);\Z)$ by Corollary~\ref{cor:sigcohomologyclass}. The first question we address is: For which $H$ and $\psi$ are these classes non-zero? We abbreviate $J(H)$ by $J$. Note that, in this case, by Proposition~\ref{prop:rhoissigdefect}:

\begin{lemma}\label{lem:rhomodZ} If $\psi$ is a finite-dimensional representation then
the reduction of $\rho_\psi$ mod $\Z$ is a homomorphism $\overline{\rho}_\psi:J\to \R/\Z$ and hence represents a class, denoted $[\overline{\rho}]$ in $H^1(J;\R/\Z)$.
\end{lemma}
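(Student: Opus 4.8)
The plan is to read the homomorphism property directly off the signature-defect formula of Proposition~\ref{prop:rhoissigdefect}, exploiting that in the finite-dimensional case the cocycle $\sigma_\psi$ is \emph{integer}-valued. The only real content is the integrality, which is exactly where the hypothesis $\dim(\mathcal{H})=n<\infty$ enters.

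First I would recall that by Definition~\ref{defcocycle}, when $\psi$ is $n$-dimensional the cochain $\sigma_\psi(f,g)=\sigma(W(f,g);\widetilde{\psi})-n\,\sigma(W(f,g))$ is a difference of ordinary signatures of the $4$-manifold $W(f,g)$ and hence lies in $\Z$. Combining this with Proposition~\ref{prop:rhoissigdefect}, for all $f,g\in J$ we have
$$
\rho_\psi(fg)=\rho_\psi(f)+\rho_\psi(g)-\sigma_\psi(f,g),\qquad \sigma_\psi(f,g)\in\Z.
$$
Reducing this identity modulo $\Z$ annihilates the term $\sigma_\psi(f,g)$, yielding $\overline{\rho}_\psi(fg)=\overline{\rho}_\psi(f)+\overline{\rho}_\psi(g)$ in $\R/\Z$. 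Since $\R/\Z$ is abelian, this is precisely the assertion that $\overline{\rho}_\psi\colon J\to\R/\Z$ is a group homomorphism.

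Finally I would invoke the standard identification $H^1(J;A)\cong\mathrm{Hom}(J,A)$ for a trivial coefficient module $A$ (here $A=\R/\Z$), which is immediate from the coboundary formula~\eqref{eq:coboundary}: a $1$-cochain $\rho\colon J\to A$ satisfies $\delta\rho(f,g)=\rho(g)-\rho(fg)+\rho(f)=0$ exactly when $\rho$ is a homomorphism, and there are no nonzero $1$-coboundaries since $A$ is a trivial module. Hence $\overline{\rho}_\psi$ determines a well-defined class $[\overline{\rho}]\in H^1(J;\R/\Z)$.

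There is essentially no obstacle here; the argument is formal once integrality is in hand. The single point deserving emphasis is that the finite-dimensionality is genuinely needed: in the $\ell^{(2)}$ case $\sigma_\psi$ takes values in $\R$ rather than $\Z$, so reduction mod $\Z$ does not kill the defect term and the homomorphism property fails, which is why the lemma is stated only for finite-dimensional $\psi$.
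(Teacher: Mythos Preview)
Your proof is correct and is exactly the argument the paper intends: the paper simply records the lemma as an immediate consequence of Proposition~\ref{prop:rhoissigdefect} together with the integrality of $\sigma_\psi$ in the finite-dimensional case, which is precisely what you spell out.
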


Therefore the second question we address is: For which $H$ and $\psi$ are these classes non-zero, and when do they lift to $H^1(J;\R)$? It is enlightening to consider the following subgroup:

\begin{definition}\label{def:B(H)} Let $B(H)\lhd J(H)$ denote the normal subgroup
consisting of those classes $f\in J(H)$ for which the pair $(N_f,\phi_f:\pi_1(N_f)\to F/H)$ is the boundary of some $(W,\tilde{\phi}_f:\pi_1(W)\to F/H)$, where $W$ is a compact oriented $4$-manifold.
\end{definition}

The important observation is that $\rho_\psi$ is integer-valued when restricted to $B(H)$, by Theorem~\ref{thm:APS}. Hence $\overline{\rho}:J\to \R/\Z$ is zero when restricted to $B(H)$ and so $\overline{\rho}$ descends to a well-defined homomorphism on $J/B$ (we abbreviate $B(H)$ by $B$) denoted $\widetilde{\overline{\rho}}$. Consider the following commutative diagram.  The rows are pieces of Bockstein exact sequences.

$$
\begin{diagram}\label{diag:}\dgARROWLENGTH=1.2em
\node{H^1(J/B;\R)}\arrow{e,t}{\tilde{p}}\arrow{s,r}{\pi_1}\node{H^1(J/B;\R/\Z)}\arrow{s,r}{\pi_2}\arrow{e,t}{\tilde{\beta}}\node{H^2(J/B;\Z)}\arrow{s,r}{\pi_3}\arrow{e,t}{\tilde{j}}
\node{H^2(J/B;\R)}
\\
\node{H^1(J;\R)}\arrow{e,t}{p}\node{H^1(J;\R/\Z)}\arrow{e,t}{\beta}\node{H^2(J;\Z)}\arrow{e,t}{j^*}
\node{H^2(J;\R)}
\end{diagram}
$$
We have $[\overline{\rho}]=\pi_2([\widetilde{\overline{\rho}}])$ as observed above. It is not difficult to check that $\beta([\overline{\rho}])=[\delta(\rho)]=[\sigma]$ as expected. Now, using the diagram, we come to our first useful observation.

\begin{lemma}\label{lem:torsioninimage} The torsion classes $[\sigma_\psi]$ lie in the image of the map:
$$
\pi_3:H^2(J/B;\Z)\to H^2(J;\Z).
$$
\end{lemma}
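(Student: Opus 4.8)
The plan is to prove this by a straightforward chase through the commutative diagram displayed just above the statement, using the two facts already recorded there: that $[\overline{\rho}]=\pi_2([\widetilde{\overline{\rho}}])$ and that $\beta([\overline{\rho}])=[\sigma_\psi]$. First I would pin down the class $[\widetilde{\overline{\rho}}]\in H^1(J/B;\R/\Z)$ that lives upstairs. By Theorem~\ref{thm:APS} the invariant $\rho_\psi$ takes integer values on $B=B(H)$, so the reduction $\overline{\rho}_\psi:J\to\R/\Z$ of Lemma~\ref{lem:rhomodZ} vanishes on $B$ and descends to a homomorphism $\widetilde{\overline{\rho}}:J/B\to\R/\Z$, i.e.\ to a well-defined class $[\widetilde{\overline{\rho}}]\in H^1(J/B;\R/\Z)$. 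Since the vertical maps $\pi_i$ are the inflation maps induced by the quotient $J\twoheadrightarrow J/B$, pulling $\widetilde{\overline{\rho}}$ back along this quotient recovers $\overline{\rho}$, which is exactly the identity $\pi_2([\widetilde{\overline{\rho}}])=[\overline{\rho}]$.

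Next I would push $[\widetilde{\overline{\rho}}]$ through the top Bockstein $\tilde{\beta}$ of the coefficient sequence $0\to\Z\to\R\to\R/\Z\to 0$ to produce a candidate preimage $\tilde{\beta}([\widetilde{\overline{\rho}}])\in H^2(J/B;\Z)$. The crux is the commutativity of the right-hand square, namely $\pi_3\circ\tilde{\beta}=\beta\circ\pi_2$, which is precisely the naturality of the Bockstein connecting homomorphism with respect to the inflation maps $\pi_i$. Combining this with the two recorded facts gives
\[
\pi_3\bigl(\tilde{\beta}([\widetilde{\overline{\rho}}])\bigr)=\beta\bigl(\pi_2([\widetilde{\overline{\rho}}])\bigr)=\beta([\overline{\rho}])=[\sigma_\psi],
\]
so $[\sigma_\psi]$ lies in the image of $\pi_3$, as claimed.

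The argument is short because the real work has already been done in setting up $B(H)$, the homomorphism $\widetilde{\overline{\rho}}$, and the diagram. The only points that genuinely require care, rather than calculation, are (i) confirming that $\overline{\rho}$ truly descends to $J/B$ — this uses that $B$ is normal in $J$ and that $\rho_\psi|_B$ is $\Z$-valued via Theorem~\ref{thm:APS} — and (ii) checking that the displayed vertical maps are the inflation homomorphisms of $J\twoheadrightarrow J/B$, so that naturality of the Bockstein legitimately yields $\pi_3\circ\tilde{\beta}=\beta\circ\pi_2$. Neither is a serious obstacle; the content of the lemma is entirely captured by the observation that $[\sigma_\psi]=\beta([\overline{\rho}])$ factors through the quotient by $B$.
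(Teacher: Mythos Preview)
Your proof is correct and is exactly the diagram chase the paper intends: the paper records $[\overline{\rho}]=\pi_2([\widetilde{\overline{\rho}}])$ and $\beta([\overline{\rho}])=[\sigma_\psi]$, and the lemma follows immediately from commutativity of the right-hand square, $\pi_3\circ\tilde{\beta}=\beta\circ\pi_2$, which is what you carry out.
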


Now let $K(F/H,1)$ denote an Eilenberg-Maclane space of type $(F/H,1)$ and let $\Omega_3(K(F/H,1))$ denote the oriented bordism group  ~\cite[p.216]{DaKi}. Furthermore, observe that there is a well-defined map:
$$
\eta_H: J(H)\rightarrow \Omega_3(K(F/H,1))\cong H_3(F/H;\Z),
$$
given by $\eta_H(f)=(\phi_f)_*([N_f])$, the image of the fundamental class of $N_f$ under the  map induced by $\phi_f$. This was considered by Morita and Heap in the case that $H$ is a term of the lower central series ~\cite{Mor4,Heap}. In particular the proof of Heap's ~\cite[Theorem 4]{Heap} is very general and shows that our $\eta_H$ is a homomorphism. Note that $B(H)$ is (by definition) the kernel of $\eta_H$ so
$$
\eta_H: J/B \hookrightarrow H_3(F/H;\Z)
$$
is a monomorphism.

\begin{proposition}\label{prop:cocycleszero} If $H_3(F/H;\Z)$ is torsion-free (for example if $H$ is a term of the lower central series of $F$ ~\cite[Corollary6.5]{IgOrr}) and $\psi$ is a finite-dimensional representation then the signature cocycles are null-homologous, i.e.  $[\sigma_\psi]=0$.
\end{proposition}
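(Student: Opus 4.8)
The plan is to trace the torsion class $[\sigma_\psi]$ through the Bockstein diagram displayed just before the statement, and then to exploit the fact that $J/B$ (where $B=B(H)$, $J=J(H)$) is an abelian group whose second integral cohomology has no torsion. First I would record the two identities already established in the discussion preceding the proposition: $[\sigma_\psi]=\beta([\overline{\rho}_\psi])$ and $[\overline{\rho}_\psi]=\pi_2([\widetilde{\overline{\rho}}])$, where $\overline{\rho}_\psi:J\to\R/\Z$ is the homomorphism of Lemma~\ref{lem:rhomodZ} and $\widetilde{\overline{\rho}}$ is its descent to $J/B$ (legitimate because $\rho_\psi$ is $\Z$-valued on $B$ by Theorem~\ref{thm:APS} together with the definition of $B$). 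Combining these with the commutativity of the right-hand square of the diagram yields
$$
[\sigma_\psi]=\beta\big(\pi_2([\widetilde{\overline{\rho}}])\big)=\pi_3\big(\tilde{\beta}([\widetilde{\overline{\rho}}])\big),
$$
so it suffices to prove that $\tilde{\beta}([\widetilde{\overline{\rho}}])=0$ in $H^2(J/B;\Z)$.

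The second step is to observe that $\tilde{\beta}([\widetilde{\overline{\rho}}])$ is automatically a torsion class. Indeed, by exactness of the top Bockstein row one has $\tilde{j}\circ\tilde{\beta}=0$, so $\tilde{\beta}([\widetilde{\overline{\rho}}])$ lies in the kernel of $H^2(J/B;\Z)\to H^2(J/B;\R)$, which is precisely the torsion subgroup. (This is of course consistent with $[\sigma_\psi]$ itself being torsion, by Corollary~\ref{cor:sigcohomologyclass}.)

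The third step is to show that $H^2(J/B;\Z)$ is torsion-free, which then forces $\tilde{\beta}([\widetilde{\overline{\rho}}])=0$ and finishes the argument. Here I would use the injective homomorphism $\eta_H:J/B\hookrightarrow H_3(F/H;\Z)$ into an abelian group: since $H_3(F/H;\Z)$ is assumed torsion-free, $J/B$ is a torsion-free abelian group. In the guiding examples (for instance $H$ a term of the lower central series, by \cite[Corollary 6.5]{IgOrr}) the group $H_3(F/H;\Z)$ is finitely generated, so $J/B$ is finitely generated and torsion-free, hence free abelian; then by the universal coefficient theorem and $\mathrm{Ext}^1(\text{free},\Z)=0$ one has $H^2(J/B;\Z)\cong\mathrm{Hom}(\wedge^2(J/B),\Z)$, which is torsion-free. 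Consequently $\tilde{\beta}([\widetilde{\overline{\rho}}])=0$ and $[\sigma_\psi]=\pi_3(0)=0$.

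The main obstacle is exactly this last step: upgrading `$H_3(F/H;\Z)$ torsion-free' to `$H^2(J/B;\Z)$ torsion-free'. The implication is immediate once $J/B$ is free abelian, and freeness is automatic under finite generation, which holds in all of our principal cases. One must be cautious, however, because for a non-finitely-generated torsion-free abelian group $A$ the group $H^2(A;\Z)$ can acquire torsion through $\mathrm{Ext}^1(A,\Z)$ (e.g. $A=\Z[1/p]$). Thus the argument genuinely uses a hypothesis guaranteeing that $J/B$ is free—not merely that $H_3(F/H;\Z)$ is torsion-free—and I would make this structural point explicit where the proof invokes torsion-freeness.
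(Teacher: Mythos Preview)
Your proposal is correct and follows essentially the same route as the paper: use the Bockstein diagram and the factorization $[\sigma_\psi]=\pi_3\circ\tilde{\beta}([\widetilde{\overline{\rho}}])$, then argue that $H^2(J/B;\Z)$ is torsion-free because $J/B$ embeds in the torsion-free abelian group $H_3(F/H;\Z)$, forcing $\tilde{\beta}$ to vanish. Your added caution about needing $J/B$ to be \emph{free} abelian (not merely torsion-free) is well taken---the paper simply asserts ``$J/B$ torsion-free abelian $\Rightarrow$ $H^2(J/B;\Z)$ torsion-free'' without comment, and as you observe this can fail without finite generation (e.g.\ $\mathrm{Ext}^1(\Z[1/p],\Z)$ has torsion); in the motivating lower-central-series examples $H_3(F/H;\Z)$ is finitely generated, so the issue does not arise there.
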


\begin{proof} If $H_3(F/H;\Z)$ is torsion-free then $J/B$ is a torsion-free abelian group. Thus $H^2(J/B;\Z)$ is torsion-free, so $\tilde{j}$ is injective. It follows that $\tilde{\beta}$ is the zero map. Thus
$$
\sigma=\beta\circ \pi_2([\widetilde{\overline{\rho}}])=\pi_3\circ \tilde{\beta}([\widetilde{\overline{\rho}}])=0.
$$
\end{proof}

\begin{proposition}\label{prop:reducedrhofingen} If $H_3(F/H;\Z)$ is  finitely-generated and free abelian (for example if $H$ is a term of the lower central series of $F$) and $\psi$ is a finite-dimensional representation then the classes $[\overline{\rho}]\in H^1(J;\R/\Z)$ lift to $H^1(J;\R)$ and form a (finitely-generated) subgroup of the image of
$$
\pi_1:H^1(J/B;\R)\to H^1(J;\R).
$$
\end{proposition}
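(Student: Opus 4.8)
The plan is to run a diagram chase through the Bockstein square displayed above, reducing everything to the single structural fact that $J/B$ is finitely-generated free abelian. First I would record this fact: by the discussion preceding the statement, $\eta_H\colon J/B\hookrightarrow H_3(F/H;\Z)$ is a monomorphism, and a subgroup of a finitely-generated free abelian group is again finitely-generated free abelian. Hence $J/B\cong\Z^s$ with $s\le\rk H_3(F/H;\Z)<\infty$; this is the only place the hypothesis on $H_3(F/H;\Z)$ is used.

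Next I would show that the top reduction map $\tilde p\colon H^1(J/B;\R)\to H^1(J/B;\R/\Z)$ is surjective. Since $J/B\cong\Z^s$, the integral cohomology $H^2(J/B;\Z)\cong\Lambda^2\Z^s$ is free abelian, so $\tilde j$ is injective, exactly as in the proof of Proposition~\ref{prop:cocycleszero}. Exactness of the top Bockstein row then forces $\tilde\beta=0$, and since $\operatorname{im}\tilde p=\ker\tilde\beta$, the map $\tilde p$ is surjective.

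Now for the lift itself. Recall from the discussion above that $[\overline{\rho}]=\pi_2([\widetilde{\overline{\rho}}])$ with $[\widetilde{\overline{\rho}}]\in H^1(J/B;\R/\Z)$. Using the surjectivity just established, choose $\lambda\in H^1(J/B;\R)$ with $\tilde p(\lambda)=[\widetilde{\overline{\rho}}]$ and set $\Lambda:=\pi_1(\lambda)\in H^1(J;\R)$. Commutativity of the left-hand square of the diagram gives
$$
p(\Lambda)=p(\pi_1(\lambda))=\pi_2(\tilde p(\lambda))=\pi_2([\widetilde{\overline{\rho}}])=[\overline{\rho}],
$$
so $\Lambda$ is a genuine real lift of $[\overline{\rho}]$, and by construction it lies in $\operatorname{im}(\pi_1)$. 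Since $J\onto J/B$ is surjective, $\pi_1$ is injective, so $\operatorname{im}(\pi_1)\cong H^1(J/B;\R)=\operatorname{Hom}(J/B,\R)\cong\R^s$ is a real vector space of finite dimension $s$. Because $J/B$ is finitely generated, the lifts obtained this way therefore generate a finitely-generated subgroup of $\operatorname{im}(\pi_1)$, as asserted.

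The two exactness arguments are routine; the one point deserving care is the bookkeeping in the Bockstein square, and in particular the verification that $\tilde\beta=0$ on the $J/B$ level. This is where the free-abelian hypothesis is consumed, and it is exactly the obstruction to the descended class $[\widetilde{\overline{\rho}}]$ lifting before being pushed forward to $J$; once it vanishes, the remainder of the argument is a formal chase. One should also be attentive to the precise meaning of ``finitely generated'' here: the lifts land in the finite-dimensional space $\operatorname{im}(\pi_1)$, and it is this finite rank of $J/B$ (not any rationality of the $\rho$-invariants) that yields the conclusion.
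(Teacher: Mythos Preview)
Your argument is correct and follows essentially the same route as the paper's proof: use the torsion-free hypothesis to conclude $\tilde\beta=0$ (equivalently, $\tilde p$ surjective), lift $[\widetilde{\overline{\rho}}]$ upstairs, and push down via $\pi_1$. The paper's proof is terser—it simply cites the proof of Proposition~\ref{prop:cocycleszero} for $\tilde\beta=0$ and for $\beta([\overline{\rho}])=0$, then notes that finite generation of $H_3(F/H;\Z)$ forces $J/B$ and hence $H^1(J/B;\R)$ to be finitely generated—but the underlying diagram chase is identical to yours.
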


\begin{proof} By the proof of Proposition~\ref{prop:cocycleszero}, $\tilde{\beta}=0$ and $\beta([\overline{\rho}])=0$ so any $[\overline{\rho}]$ lifts to $H^1(J;\R)$ and lies in the image of $\pi_1$. If $H_3(F/H;\Z)$ is  finitely-generated then so are $J/B$ and $H^1(J/B;\R)$.
\end{proof}

\begin{remark}\label{rem:1}If $H=[F,F]$ and $J=\mathcal{I}$, then $B=\mathcal{K}$ and
$$
\eta_H:J/B\hookrightarrow H_3(Z^{2g})\cong \bigwedge^3(Z^{2g})
$$
is identifiable with the Johnson homomorphism (see, for example, ~\cite[Theorem 16]{Heap}). It is also known that the map $\pi_1$ above is an isomorphism in this case, since $[\mathcal{I},\mathcal{I}]$ is the radical of $\mathcal{J}$ ~\cite{DJ1}. Hence
$$
H_1(\I;\mathbb{R}/\Z)\subset \bigwedge^3\left((\mathbb{R}/\Z)^{2g}\right)
$$
with known image (corresponding to the known image of $\eta_H$). It would be interesting to know if our $\overline{\rho}_\psi$ in this case span the entire group $H^1(\I;\mathbb{R}/\Z)$.
\end{remark}

\section{Further Methods of Calculation and Relations with Links}\label{sec:calc method}

Suppose $\partial \Sigma$ is connected and $\Sigma '\subset \Sigma$ is a connected compact sub-surface with possibly multiple boundary components.  Then the inclusion $i$ induces a homomorphism $\theta: \mathcal{M}(\Sigma ')\to \mathcal{M}(\Sigma)$, extending by the identity. We assume that one boundary component of $\Sigma '$ intersects $\partial\Sigma$ at the base point. We also assume that, except at the basepoint,  each boundary component of $\Sigma'$ either coincides with a boundary component of $\Sigma$ or is disjoint from $\partial \Sigma$. Suppose $H'$ is a characteristic subgroup of $F'=\pi_1(\Sigma')$ and $H$ is a characteristic subgroup of $F=\pi_1(\Sigma)$ such that $i_*(H')\subset H$.  Fix a unitary representation $\psi: F/H \rightarrow U(\mathcal{H})$ as always. Then there is an induced unitary representation
$$
\psi':\pi_1(\Sigma')/H'\overset{i_*}{\to}F/H\rightarrow U(\mathcal{H}).
$$
If $g\in J(H')$ then one can easily check that $\theta(g)\in J(H)$. Therefore there are induced representations on $\pi_1(N_{\theta(g)})$ and $\pi_1(N_g)$ that factor through $\psi$ and $\psi'$. Hence both  $\rho_\psi(\theta(g))$ and $\rho_{\psi'}(g)$ are defined. The following is then not surprising.

\begin{proposition}\label{prop:subsurfacerestrict} Given $\psi$, $\Sigma'$ and $g$ as above, if $i_*:H_1(\Sigma';\Z)\to H_1(\Sigma;\Z)$ is injective then
$$
\rho_\psi(\theta(g))=\rho_{\psi'}(g).
$$
\end{proposition}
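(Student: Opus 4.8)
The plan is to exhibit an explicit $4$-dimensional bordism between the two pairs $(N_g,\psi'_g)$ and $(N_{\theta(g)},\psi_{\theta(g)})$ over which the coefficient system extends and whose ordinary and twisted signatures both vanish; by Theorem~\ref{thm:APS} this forces $\rho_{\psi'}(g)=\rho_\psi(\theta(g))$. Write $\Sigma''=\overline{\Sigma\setminus\Sigma'}$ and let $\gamma=\gamma_1\sqcup\cdots\sqcup\gamma_k$ denote the boundary curves of $\Sigma'$ lying in the interior of $\Sigma$ (the curves along which $\Sigma'$ and $\Sigma''$ meet). Since $\theta(g)$ is $g$ on $\Sigma'$ and the identity on $\Sigma''$, the mapping torus splits as $M_{\theta(g)}=M_g\cup_{\gamma\times S^1}(\Sigma''\times S^1)$, and after the longitudinal fillings one checks that $N_g$ and $N_{\theta(g)}$ share a common codimension-zero piece $Q$ (with $\partial Q=\gamma\times S^1$): the manifold $N_g$ is $Q$ with the tori $\gamma\times S^1$ capped by solid tori $\gamma\times D^2$, while $N_{\theta(g)}$ is $Q$ with those same tori capped instead by $R:=(\Sigma''\times S^1)\cup(\text{solid tori along }\partial\Sigma\cap\Sigma'')$.

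The key observation is that $\partial(\Sigma''\times D^2)=(\gamma\times D^2)\cup_{\gamma\times S^1}R$, so that $\Sigma''\times D^2$ is precisely a bordism, rel $\gamma\times S^1$, from the solid-torus fillings to $R$. First I would therefore form
$$
Z=(N_g\times[0,1])\ \cup_{\gamma\times D^2}\ (\Sigma''\times D^2),
$$
attaching $\Sigma''\times D^2$ to the solid-torus fillings $\gamma\times D^2\subset N_g\times\{1\}$; a direct inspection of the boundary shows $\partial Z=N_{\theta(g)}\sqcup(-N_g)$. Next I would verify that the two coefficient systems assemble into a single $\widetilde{\Phi}:\pi_1(Z)\to F/H$: by van Kampen $\pi_1(Z)$ is generated by $\pi_1(N_g)$ and $\pi_1(\Sigma'')$ amalgamated over the $\pi_1(\gamma_i\times S^1)$, and on each overlap curve $\gamma_i\subset\Sigma'\cap\Sigma''$ the two prescriptions agree because $i_*(H')\subset H$ makes the two routes $F'\to F'/H'\to F/H$ and $F'\to F\to F/H$ coincide. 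Composing with $\psi$ gives the extension $\widetilde{\psi}=\psi\circ\widetilde{\Phi}$ restricting to $\psi'_g$ and $\psi_{\theta(g)}$ on the two boundary components.

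It then remains to show $\sigma(Z)=0$ and $\sigma(Z;\widetilde{\psi})=0$ (and, in the infinite-dimensional case, $\sigma^{(2)}_\G(Z;\widetilde{\psi})=0$), after which Theorem~\ref{thm:APS} yields $\rho_\psi(\theta(g))-\rho_{\psi'}(g)=\sigma(Z;\widetilde{\psi})-n\sigma(Z)=0$ (respectively $\sigma^{(2)}_\G(Z;\widetilde{\psi})-\sigma(Z)=0$). For this I would argue, exactly as in the proof of Theorem~\ref{thm:sigbounded}, that the intersection form vanishes on the image of $H_2(\partial Z)$ and then control the rest of $H_2(Z)$ by Mayer--Vietoris for $Z\simeq N_g\cup_\gamma\Sigma''$. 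The piece $\Sigma''\times D^2\simeq\Sigma''$ contributes no second homology with any coefficients, since a surface with boundary is homotopy equivalent to a $1$-complex; thus every class in $H_2(Z;-)$ is either pushed in from $H_2(N_g\times\{0\})\subset H_2(\partial Z)$ or detected by the connecting map into the (twisted) first homology of the overlap circles $\gamma_i$. The main obstacle---and the one place the hypothesis is used---is showing that this connecting map is injective, i.e.\ that the classes of the $\gamma_i$ remain independent in $H_1(N_g;-)\oplus H_1(\Sigma'';-)$; this is exactly where injectivity of $i_*:H_1(\Sigma';\Z)\to H_1(\Sigma;\Z)$ enters, guaranteeing, as in the homological analysis of Theorem~\ref{thm:additivity}, that no new $H_2$ is created off the boundary, so that the induced intersection forms are identically zero and both signatures vanish.
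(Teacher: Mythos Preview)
Your approach is correct in outline but differs genuinely from the paper's. The paper does \emph{not} build a direct two-ended bordism between $N_g$ and $N_{\theta(g)}$; instead it constructs a three-ended cobordism $W=(N_{\mathrm{id}}\times[0,1])\cup_{(\Sigma'\times[-\epsilon,\epsilon])_p}(N'_g\times[0,1])$ with $\partial W=N_{\mathrm{id}}\sqcup N'_g\sqcup(-N_{\theta(g)})$, shows its signature defect vanishes, and then invokes $\rho_\psi(\mathrm{id})=0$ (Corollary~\ref{cor:easyrhoprops}) to conclude. Your construction---attaching $\Sigma''\times D^2$ to $N_g\times[0,1]$ along the $\gamma$-filling solid tori---is more economical in that it avoids the extraneous $N_{\mathrm{id}}$ boundary component, at the cost of a slightly more delicate verification that $\partial Z\cong N_{\theta(g)}\sqcup(-N_g)$ and that the coefficient system patches (the basepoint conventions of Section~\ref{sec:calc method} make the latter a little fiddly when the basepoint component of $\partial\Sigma'$ does not coincide with $\partial\Sigma$).

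For the signature vanishing, both proofs ultimately rest on the same observation: injectivity of $H_1(\Sigma';\Z)\to H_1(\Sigma;\Z)$ forces $(\Sigma,\Sigma')$ to be, up to homotopy, a $1$-dimensional relative CW-pair, so $H_2(\Sigma,\Sigma')\cong H_2(\Sigma'',\gamma)=0$ with \emph{any} coefficients. In the paper this is used to show $H_1(\Sigma')\to H_1(N_{\mathrm{id}})\cong H_1(\Sigma)$ is injective; in your Mayer--Vietoris for $Z\simeq N_g\cup_\gamma\Sigma''$ the same fact gives injectivity of $H_1(\gamma)\to H_1(\Sigma'')$ directly (from the long exact sequence of $(\Sigma'',\gamma)$), which already kills $\partial_*$ without needing anything about $H_1(N_g)$. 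You should make that deduction explicit rather than gesturing at Theorem~\ref{thm:additivity}, whose homological analysis concerns the Wang sequence and $C(H)$ and is not really the template here; Theorem~\ref{thm:sigbounded} is the closer model.
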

\begin{proof} The proof is very similar to the proof of Theorem~\ref{thm:additivity}. In analogy to the proof of Theorem~\ref{thm:sigbounded}, we will define a certain $4$-manifold $W$ and show that
$$
\partial W= N_{id}\times \{0\}\sqcup N_{g}'\times \{0\}\sqcup -N_{\theta(g)}.
$$
Here we mean $id:\Sigma\to \Sigma$. A superscript prime will denote objects associated to the subsurface $\Sigma'$. Before defining $W$, certain remarks will be helpful.

In this proof it is convenient to take the definition of the mapping torus of $f$ (any $f$) to be the quotient $\Sigma\times [-1,1]/\sim$ where $(x,-1)\sim (f(x),1)$. Recall that we have described the transition from $M_f$ to $N_f$ in terms of Dehn fillings. In this proof it is convenient to consider the alternative definition wherein $N_f$ is obtained as a quotient space of $M_f$ wherein, for each point $x\in \partial \Sigma$, the circle $x\times S^1$ is pinched to  single point. Let $(M_f)_p\equiv N_f$ denote such a pinching operation. Similarly we let $(\Sigma\times [-\epsilon,\epsilon])_p$ denote the quotient space of $\Sigma\times [-\epsilon,\epsilon]$ obtained by pinching each $x\times [-\epsilon,\epsilon]$ to a point (say $x\times \{0\}$). Observe that there is a homeomorphic copy of $(\Sigma\times [-\epsilon,\epsilon])_p$ embedded in $\Sigma\times [-\epsilon,\epsilon]$ obtained using a collar on $\partial \Sigma$. Since there is a copy of $\Sigma\times [-\epsilon,\epsilon]$ embedded in $M_f$ (for any $f$), there is a copy of $(\Sigma\times [-\epsilon,\epsilon])_p$ embedded in $N_f$. 

By the same argument there is a copy of $(\Sigma'\times [-\epsilon,\epsilon])_p$ embedded in $N'_g$.  We claim that there is also a copy of $(\Sigma'\times [-\epsilon,\epsilon])_p$ embedded in $N_{id}$.  Indeed, for \textit{any} $f$, certainly there is a copy of $\Sigma'\times [-\epsilon,\epsilon]$ embedded in $M_{f}$, so there is a copy of 
$\Sigma'\times [-\epsilon,\epsilon]/\sim$ in $N_{f}$, where $\sim$ denotes that we have pinched only those circles corresponding to points $x\in \partial \Sigma'\cap \partial \Sigma$. This is not the same as $(\Sigma'\times [-\epsilon,\epsilon])_p$. However, there is a copy of the latter embedded in $\Sigma'\times [-\epsilon,\epsilon]/\sim$ (and hence in $N_{f}$).

Armed with these observations, we define the cobordism $W$ as the union of $N_{id}\times [0,1]$ and $ N'_g\times [0,1]$ identified along the above copies of $(\Sigma'\times [-\epsilon,\epsilon])_p$:
$$
(\Sigma'\times [-\epsilon,\epsilon])_p\hookrightarrow N_{id}\times \{1\}~~\text{and}~~
(\Sigma'\times [-\epsilon,\epsilon])_p \hookrightarrow  N'_{g}\times \{1\}.
$$
Clearly $\partial W= N_{id}\times \{0\}\sqcup N_{g}'\times \{0\}\sqcup  Y$, and we claim that $Y\cong N_{\theta(g)}$. To see this note that $Y$ is the union of
$$
N_{id}\setminus (\Sigma'\times [-\epsilon,\epsilon])_p\cup N'_g\setminus (\Sigma'\times [-\epsilon,\epsilon])_p,
$$
along their common boundaries. But $N_{\theta(g)}$ has an identical decomposition. For,  one may obtain a copy of $M_{\theta(g)}$ by starting with $M_{id}$, then deleting the product $\Sigma'\times [-\epsilon,\epsilon]$ and replacing it with the ``twisted product'' 
$$
(\Sigma'\times [-\epsilon,0]\cup \Sigma'\times [0,\epsilon])/\sim
$$
where $(x,0)\sim (g(x),0)$. The latter is homeomorphic to the twisted product obtained from  $M'_g$ by deleting a product $\Sigma'\times [-\epsilon,\epsilon]$. After taking into account the relevant pinching, this shows that $Y\cong N_{\theta(g)}$.

The representations on $N_{id}$ and $ N'_g$
extend to $\pi_1(W)$. Hence by Theorem~\ref{thm:APS}
$$
\rho_\psi(id)+\rho_{\psi'}(g)-\rho_\psi(\theta(g))
$$
is the signature defect of $W$. But consider the Mayer-Vietoris sequence as in the proofs of Theorems~\ref{thm:additivity} and ~\ref{thm:sigbounded}:
$$
H_2(N_{id}\times [0,1])\oplus H_2(N'_g\times [0,1])\overset{(i^2_*+j^2_*)}{\longrightarrow} H_2(W)\overset{\partial_*}{\longrightarrow} H_1(\Sigma') \overset{(i^1_*,j^1_*)}{\longrightarrow} H_1(N_{id})\oplus H_1(N'_g).
$$
We claim that $i^1_*$ is injective with any coefficients. Since $\pi_1(N_{id})\cong \pi_1(\Sigma)$, $H_1(N_{id})\cong H_1(\pi_1(\Sigma))$ with any coefficients. Thus it suffices to consider the map on first homology induced by $i:\Sigma'\hookrightarrow \Sigma$. Since $\Sigma'$ and $\Sigma$ deformation retract to $1$-complexes, the hypothesis that this map induces a monomorphism on $H_1(-;\Z)$ is equivalent to saying that, up to homotopy equivalence, $(\Sigma,\Sigma')$ is a $1$-dimensional relative CW-complex. It follows that $H_2(\Sigma;\Sigma')$ is zero with any coefficients and so $i_*$ is injective on $H_1$ with any coefficients.
Hence $H_2(W)$ is supported by $\partial W$ so the twisted and ordinary signatures vanish for $W$. Since, by Corollary~\ref{cor:easyrhoprops}, $\rho_\psi(id)=0$  the desired result follows.
\end{proof}

Proposition~\ref{prop:subsurfacerestrict} and Example~\ref{ex:easiestJ(H)} may be used to calculate certain $\rho$-invariants in terms of well-studied invariants of links of circles in $S^3$. In particular let $\Sigma'=D_n$ be the closed oriented 2-disk with $n$ open subdisks deleted.  Let $\mathcal{M}(D_n)$ denote the group of isotopy classes of orientation-preserving homeomorphisms of $D_n$ that are the identity on $\partial D_n$. It is known that $\mathcal{M}(D_n)$ is isomorphic to the group of n-string \emph{framed}
pure braids, $PF(n) \cong \mathbb{Z}^n \oplus P(n)$ \cite{PS,Natov}. Here $P(n)$ is the usual group of n-string pure braids. Any embedding of $D_n$ into $\Sigma$ defines a homomorphism $\theta: \mathcal{M}(D_n) \rightarrow \mathcal{M}(\Sigma)$. Suppose $i_*:H_1(D_n;\Z)\to H_1(\Sigma;\Z)$ is injective. Then Proposition~\ref{prop:subsurfacerestrict} shows that the $\rho$-invariants associated to $\Sigma$ are equal to  $\rho$-invariants associated to $D_n$, and Example~\ref{ex:easiestJ(H)} indicates how the latter are equal to certain $\rho$-invariants of the zero framed surgery on the link obtained as the closure of the associated pure braid. The latter have been well studied in recent years by knot theorists.

\section{Extension of the $\rho$-invariants to homology cylinders}\label{sec:invariantsforcylinders}

The monoid of homology cylinders may be considered to be an enlargement of the mapping class group of $\Sigma$. In many cases the higher-order $\rho$-invariants and signature co-cycles extend to this monoid. We will focus attention of the case that $\partial \Sigma$ is connected and $H$ is one of the terms of the lower central series of $\pi_1(\Sigma)$.

We recall the definition, following Levine \cite{Le8}.

\begin{definition}\label{def:homcyl}A \textbf{homology cylinder} over $\Sigma$, denoted $C$, is
a compact oriented 3-manifold $C$ equipped
with two embeddings
$i^+ , i^-: \Sigma \rightarrow \partial C$ satisfying that
\begin{enumerate}
\item $i^+$ is
orientation-preserving and $i^-$ is orientation-reversing,
\item $\partial C= i^+ (\Sigma) \cup i^- (\Sigma)$ and
$i^+ (\Sigma) \cap i^- (\Sigma)=i^+ (\partial \Sigma)
=i^- (\partial \Sigma)$,
\item $i^+ \bigl|_{\partial \Sigma}=i^- \bigl|_{\partial \Sigma}$,
\item $i^+,i^-:H_{\ast} \Sigma \rightarrow H_{\ast} C$
are isomorphisms.
\end{enumerate}

\end{definition}

\begin{example}\label{ex:cylinder} For any mapping class $f$,
$(C,i^+,i^-)=(\Sigma \times I, Id \times 1, f \times 0)/\sim$ gives
a homology cylinder, where $\sim$ means that we identify $(x,t)$ to $(x,0)$ for each $t\in [0,1]$ and $x\in \partial\Sigma$.
\end{example}

The set $\mathcal{C}$ of orientation-preserving
diffeomorphism classes of
homology cylinders over $\Sigma$ is a monoid (by concatenation), denoted $\mathcal{C}$, with the identity element
$1_{\mathcal{C}}:=(\Sigma \times I, Id \times 1, Id \times 0)$.
Example \ref{ex:cylinder} shows how to define a map $\mathcal{I}\to \mathcal{C}$ that is an injective map of monoids.

For any $C\in\mathcal{C}$ then there is an associated closed \emph{oriented} manifold $N_C$ obtained by identifying the two copies of $\Sigma$. If $\overline{C}$ is the homology cylinder obtained by reversing the roles of $+$ and $-$ then $N_{\overline{C}}=-N_C$. If $C$ lies in the image of $f\in\mathcal{I}$ as in Example~\ref{ex:cylinder} then $N_C\cong N_f$. Given $H\lhd \pi_{1}(\Sigma)$, we say that \textbf{$C$ induces the identity modulo $H$} if, for all $x\in \pi_1(\Sigma)$, $i^+_*(x)=i^-_*(xh)$  for some $h\in H$. We then say \textbf{$C\in C(H)$}. Thus, for example, $\mathcal{C}(F_{2})$ is the analogue of the Torelli group. Then we have
$$
\pi_{1}(N_{C})=\pi_{1}(C)/\langle i^{+}_{\ast}(x)=i^{-}_{\ast}(x)\mbox{ for all }x\in\pi_{1}(\Sigma)\rangle
$$
For example, if $H=F_{2}$ and $C\in C(H)$, then $H_{1}(N_{C})\cong \Z^{2g}$ coming from $H_{1}(\Sigma)$.

Consider the case $H=F_{n}$, where $F=\pi_{1}(\Sigma)$ and assume $C\in C(F_{n})$.  By Stallings' Theorem \cite[Theorem 5.1]{St}, $i^{\pm}$ induce isomorphisms
$$
F/F_{n}\xrightarrow{i^{+}_{n}} \pi_{1}(C)/(\pi_{1}(C))_{n}\xleftarrow{i^{-}_{n}} F/F_{n}.
$$
Moreover, since $C\in C(H)$, $i^{+}_{n}\circ(i^{-}_{n})^{-1}$ is the identity on $F/F_{n}$.  Then we have
\begin{eqnarray*}
\pi_{1}(N_{C})/(\pi_{1}(N_{C}))_{n} &\cong& \pi_{1}(C)/\langle i^{+}_{\ast}(x)=i^{-}_{\ast}(x), \forall x\in F, (\pi_{1}(C))_{n}\rangle\\
&\cong& \pi_{1}(C) / \langle i^{-}_{\ast}(x)i^{-}_{\ast}(h_x)=i^{-}_{\ast}(x), \forall x\in F, (\pi_{1}(C))_{n}\rangle\\
&\cong& \pi_{1}(C) / \langle i^{-}_{\ast}(h)=1, h_x\in F_{n}, (\pi_{1}(C))_{n}\rangle\\
&\cong& \pi_{1}(C)/ (\pi_{1}(C))_{n}
\end{eqnarray*}

Thus, for $C\in C(F_{n})$, there is a unique epimorphism
$$
\phi_{C}: \pi_1(N_{C})\to F/F_{n}
$$
 that is the composition of
\begin{equation}\label{eq:defrhon}
\pi_{1}(N_{C})\twoheadrightarrow \pi_{1}(N_{C)}/(\pi_{1}(N_{C}))_{n}\xrightarrow{\cong}\pi_{1}(C)/(\pi_{1}(C))_{n}\xrightarrow{(i^{+}_{n})^{-1}} F/F_{n}
\end{equation}
Therefore, given a fixed unitary representation $\psi: F/F_{n}\to U$, we can define $\rho^{\psi}(C)=\rho(N_{C},\psi\circ\phi_{C})$.  In the infinite-dimensional case, we will denote this invariant $\rho_{n}(C)$ (using the left-regular representation of $F/F_{n}$). Moreover, the restriction to $\mathcal{C}(F_n)$ is not necessary, since we can extend $\rho_n$ to all of $\mathcal{C}$ by
\begin{definition}\label{def:rhon}  If $C\in \mathcal{C}$ then $\rho_n(C)$ is $\rho(N_C,\psi_C)$ where $\psi_C$ is the composition
$$
\pi_{1}(N_{C})\twoheadrightarrow \pi_{1}(N_{C)}/(\pi_{1}(N_{C}))_{n}^r\overset{\ell_r}{\longrightarrow}U\left(\ell^{(2)}(\pi_{1}(N_{C)}/(\pi_{1}(N_{C}))_{n}^r\right),
$$
and $G^r_n$ denotes the $n^{th}$ term of the \emph{rational lower central series} \cite{St}.
\end{definition}

We also consider a quotient of $\mathcal{C}$, \textbf{the group, $\mathcal{H}$, of homology cobordism classes of homology cylinders }, wherein $C$ is homology cobordant to $D$ if there is a compact oriented $4$-manifold $V$ whose boundary is $N_{\overline{C}\circ D}$  such that the natural inclusions $C\hookrightarrow V$ and $D\hookrightarrow V$ induce isomorphisms on homology (for the details of this definition we refer the reader to ~\cite{Le8,Le9}). The composition $\mathcal{I} \to \mathcal{C}\to \mathcal{H}$ is a monomorphism of groups. We will denote the group of homology cobordism classes of homology cylinders that induce the identity modulo $F_{n}$ by $\mathcal{H}(F_{n})$.

We will now show that the $\rho_n$ of Definition~\ref{def:rhon} are homology cobordism invariants and hence descend to $\mathcal{H}$ (again by Stallings theorem ~\cite{St}).

\begin{theorem}\label{thm:rhondefoncyl}
The invariant $\rho_{n}:C(F_2)\to \mathbb{R}$ descends to a well-defined function $$\rho_{n}:\mathcal{H}(F_{2})\to\mathbb{R}$$
\end{theorem}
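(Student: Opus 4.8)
The plan is to follow the standard template for proving homology cobordism invariance of $L^{(2)}$-$\rho$-invariants: realize the difference of the two $\rho$-invariants as the signature defect of a single bounding $4$-manifold over which the canonical coefficient system extends, and then show that \emph{both} the ordinary and the von Neumann signatures of that $4$-manifold vanish, so that the defect is zero by Theorem~\ref{thm:APS}. Concretely, suppose $C,D\in C(F_2)$ are homology cobordant, witnessed by a compact oriented $V$ with $\partial V=N_{\overline C\circ D}$ for which the inclusions $C\hookrightarrow V$ and $D\hookrightarrow V$ induce isomorphisms on homology. It suffices to prove $\rho_n(C)=\rho_n(D)$.

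First I would assemble a cobordism $Z$ with $\partial Z=-N_C\sqcup N_D$. Using the homology-cylinder analogue of the pair-of-pants construction of Section~\ref{sec:Defsigs}, there is a $4$-manifold $P=P(\overline C,D)$ (the total space of a $\Sigma$-bundle over the twice-punctured disk with monodromies $\overline C$ and $D$) with $\partial P=N_{\overline C}\sqcup N_D\sqcup -N_{\overline C\circ D}=-N_C\sqcup N_D\sqcup -N_{\overline C\circ D}$, where I use $N_{\overline C}=-N_C$. Gluing $V$ to $P$ along their common component $N_{\overline C\circ D}$ yields $Z:=V\cup P$ with $\partial Z=-N_C\sqcup N_D$. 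Since $C$ and $D$ include into $V$ by homology isomorphisms, Stallings' theorem~\cite{St} shows the inclusions induce isomorphisms of the relevant (rational) lower central series quotients, and the bundle-theoretic argument used for equation~\eqref{eq:delta} shows the canonical quotients extend across $P$; hence there is $\tilde\psi\colon\pi_1(Z)\to\Gamma:=\pi_1(Z)/(\pi_1 Z)^r_n\to U(\ell^{(2)}(\Gamma))$ whose restrictions to the two ends are the canonical systems defining $\rho_n(C)$ and $\rho_n(D)$. Here one invokes the induction invariance of the von Neumann $\rho$-invariant to replace each end's own quotient by its (isomorphic, by Stallings) image in $\Gamma$. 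By Theorem~\ref{thm:APS}, the signature defect $\sigma^{(2)}_\Gamma(Z)-\sigma(Z)$ then equals $\rho_n(D)-\rho_n(C)$.

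It remains to show both signatures of $Z$ vanish; by Novikov additivity each splits as the sum of the contributions of $V$ and of $P$. The ordinary signature $\sigma(V)$ is zero because $H_2(V;\Q)\cong H_2(C;\Q)\cong H_2(\Sigma;\Q)=0$ (condition $(4)$ of Definition~\ref{def:homcyl}), while $\sigma(P)=\sigma_M(\overline C,D)=0$ by the homology-cylinder analogue of Theorem~\ref{thm:additivity}(2), since $\overline C,D$ induce the identity on $H_1(\Sigma)$. On the twisted side, $\sigma^{(2)}_\Gamma(P)=0$ by Theorem~\ref{thm:additivity}(1), using that $\overline C$ and $D$ lie in the relevant subgroup $C(H)$ of homology cylinders inducing the identity modulo $H$. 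Finally, $\sigma^{(2)}_\Gamma(V)=0$: the homology cobordism hypothesis forces the twisted $L^{(2)}$-homology of $V$ to be carried by its boundary, so that $\dim^{(2)}_\Gamma H_2(V;\mathcal{U}\Gamma)=0$ and the $L^{(2)}$-intersection form is null (the rank estimate is exactly as in the proof of Theorem~\ref{thm:sigbounded}). Combining these, $\rho_n(D)-\rho_n(C)=0$, so $\rho_n$ is constant on homology cobordism classes and descends to $\mathcal{H}(F_2)$.

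The main obstacle is the vanishing $\sigma^{(2)}_\Gamma(V)=0$; this is the step that genuinely uses the hypothesis, and it is precisely where one needs the polytorsion-free-abelian/Ore-localization machinery of Cochran--Orr--Teichner. One must convert the \emph{integral} homology cobordism condition into the vanishing of a \emph{twisted} $L^{(2)}$-Betti number of $V$, via a half-lives-half-dies duality argument for $H_2(V;\mathcal{U}\Gamma)$ relative to the boundary (together with the fact that the lower-central-series quotient $\Gamma$ is torsion-free nilpotent, hence PTFA). A secondary delicate point is the compatibility of the canonical, and hence \emph{a priori} end-dependent, lower-central-series coefficient systems across the gluing, which is handled by Stallings' theorem and the induction invariance of $\rho^{(2)}$.
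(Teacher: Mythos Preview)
Your cobordism $Z$ is exactly the paper's $E$, and your treatment of the Stallings extension of the coefficient system and the vanishing of both signatures on the $V$--piece is essentially correct (indeed $H_2(V;\Z)\cong H_2(C;\Z)\cong H_2(\Sigma;\Z)=0$, and the PTFA argument then kills $H_2(V;\mathcal K\Gamma)$).

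The gap is in your treatment of the $P$--piece. Your appeal to Theorem~\ref{thm:additivity}(1) to obtain $\sigma^{(2)}_\Gamma(P)=0$ conflates two different ``$C(H)$'''s. In Section~\ref{sec:invariantsforcylinders} the symbol $C(H)=\mathcal C(H)$ denotes the monoid of homology cylinders inducing the identity \emph{modulo} $H$; this is the analogue of $J(H)$ and is only what is needed to make the cocycle \emph{defined}. The hypothesis of Theorem~\ref{thm:additivity}(1), however, is membership in the subgroup $C(H)$ of Definition~\ref{def:C(H)}: inducing the identity on $H/[H,H]\cong H_1(\Sigma;\Z[F/H])$. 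For $\rho_n$ the relevant $H$ is $F_n^r$, and a general homology cylinder in $\mathcal C(F_2)$ certainly does \emph{not} act trivially on $H_1(\Sigma;\Z[F/F_n^r])$ once $n\ge 3$. So Theorem~\ref{thm:additivity}(1) does not apply, and $\sigma^{(2)}_\Gamma(P)$ is, a priori, just the (possibly nonzero) cocycle value $\sigma_n^{(2)}(\overline C,D)$. With only the $V$--piece established, your Novikov decomposition yields $\rho_n(D)-\rho_n(C)=\sigma_n^{(2)}(\overline C,D)$, which is no better than the quasimorphism bound.

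The paper avoids this by \emph{not} splitting $Z$ at all. Instead it proves, by an Euler characteristic and duality computation, that the whole $E=Z$ is an integral homology cobordism between $N_C$ and $N_D$, i.e.\ $H_\ast(E,N_C;\Z)=0$. From $H_2(E,N_C;\Z)=0$ one gets $\sigma(E)=0$ immediately, and since $\Gamma$ is torsion-free nilpotent (hence PTFA) the same vanishing upgrades to $H_2(E,N_C;\mathcal K\Gamma)=0$, forcing $H_2(\partial E;\mathcal K\Gamma)\to H_2(E;\mathcal K\Gamma)$ to be surjective and hence $\sigma^{(2)}_\Gamma(E)=0$. This globally kills the defect without ever needing to know that the twisted signature of the pair-of-pants piece vanishes.
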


\begin{proof}
Let $C$ and $D$ be homology cylinders that induce the identity modulo $F_{2}$ and assume $C$ and $D$ are homology cobordant. The first step in the proof is to deduce that the closed manifolds $N_{C}$ and $N_{D}$ are homology cobordant.

Since $C$ and $D$ are homology cobordant, there is a 4-manifold $V$ with $\partial V= N_{\overline{C}\circ D}$ so that the inclusions of $C$ and $D$ into $V$ induce isomorphisms on all homology groups.

Let $W$ denote the 4-manifold obtained by identifying $N_{\overline{C}}\times [0,1]$ and $N_{D}\times[0,1]$ along a product neighborhood of $\Sigma$ in $N_{\overline{C}}\times\{1\}$ and $N_{D}\times\{1\}$.  The boundary of $W$ decomposes as $\partial W= N_{\overline{C}}\sqcup N_{D} \sqcup - N_{\overline{C}\circ D}$. Now let $\displaystyle E= W\bigcup_{-N_{\overline{C}\circ D}} -V$ and observe $\partial E= -N_{C}\sqcup N_{D}$.  We claim that $E$ is the desired homology cobordism between $N_{C}$ and $N_{D}$. It suffices to show that $H_*(E,N_C)=0$ since then, by symmetry, $H_*(E,N_D)=0$.
Clearly $H_{0}(E,N_{C})=H_{4}(E, N_{C})=0$.   By assumption the inclusion-induced map ${i^{+}}_{\ast}:H_{1}(\Sigma)\to H_{1}(N_{C})$ is an isomorphism.  Moreover each of the inclusions, namely of $\Sigma$ into $C $, $D$, $N_C$, $N_D$, $N_{\overline{C}\circ D}$, respectively and subsequently into $V$, $W$ and $E$, induces an isomorphism on $H_1$. Hence $H_{1}(E, N_{C})=H_{1}(E,N_{D})=0$. By duality and the universal coefficient theorem, we have
$$
H_{3}(E,N_{C})\cong H^{1}(E, N_{D})\cong \mathrm{Hom}(H_{1}(E,N_{D}),\Z)=0.
$$
Similarly, $H_2(E,N_C)$ is torsion-free. Thus to show that $H_2(E,N_C)=0$ it now suffices to show that $\chi(E,N_{C})=0$.  

By the long exact sequence for the pair $(E,N_{C})$,
$$
\chi(E,N_{C})=\chi(E)-\chi(N_{C})=\chi(E)=\chi(W)+\chi(V)-\chi(N_{\overline{C}\circ D})=\chi(W)+\chi(V),
$$
since the Euler characteristic of a closed oriented $3$-manifold is zero.  But $W$ is homotopy equivalent to $N_{C}\cup_{\Sigma}N_{D}$, hence
$$
\chi(W)=\chi(N_C)+\chi(N_D)-\chi(\Sigma)=2g-1;
$$
and , since $H_*(\Sigma)\cong H_*(C)\cong H_*(V)$,  ~$\chi(V)=1-2g$. Thus $\chi(E,N_C)=0$.

This completes the first step of the proof,  namely that $E$ is a homology cobordism between $N_C$ and $N_D$.

The second step of the proof is to show that the $\rho_n$ are, loosely speaking, invariants of homology cobordism of $3$-manifolds.  Suppose  that $N_{C}$ and $N_{D}$ are homology cobordant via the 4-manifold $E$ from above.  Let $\Gamma=\pi_{1}(N_{C})$, $\Delta=\pi_{1}(N_{D})$, $G=\pi_{1}(E)$, and $\gamma: N_{C}\to E$ and $\delta:N_{D}\to E$ denote the inclusion maps.  We have the following commutative diagram, where the maps on the bottom row are isomorphisms by Stallings' Theorem \cite[Theorem 7.3]{St}:
 $$
\begin{diagram}\label{diag:commutes}\dgARROWLENGTH=1.2em
\node{\pi_1(N_C)}\arrow{e,t}{i_*}\arrow{s,r}{\pi}\node{\pi_1(E)}\arrow{s,r}{\pi}\node{\pi_1(N_D)}\arrow{s,r}{\pi}\arrow{w,t}{i_*}
\\
\node{\frac{\pi_1(N_C)}{\pi_1(N_C)_n^r}}\arrow{se,b}{\ell_r}\arrow{e,tb}{j^n_*}{\cong}
\node{\frac{\pi_1(E)}{\pi_1(E)_n^r}}\arrow{s,r}{\ell_r}\node{\frac{\pi_1(N_D)}{\pi_1(N_D)_n^r}}\arrow{sw,b}{\ell_r}\arrow{w,tb}{i^n_*}{\cong} \\ \node[2]{U(\mathcal{H})}
 \end{diagram}
$$

Therefore, by Theorem~\ref{thm:APS},
$$
\rho_n(D)-\rho_n(C)=\sigma^{(2)}(E,\psi)-\sigma(E).
$$
Since $H_*(E,N_C;\Z)=0$,
$$
H_2(E;\Z)\to H_2(E,\partial E;\Z)
$$
is the zero map so $\sigma(E)=0$. Additionally, letting $\G=\pi_1(E)/\pi_1(E)_n^r$, since $H_2(E,N_C;\Z)=0$  and $\G$ is a poly-(torsion-free-abelian group), it follows from ~\cite[Corollary 2.8]{CH2} that $H_2(E,N_C;\Z[\G])$ is a $\Z[\G]$-torsion module, implying that $H_2(E,N_C;\mathcal{K}\G)=0$. Thus
$$
H_2(E;\mathcal{K}\G)\overset{\partial_*}{\rightarrow} H_2(E,\partial E;\mathcal{K}\G)
$$
is the zero map (see the Appendix for definitions and more detail for $\mathcal{K}\G$). Hence
$$
H_2(\partial E;\mathcal{K}\G)\to H_2(E;\mathcal{K}\G)
$$
is surjective. By property $1.$ of Proposition~\ref{prop:rhoprops},
 $\sigma^{(2)}(E,\psi)=0$.  Thus $\rho_{n}(C)=\rho_{n}(D)$.
\end{proof}

The discussion of Section~\ref{sec:Defsigs} extends to homology cylinders so that we can define signature cocycles for homology cylinders. Namely, given $C$ and $D\in \mathcal{C}(F_n)$ we can form a $4$-manifold $W(C,D)$ (analogous to $W(f,g)$) defined as
$$
W(C,D)=N_C\times[0,1]\cup_{\overline{A}\times\Sigma} N_D\times[0,1]
$$
where $\overline{A}$ is the arc $A$ with added collars on its boundary. Then
$$
\partial W(C,D)= N_C\sqcup N_D\sqcup -N_{CD}.
$$
Moreover, the fundamental group of a homology cylinder is a product modulo any term of the lower central series. With this in mind we can define a signature $2$-cocycle on $\mathcal{H}(F_n)$ that extends that which we already defined on $J(F_n)$ in the second part of Definition~\ref{defcocycle}.

\begin{definition}\label{defcocyclecyl} Given $\Sigma$ and $n$, we define a function $\sigma_n^{(2)}:\mathcal{H}(F_n)\times \mathcal{H}(F_n)\to \mathbb{R}$ by
$$
\sigma_n^{(2)}(C,D)= \sigma^{(2)}\left(W(C,D),\widetilde{\psi_n}\right)-\sigma(W(C,D)).
$$
\end{definition}

Then it follows immediately from Theorem~\ref{thm:APS} that

\begin{proposition}\label{prop:rhoissigdefectcyl} For each $n$ and $C,D\in \mathcal{H}(F_n)$,
$$
\sigma^{(2)}_{n}(C, D)=\rho_{n}(C) +  \rho_{n}(D)-\rho_{n}(CD).
$$
where $\rho_n$ is as in Definition~\ref{def:rhon}.
\end{proposition}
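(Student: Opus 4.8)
The plan is to imitate the proof of Proposition~\ref{prop:rhoissigdefect} essentially verbatim, the only genuinely new ingredient being the verification that a single coefficient system on $W(C,D)$ restricts correctly to the three boundary pieces. Recall from the paragraph preceding Definition~\ref{defcocyclecyl} that $W(C,D)$ is a compact oriented $4$-manifold with
$$
\partial W(C,D)= N_C\sqcup N_D\sqcup -N_{CD}.
$$
Thus $W(C,D)$ exhibits $(N_C,\psi_C)\sqcup(N_D,\psi_D)\sqcup(-N_{CD},\psi_{CD})$ as the boundary of a single pair $(W(C,D),\widetilde{\psi_n})$, and the whole identity will drop out of the $\ell^{(2)}$ signature theorem once this boundary decomposition is promoted to one of pairs.

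First I would construct the coefficient system $\widetilde{\psi_n}$ on $\pi_1(W(C,D))$. Since $W(C,D)$ is homotopy equivalent to $N_C\cup_\Sigma N_D$ and the fundamental group of each homology cylinder is a product modulo any term of its lower central series, Stallings' theorem \cite{St} identifies each of $\pi_1(N_C)/\pi_1(N_C)_n^r$, $\pi_1(N_D)/\pi_1(N_D)_n^r$, $\pi_1(N_{CD})/\pi_1(N_{CD})_n^r$ and $\pi_1(W(C,D))/\pi_1(W(C,D))_n^r$ canonically with $\G:=F/F_n^r$ --- exactly the computation that in Section~\ref{sec:Defsigs} produced the extension over $V(f,g)$ and $W(f,g)$ (compare equation~\eqref{eq:delta}). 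Composing the resulting surjection $\pi_1(W(C,D))\twoheadrightarrow\G$ with the left regular representation $\ell_r:\G\to U(\ell^{(2)}(\G))$ yields $\widetilde{\psi_n}$. By naturality of the Stallings identifications under the inclusions of the boundary components, the restriction of $\widetilde{\psi_n}$ to each of $N_C$, $N_D$, $N_{CD}$ is precisely the coefficient system $\psi_C$, $\psi_D$, $\psi_{CD}$ used to define $\rho_n$ in Definition~\ref{def:rhon}.

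With the boundary now understood as a boundary of pairs, I would apply the $\ell^{(2)}$ (Ramachandran) case of Theorem~\ref{thm:APS} to $(W(C,D),\widetilde{\psi_n})$:
$$
\rho\bigl(\partial W(C,D),\widetilde{\psi_n}|_\partial\bigr)= \sigma^{(2)}_\G\bigl(W(C,D),\widetilde{\psi_n}\bigr)-\sigma(W(C,D)).
$$
The right-hand side is $\sigma_n^{(2)}(C,D)$ by Definition~\ref{defcocyclecyl}. For the left-hand side, the von Neumann $\rho$-invariant is additive over disjoint unions and changes sign under orientation reversal, so
$$
\rho\bigl(\partial W(C,D),\widetilde{\psi_n}|_\partial\bigr)=\rho_n(C)+\rho_n(D)-\rho_n(CD).
$$
Equating the two expressions gives the asserted formula.

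The only non-formal step is the compatibility claim in the second paragraph: that the canonical surjection $\pi_1(W(C,D))\to\G$ restricts on the three boundary $3$-manifolds to the maps $\phi_C$, $\phi_D$, $\phi_{CD}$ of Definition~\ref{def:rhon}. This is where the hypothesis $C,D\in\mathcal{H}(F_n)$ (so that $C$ and $D$ induce the identity modulo $F_n$) is essential, since it guarantees that the two Stallings isomorphisms $i^\pm_n$ agree and that gluing $N_C$ to $N_D$ along $\Sigma$ leaves the lower-central-series quotient undisturbed --- the same phenomenon recorded in the chain of isomorphisms displayed just before Definition~\ref{def:rhon}. Once that identification is in place, the proposition is a one-line consequence of Theorem~\ref{thm:APS}, exactly as Proposition~\ref{prop:rhoissigdefect} was.
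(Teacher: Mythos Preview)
Your proposal is correct and follows exactly the paper's approach: the paper's proof consists of the single remark that the proposition ``follows immediately from Theorem~\ref{thm:APS},'' and you have simply spelled out the two ingredients that this entails, namely the construction of a coefficient system on $W(C,D)$ compatible with the boundary representations (which the paper leaves implicit in the sentence ``the discussion of Section~\ref{sec:Defsigs} extends to homology cylinders'') and the application of the $\ell^{(2)}$-signature theorem. There is nothing to add.
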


Our main result, Theorem~\ref{thm:sigbounded}, continues to hold and so
\begin{corollary}\label{cor:boundedcocyclecly} For any $n$, $\sigma^{(2)}_n$ is a bounded $2$-cocycle on $\mathcal{H}(F_n)$.
\end{corollary}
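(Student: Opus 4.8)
The plan is to verify the two defining properties of a bounded $2$-cocycle separately: the cocycle identity, which is essentially formal, and the boundedness of the range, which requires rerunning the proof of Theorem~\ref{thm:sigbounded} in the cylinder setting.

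First I would dispatch the cocycle identity exactly as in Corollary~\ref{cor:cocycle}. By Proposition~\ref{prop:rhoissigdefectcyl} we have, for all $C,D\in\mathcal{H}(F_n)$,
$$
\sigma^{(2)}_n(C,D)= \rho_n(C)+\rho_n(D)-\rho_n(CD).
$$
Comparing with the coboundary formula~\eqref{eq:coboundary} in the case $p=1$, namely $\delta\rho(f_0,f_1)=\rho(f_1)-\rho(f_0f_1)+\rho(f_0)$, this says precisely that $\sigma^{(2)}_n=\delta_{\mathbb{R}}(\rho_n)$, where $\rho_n\colon \mathcal{H}(F_n)\to\mathbb{R}$ is a genuine (well-defined) $1$-cochain. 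Since $\delta_{\mathbb{R}}\circ\delta_{\mathbb{R}}=0$, it follows at once that $\delta_{\mathbb{R}}(\sigma^{(2)}_n)=0$, so $\sigma^{(2)}_n$ is a $2$-cocycle. The only nontrivial input here is that $\rho_n$ is a well-defined function on $\mathcal{H}(F_n)$, which is guaranteed by Theorem~\ref{thm:rhondefoncyl} together with its evident generalization to all $n$ (the same Stallings-theorem argument applies with $F_n$ in place of $F_2$).

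Second, for boundedness I would carry over the proof of Theorem~\ref{thm:sigbounded} verbatim. By Definition~\ref{defcocyclecyl}, $W(C,D)$ is built from $N_C\times[0,1]$ and $N_D\times[0,1]$ glued along a product neighborhood $\overline{A}\times\Sigma$; contracting along the $[0,1]$-factors shows $W(C,D)\simeq N_C\cup_\Sigma N_D$ up to homotopy equivalence, exactly as $W(f,g)\simeq N_f\cup_\Sigma N_g$ in that proof. This yields the Mayer--Vietoris sequence
$$
H_2(N_C)\oplus H_2(N_D)\longrightarrow H_2(W(C,D))\overset{\partial_*}{\longrightarrow} H_1(\Sigma)\longrightarrow H_1(N_C)\oplus H_1(N_D),
$$
which one considers both with the $\mathcal{U}\G$-coefficients governing the von Neumann signature and with ordinary $\mathbb{C}$-coefficients. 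As in Theorem~\ref{thm:sigbounded}, both signatures of $W(C,D)$ are carried by the quotient $H_2(W(C,D))/i_*(H_2(\partial W(C,D)))$, so each is bounded in absolute value by the appropriate dimension of $\operatorname{image}\partial_*\subseteq H_1(\Sigma;-)$. The cell-structure estimate $\dim^{(2)}_\G H_1(\Sigma;\mathcal{U}\G)\leq\beta_1(\Sigma)$, and likewise $\leq\beta_1(\Sigma)$ for ordinary coefficients, then gives $|\sigma^{(2)}_n(C,D)|\leq 2\beta_1(\Sigma)$ independently of $C$ and $D$.

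The main obstacle is less a genuine difficulty than a point requiring care: one must confirm that the homotopy equivalence $W(C,D)\simeq N_C\cup_\Sigma N_D$ and the ensuing Mayer--Vietoris estimate really do go through when $N_C,N_D$ arise from homology cylinders rather than from mapping classes. The facts one needs are that the surface $\Sigma$ (hence $\beta_1(\Sigma)$) is unchanged and that $\Sigma\hookrightarrow N_C$ behaves homologically as in the mapping-class case; both follow from the definition of $N_C$ as the identification of the two copies of $\Sigma$ in $C$, together with condition~(4) of Definition~\ref{def:homcyl} that $i^\pm$ induce homology isomorphisms. Once these are in place, the bound is identical to that of Theorem~\ref{thm:sigbounded} and no new analytic input is required, so $\sigma^{(2)}_n$ is a bounded $2$-cocycle on $\mathcal{H}(F_n)$.
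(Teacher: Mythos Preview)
Your proposal is correct and follows essentially the same approach as the paper, which simply asserts that ``our main result, Theorem~\ref{thm:sigbounded}, continues to hold'' and deduces the corollary; you have appropriately filled in the details of why the Mayer--Vietoris estimate of Theorem~\ref{thm:sigbounded} transfers to $W(C,D)$ and why the cocycle identity follows from Proposition~\ref{prop:rhoissigdefectcyl}. One minor remark: you need not invoke a ``generalization'' of Theorem~\ref{thm:rhondefoncyl}, since that theorem already shows $\rho_n$ descends to $\mathcal{H}(F_2)\supset\mathcal{H}(F_n)$ for every $n$.
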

\begin{proposition} \label{prop:quasihomocyl} For any $n\geq 2$,  $\rho_n$ is a real-valued quasimorphism on $\mathcal{C}(F_n)$ and $\mathcal{H}(F_n)$.
\end{proposition}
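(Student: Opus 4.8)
The plan is to prove this in exact parallel with Proposition~\ref{prop:quasihomo}, simply transporting that argument from the mapping class group to the monoid of homology cylinders. The whole point is to identify the failure of $\rho_n$ to be a homomorphism with a signature defect that is already known to be universally bounded. First I would invoke Proposition~\ref{prop:rhoissigdefectcyl}, which gives, for all $C,D$ in the relevant monoid or group,
$$
\left|\,\rho_n(CD)-\rho_n(C)-\rho_n(D)\,\right|=\left|\,\sigma^{(2)}_n(C,D)\,\right|.
$$
Thus it suffices to exhibit a constant $D_{\rho_n}$ bounding $|\sigma^{(2)}_n(C,D)|$ independently of $C$ and $D$.

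Second, I would supply that bound via Corollary~\ref{cor:boundedcocyclecly}, which asserts that $\sigma^{(2)}_n$ is a bounded $2$-cocycle on $\mathcal{H}(F_n)$. Concretely, this rests on the extension of Theorem~\ref{thm:sigbounded} to homology cylinders: since $W(C,D)\simeq N_C\cup_\Sigma N_D$ up to homotopy equivalence (by the same contraction along the thickenings used in the proof of Theorem~\ref{thm:sigbounded}), the identical Mayer--Vietoris argument applies. The sequence
$$
H_2(N_C;\mathcal{U}\G)\oplus H_2(N_D;\mathcal{U}\G)\longrightarrow H_2(W;\mathcal{U}\G)\overset{\partial_*}{\longrightarrow} H_1(\Sigma;\mathcal{U}\G)
$$
bounds the von Neumann dimension of $H_2(W;\mathcal{U}\G)$ modulo the image of the boundary by $\dim^{(2)}_\G H_1(\Sigma;\mathcal{U}\G)\le\beta_1(\Sigma)$, yielding $|\sigma^{(2)}_n(C,D)|\le 2\beta_1(\Sigma)$. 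Taking $D_{\rho_n}=2\beta_1(\Sigma)$ then establishes the quasimorphism inequality, which is the defining condition.

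Finally I would address the two domains separately. On $\mathcal{H}(F_n)$ the invariant $\rho_n$ is well defined by Theorem~\ref{thm:rhondefoncyl}, and both displayed estimates apply verbatim, so $\rho_n$ is a genuine quasimorphism on this group. For the monoid $\mathcal{C}(F_n)$, I would note that the construction of $W(C,D)$ and the identity of Proposition~\ref{prop:rhoissigdefectcyl} are valid already at the level of representatives in $\mathcal{C}(F_n)$, before passing to homology cobordism; hence the same bound $2\beta_1(\Sigma)$ controls the deviation $|\rho_n(CD)-\rho_n(C)-\rho_n(D)|$, and the defining inequality of a quasimorphism remains meaningful for the monoid multiplication (concatenation). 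The main subtlety to check is precisely this last homotopy-theoretic point --- that the cylinder $W(C,D)$, built by the identical gluing recipe, still deformation retracts onto $N_C\cup_\Sigma N_D$ so that $H_1(\Sigma)$ genuinely sits in the middle of the Mayer--Vietoris sequence and the bound by $\beta_1(\Sigma)$ survives. Since the gluing construction is formally the same as in the mapping class setting, this is routine, and no new geometric input beyond the already-established extension of Theorem~\ref{thm:sigbounded} is required.
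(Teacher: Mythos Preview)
Your proposal is correct and follows exactly the approach the paper intends. The paper gives no explicit proof of Proposition~\ref{prop:quasihomocyl}; it simply asserts that Theorem~\ref{thm:sigbounded} ``continues to hold'' for homology cylinders and lists Corollary~\ref{cor:boundedcocyclecly} and Proposition~\ref{prop:quasihomocyl} as immediate consequences, so your write-up (Proposition~\ref{prop:rhoissigdefectcyl} plus the Mayer--Vietoris bound $2\beta_1(\Sigma)$ from the extended Theorem~\ref{thm:sigbounded}) is precisely the intended argument, spelled out in more detail than the paper itself provides.
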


Note that one can define  quasimorphisms and  cocyles on the monoid $\mathcal{C}(F_n)$.

We claim that these invariants are quite rich, as indicated by the following theorems. We should clarify that, while $\rho_n$ can be defined on all of $\mathcal{H}(F_2)$, it is only a quasimorphism when restricted to $\mathcal{H}(F_m)$ for $m\geq n$.

\begin{theorem}\label{thm:rhondense} Suppose $\Sigma$ has genus $g\geq 1$ and non-empty boundary. Then, for any $n\geq 2$
\begin{itemize}
\item [1.] The image of $\rho_n:\mathcal{H}(F_n)\to \mathbb{R}$ is dense.
\item [2.] The image of $\rho_n:\mathcal{H}(F_n)\to \mathbb{R}$ is infinitely generated.
\end{itemize}
\end{theorem}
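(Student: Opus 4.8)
The plan is to reduce both statements to a single realization result: for every knot $K\subset S^3$ I will produce a homology cylinder $C_K\in\mathcal{C}(F_n)\subseteq\mathcal{H}(F_n)$ whose invariant is the integral of the Levine--Tristram signature,
$$
\rho_n(C_K)=\int_0^1 \sigma_K(e^{2\pi i\theta})\,d\theta .
$$
Granting this, the image of $\rho_n$ contains the set $S=\{\int_0^1\sigma_K\,d\theta : K \text{ a knot}\}$. Since $\sigma_{K_1\#K_2}=\sigma_{K_1}+\sigma_{K_2}$ and $\sigma_{\overline K}=-\sigma_K$ for the mirror image $\overline K$, the set $S$ is in fact a subgroup of $\mathbb{R}$, and every element of $S$ is realized by a single $C_K$ (take $K$ a connected sum of knots and mirrors). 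It therefore suffices to exhibit a subgroup $S_0\leq S$ that is both dense and infinitely generated, because a subgroup of $\mathbb{R}$ is dense as soon as it is non-cyclic, and a subgroup containing an infinitely generated subgroup is itself infinitely generated (subgroups of finitely generated abelian groups are finitely generated).

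The construction of $C_K$ is by infection. Using $g\geq 1$, I would choose a non-separating simple closed curve $\eta$ on $\Sigma$ representing $x_1$, push it into the interior of the trivial cylinder $1_{\mathcal{C}}=\Sigma\times I$, remove a tubular neighborhood of $\eta$, and reglue the exterior $E_K$ of $K$ by the standard meridian--longitude identification. Because $E_K$ is a homology $S^1\times D^2$ glued along homology-compatible curves, $C_K$ is again a homology cylinder. To see that $C_K\in\mathcal{C}(F_n)$, note that $i^+$ and $i^-$ are unchanged on $\partial(\Sigma\times I)$, and that any discrepancy between $i^+_*$ and $i^-_*$ on $\pi_1$ is carried by the $0$-framed longitude $\lambda_K$. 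Since the meridian inclusion $S^1\hookrightarrow E_K$ induces isomorphisms on $H_1$ and $H_2$, Stallings' theorem gives $\pi_1(E_K)/(\pi_1E_K)_k\cong\mathbb{Z}$ for every $k$, so $\lambda_K$, being null-homologous, lies in $(\pi_1E_K)_k$ for \emph{all} $k$; hence the discrepancy lies in every term of the lower central series and $C_K$ induces the identity modulo $F_n^r$ for all $n\geq 2$.

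For the computation, $N_{C_K}$ is $N_{\text{id}}$ infected along $\eta$, and $\phi_{C_K}(\eta)=[x_1]$ has infinite order in $F/F_n^r$. The coefficient system restricted to $E_K$ factors through the infinite cyclic group generated by $\psi(\eta)$ (again because $\lambda_K$ dies and $\pi_1(E_K)/(\pi_1E_K)_n\cong\mathbb{Z}$), so the satellite/infection formula for the Cheeger--Gromov invariant (following \cite{COT,CH2}) yields $\rho_n(C_K)=\rho_n(1_{\mathcal{C}})+\rho^{(2)}(S^3_0(K))$, where $S^3_0(K)$ denotes $0$-surgery with its abelianization coefficient system. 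The first term vanishes because $N_{\text{id}}\cong\#_{2g}S^1\times S^2$ bounds $E=\natural_{2g}S^1\times B^3$, which has $H_2=0$ and over which the coefficient system extends, so Theorem~\ref{thm:APS} gives $\rho_n(1_{\mathcal{C}})=0$; and $\rho^{(2)}(S^3_0(K))=\int_0^1\sigma_K(e^{2\pi i\theta})\,d\theta$ is the classical identification of the abelian von Neumann $\rho$-invariant with the integrated Levine--Tristram signature already used in Example~\ref{ex:easiestJ(H)} and \cite{COT}. This establishes the realization result.

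Finally I would let $S_0\leq S$ be generated by the torus-knot values. It is known \cite{KM2,Collins,Boro1} that $\int_0^1\sigma_{T(2,2k+1)}(e^{2\pi i\theta})\,d\theta=-\tfrac{2k(k+1)}{2k+1}$, a fraction in lowest terms with denominator $2k+1$. As $k$ varies these denominators are unbounded and involve infinitely many primes, so $S_0$ is neither cyclic (hence it is dense) nor finitely generated; realizing $S_0\subseteq\operatorname{image}(\rho_n)$ by infecting with connected sums of torus knots then gives both conclusions at once. The hard part is Steps two and three: checking that infection keeps us inside $\mathcal{C}(F_n)$ and that the coefficient system collapses $E_K$ onto an infinite cyclic quotient, since this is precisely what forces the infection formula to output the full signature integral with no finite-order correction term. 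The torus-knot arithmetic can be replaced by any explicit family of knots whose signature integrals have unbounded denominators.
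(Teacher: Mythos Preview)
Your proposal is correct and follows essentially the same strategy as the paper: infect the identity homology cylinder along a curve $\eta$ with a knot $K$ so that $\rho_n(C(\eta,K))=\rho_0(K)$, and then appeal to properties of $\{\rho_0(K)\}$ for density and infinite generation. Your argument that $C_K\in\mathcal{C}(F_n)$ via Stallings' theorem and $\lambda_K\in\bigcap_k(\pi_1E_K)_k$, and your use of the infection/satellite formula, mirror precisely the paper's Proposition~\ref{prop:diffrho}.

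There is one genuine difference worth noting. The paper chooses $\eta\in\pi_1(N_C)_{n-1}$ with no power in $\pi_1(N_C)_n$ (so $\eta$ depends on $n$), whereas you take the fixed primitive curve $\eta=x_1$. Your choice is actually simpler: since $x_1$ has infinite order in $H_1(N_C)$, Proposition~\ref{prop:diffrho} applies with parameter $2$ and yields $\rho_i(C_K)-\rho_i(1_{\mathcal C})=\rho_0(K)$ for \emph{every} $i\geq 2$ simultaneously. The paper's depth-$n$ choice is tailored so that $\rho_i$ is unchanged for $i<n$ (useful elsewhere, e.g.\ in the proof of Theorem~\ref{thm:indep}), but for Theorem~\ref{thm:rhondense} alone your uniform choice suffices and avoids having to locate a simple closed curve deep in the lower central series. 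Note that your $\eta$ is not null-homologous in $C$, so you are implicitly using the surface framing to define $\ell_\eta$; this is harmless, and indeed the paper's own ``null-homologous'' requirement cannot literally be met when $n=2$.

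For the final step, the paper simply cites \cite{Ha2,ChL,COT2} for the fact that $\{\rho_0(K)\}$ is dense and infinitely generated, while you compute explicitly with $(2,2k+1)$ torus knots. Either route works; yours is more self-contained.
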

\begin{theorem} \label{thm:indep} Suppose $\Sigma$ has genus $g\geq 1$ and non-empty boundary. Then, for any $m\geq 2$, $\{\rho_n \}_{n=2}^{\infty}$
is a linearly independent subset of the real vector space of all functions $\{f:\mathcal{H}(F_m)\to\mathbb{R}\}$ modulo the subspace of bounded functions.
\end{theorem}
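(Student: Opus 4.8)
The plan is to argue by contradiction, reducing the statement to the unboundedness of nontrivial linear combinations. Suppose $\Phi=\sum_{n=2}^{K}a_n\rho_n$ agrees with a bounded function on $\mathcal{H}(F_m)$, with not all $a_n$ zero, and let $n_0$ be the \emph{largest} index with $a_{n_0}\neq 0$. It then suffices to produce a sequence $\{C_t\}_{t\geq 1}\subset \mathcal{H}(F_m)$ along which $\rho_{n_0}$ is unbounded while $\rho_n(C_t)$ stays bounded for every $n<n_0$; for then $\Phi(C_t)=a_{n_0}\rho_{n_0}(C_t)+\sum_{n<n_0}a_n\rho_n(C_t)=a_{n_0}\rho_{n_0}(C_t)+O(1)$ is unbounded, a contradiction forcing $a_{n_0}=0$ and, inductively, every $a_n=0$.

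To build the family I would use the realization underlying Theorem~\ref{thm:rhondense} together with the link-surgery description of Section~\ref{sec:calc method}. Concretely, I would infect (genetically modify) a base homology cylinder along a framed curve $\eta$ representing an $(n_0{-}1)$-fold iterated commutator in $\pi_1(\Sigma)$, using an infection knot $J_t$ whose integrated Levine--Tristram signature $\int_{S^1}\sigma_{J_t}$ grows without bound (for instance $J_t=\#^{t}J$ for a fixed knot $J$ with $\int_{S^1}\sigma_J\neq 0$). Because $\eta\in F_{n_0-1}$, the resulting cylinders $C_t$ induce the identity modulo $F_{n_0-1}$ and can be arranged to lie in $\mathcal{H}(F_m)$; moreover, as in Example~\ref{ex:easiestJ(H)} and \cite[Section 5]{COT}, the associated closed manifolds $N_{C_t}$ are zero-framed surgeries whose von Neumann $\rho$-invariants over $F/F_{n_0}^r$ are computed by the signature integrals $\int_{S^1}\sigma_{J_t}$. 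Passing to powers and invoking the homogenization of the quasimorphism $\rho_{n_0}$ (Proposition~\ref{prop:quasihomocyl}) then makes $\rho_{n_0}(C_t)$ unbounded.

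The boundedness (in fact vanishing) of the shallow invariants $\rho_n(C_t)$ for $n<n_0$ is the heart of the matter, and I would obtain it from the filtration mechanism already present in the proof of Theorem~\ref{thm:rhondefoncyl}. Since $\eta$ is an $(n_0{-}1)$-fold commutator, the infection admits a natural $4$-dimensional null-bordism $E$ of $N_{C_t}$ over the quotient $F/F_n^r$ for every $n\leq n_0-1$: the infection curve dies in $F/F_n^r$, so $E$ satisfies $H_2(E,N_{C_t};\Z)=0$ with $\G=\pi_1(E)/\pi_1(E)_n^r$ a poly-(torsion-free-abelian) group. Exactly as in that proof, \cite[Corollary 2.8]{CH2} forces $H_2(E,N_{C_t};\mathcal{K}\G)=0$, whence $\sigma^{(2)}_\G(E)=0=\sigma(E)$ and therefore $\rho_n(C_t)=0$ by Theorem~\ref{thm:APS}. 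By contrast, $\eta$ is \emph{nontrivial} in $F/F_{n_0}^r$, so no such null-bordism exists at level $n_0$ and the level-$n_0$ signature defect survives as the unbounded quantity computed above. This is the homology-cylinder analogue of the Cochran--Orr--Teichner solvable-filtration obstruction.

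The principal obstacle is reconciling two competing demands on the depth of $\eta$: it must be deep enough ($\eta\in F_m$) to keep $C_t$ inside $\mathcal{H}(F_m)$ and to kill all invariants below level $n_0$, yet shallow enough to be detected by $\rho_{n_0}$. When $n_0>m$ this is straightforward, since one may take $\eta\in F_{n_0-1}\subseteq F_m$ nontrivial in $F/F_{n_0}^r$. When $n_0\leq m$, however, $\rho_{n_0}$ is a genuine quasimorphism (the regime flagged before the theorem) and single infections along deep curves cannot detect it; here I would instead extend the explicit signature computation of Section~\ref{sec:examples}, realizing the requisite growth through classical Levine--Tristram signatures of the associated links, which persist under the depth constraint, and combine it with the homology-cobordism invariance (Theorem~\ref{thm:rhondefoncyl}) and homogenization to secure the same dichotomy. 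Carefully verifying the infection formula for the higher-order von Neumann $\rho$-invariant in the homology-cylinder setting, and handling these two regimes uniformly, is where the real work lies; the remaining bookkeeping is routine.
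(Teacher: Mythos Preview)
Your overall strategy is exactly the paper's: take the largest index $n_0$ with $a_{n_0}\neq 0$, infect the identity homology cylinder along a curve $\eta\in F_{n_0-1}$ no power of which lies in $F_{n_0}$, and use Proposition~\ref{prop:diffrho} to see that for the resulting cylinder $C(\eta,K)$ one has $\rho_i(C(\eta,K))=0$ for $i\le n_0-1$ while $\rho_{n_0}(C(\eta,K))=\rho_0(K)$, which can be made to exceed any prescribed bound $D$ by choosing $K$ appropriately. A single element suffices; you do not need a family $\{C_t\}$, powers, or homogenization.

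The one genuine misconception in your proposal is the perceived tension between ``$\eta$ deep enough to keep $C_t$ in $\mathcal{H}(F_m)$'' and ``$\eta$ shallow enough to be seen by $\rho_{n_0}$.'' There is no such tension: infection of \emph{any} homology cylinder $C\in\mathcal{C}(F_m)$ along \emph{any} null-homologous curve by a knot yields a homology cylinder still in $\mathcal{C}(F_m)$ (indeed, if one starts from the identity cylinder, then $C(\eta,K)\in\mathcal{C}(F_n)$ for every $n$). This is because the degree-one map $C(\eta,K)\to C$ rel boundary is a homology equivalence, so by Stallings' theorem it induces isomorphisms on all nilpotent quotients, and the automorphism $(i^+_n)^{-1}\circ i^-_n$ of $F/F_n$ is unchanged. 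Consequently your ``case $n_0\le m$,'' together with the proposed detour through explicit Levine--Tristram computations as in Section~\ref{sec:examples}, is unnecessary. Your description of the vanishing of the shallow invariants via a null-bordism argument is also more elaborate than needed: the paper packages that entire argument into Proposition~\ref{prop:diffrho}, which you should simply invoke.
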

We have learned that, for the case $n=2$, a result identical to Theorem~\ref{thm:rhondense} appeared in the (unpublished) thesis of T. Sakasai. These results parallel \cite[Section 5]{Ha2} where essentially the same results were proved for von Neumann $\rho$-invariants associated to the torsion-free derived series, rather than the lower central series. Before proving these theorems, we need to introduce a technique for modifying a homology cylinder in such a way that the value of $\rho_n$ changes in a predictable manner.

\begin{subsection}{Altering homology cylinders by infection}

Suppose $C$ is a homology cylinder, $\eta$ is a  null-homologous oriented simple closed curve in the interior of $C$, and $K$ is an oriented knot in $S^3$. We describe a procedure for altering $C$ to a new homology cylinder, $C(\eta,K)$, called \textbf{infecting $C$ along $\eta$ using $K$}   \cite[p.406]{Ha2}\cite[Section 3]{COT2}. Let $N(\eta)$ and $N(K)$ denote tubular neighborhoods of $\eta$ in $C$ and $K$ in $S^3$ respectively, and let $\mu_K$, $\ell_K$, $\mu_\eta$, $\ell_\eta$ denote the meridians and longitudes of $K$ and $\eta$. Define
\begin{equation}\label{eq:infection}
C(\eta,K)=(C-N(\eta))\cup_f (S^3-N(K))
\end{equation}
where $f:\partial(S^3-N(K))\to \partial(C-N(\eta)$ is defined by $f(\mu_K)=\ell_\eta^{-1}$ and $f(\ell_K)=\mu_\eta$. Since we have formed $C(\eta,K)$ by excising $N(\eta)$ and replacing it with $S^3-N(K)$, both of which have the homology of a circle, $C(\eta,K)$ remains a homology cylinder. Indeed, we may think of the solid torus $N(\eta)$ as the exterior of the trivial knot, $U$, in $S^3$. Then, since there is a degree one map relative boundary from $S^3-K$ to $S^3-U$, there is a degree one map relative boundary $C(\eta,K)\to C$. We leave it to the reader to check that if $C\in \mathcal{C}(F_n)$ then $C(\eta,K)\in \mathcal{C}(F_n)$.

The process of infecting a homology cylinder using a knot $K$ alters its $\rho$-invariants by an additive factor equal to the average of the classical Levine-Tristram signatures of $K$. Recall that if $K\hookrightarrow S^3$ and $V$ is a Seifert matrix for $K$ then, for any complex number $\omega$ of norm $1$, $(1-\omega)V+(1-\overline{\omega})V^T$ is a hermitian matrix whose signature is called the Levine-Tristram $\omega$-signature of $K$. The average of these integers, which is the integral over the circle, is denoted $\rho_0(K)\in \mathbb{R}$. The following proof closely follows \cite[Theorem 5.8]{Ha2} where the same theorem is proved for von Neumann $\rho$-invariants associated to the torsion-free derived series.

\begin{proposition}\label{prop:diffrho}Let $C(\eta,K)$ be as defined above
and let $G=\pi_1(N_C)$.  If, for some
$n\geq 1$, $\eta \in G_{n-1}$ but no power of $\eta$ lies in $G_{n}$, then
$$\rho_i(C(\eta,K))-\rho_i(C)=
\left\{
\begin{array}{ll}
    0 & 2 \leq i\leq  n-1;\\
    \rho_0(K) & i \geq n.\\
\end{array}
\right.
$$
where $\rho_0(K)$ is the integral of the classical Levine-Tristram signature function of $K$.
\end{proposition}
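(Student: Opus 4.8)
The plan is to realize the change under infection as the $\rho$-invariant of a single ``satellite'' piece sitting in a cobordism, following ~\cite[Theorem 5.8]{Ha2} and ~\cite[Section 3]{COT2}. Write $T=\partial N(\eta)$, $X=N_C\smallsetminus N(\eta)$, let $V=N(\eta)$ be the excised solid torus (the exterior of the unknot), and let $E_K=S^3\smallsetminus N(K)$. The three closed $3$-manifolds relevant to the statement are the three pairwise unions of these pieces along $T$:
\[
N_C=X\cup_T V,\qquad N_{C(\eta,K)}=X\cup_T E_K,\qquad M_K:=E_K\cup_T V,
\]
where the gluings on the $E_K$-side are the infection gluing $f$, so that $E_K\cup_T V$ is precisely the zero-framed surgery $S^3_0(K)$ (because $f$ sends $\mu_\eta$, which bounds in $V$, to $\ell_K$). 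First I would build the standard ``pair-of-pants'' cobordism $E$ from the products $X\times I$, $E_K\times I$, $V\times I$ and one copy of $T\times P$ (where $P$ is a disk with two open subdisks removed), glued along $T\times\partial P$; its boundary is $\partial E=N_{C(\eta,K)}\sqcup -N_C\sqcup -M_K$.

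Next I would propagate the coefficient system over $E$ and apply Theorem~\ref{thm:APS}. Setting $\G_i:=\pi_1(E)/\pi_1(E)_i^r$ and using Stallings' theorem exactly as in the proof of Theorem~\ref{thm:rhondefoncyl}, the inclusions of the three boundary components induce compatible maps to $\G_i$, and the restrictions to $N_C$ and $N_{C(\eta,K)}$ are the coefficient systems defining $\rho_i$. By Theorem~\ref{thm:APS} and additivity of $\rho$ over disjoint unions,
\[
\rho_i\bigl(C(\eta,K)\bigr)-\rho_i(C)-\rho^{(2)}\bigl(M_K,\phi_i|_{M_K}\bigr)=\sigma^{(2)}_{\G_i}(E,\widetilde{\phi}_i)-\sigma(E).
\]
The heart of the argument is to show that both signatures of $E$ vanish. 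Since $E$ is assembled from products (which have zero signature) and the single piece $T\times P$, and since $T\times P$ is homotopy equivalent to a $3$-complex on which the intersection form is identically zero (any two surfaces can be pushed off each other using the product structure), additivity of the signatures gives $\sigma(E)=0$ and, with $\mathcal{K}\G_i$-coefficients, $\sigma^{(2)}_{\G_i}(E,\widetilde{\phi}_i)=0$. Hence
\[
\rho_i\bigl(C(\eta,K)\bigr)-\rho_i(C)=\rho^{(2)}\bigl(M_K,\phi_i|_{M_K}\bigr).
\]

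It then remains to evaluate $\rho^{(2)}(M_K,\phi_i|_{M_K})$. The restricted system factors through $H_1(M_K)=\langle\mu_K\rangle\cong\Z$, with $\mu_K\mapsto \phi_i(\eta)^{-1}$ under $f$. For $2\le i\le n-1$ we have $\eta\in G_{n-1}\subseteq G_i\subseteq G_i^r$, so $\phi_i(\eta)=1$, the system on $M_K$ is trivial, and $\rho^{(2)}(M_K,\mathrm{triv})=0$. For $i\ge n$ the hypothesis that no power of $\eta$ lies in $G_n$ forces $\phi_i(\eta)$ to be nontrivial; since $\G_i$ is torsion-free nilpotent, $\phi_i(\eta)$ then has infinite order, so the system on $M_K$ factors through an embedded $\Z\hookrightarrow\G_i$. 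By the induction invariance of the $L^2$-$\rho$-invariant ~\cite{COT}, $\rho^{(2)}(M_K,\phi_i|_{M_K})=\rho^{(2)}(M_K,\Z)$, and by Example~\ref{ex:easiestJ(H)} this equals the integral of the Levine--Tristram signature function of $K$, namely $\rho_0(K)$. This yields the two cases of the proposition.

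I expect the main obstacle to be the vanishing $\sigma^{(2)}_{\G_i}(E,\widetilde{\phi}_i)=0$: one must verify that $H_2(E;\mathcal{K}\G_i)$ is carried by $\partial E$ (equivalently that the twisted intersection form of $E$ is zero), which is where the precise product-and-$T\times P$ structure of the cobordism is used, together with the $L^2$-dimension and additivity facts quoted in the proof of Theorem~\ref{thm:sigbounded}. A secondary point to check carefully is the rational-lower-central-series bookkeeping guaranteeing that $\phi_i(\eta)$ is trivial exactly for $i\le n-1$ and of infinite order for $i\ge n$.
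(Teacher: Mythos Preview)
Your overall architecture is exactly the paper's: build a cobordism whose boundary is $N_{C(\eta,K)}\sqcup -N_C\sqcup -M_K$, show its (twisted and untwisted) signature defect vanishes, then evaluate $\rho(M_K,\phi_i)$ using that $\mu_K$ is identified with $\eta$ and that the knot group's lower central series stabilizes at the commutator subgroup. The endgame analysis of $\rho(M_K,\phi_i)$ is essentially right, though you should record explicitly (as the paper does) that the inclusions $N_C\hookrightarrow E$ and $N_{C(\eta,K)}\hookrightarrow E$ induce isomorphisms $G/G_i^r\cong \pi_1(E)/\pi_1(E)_i^r\cong P/P_i^r$; without this you cannot identify the restricted coefficient systems on the boundary with the ones defining $\rho_i$.

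The genuine gap is in your cobordism and the signature step. As written, the ``pair-of-pants'' $4$-manifold does not assemble: $T\times P$ is a $4$-manifold with boundary three copies of $T^3=T\times S^1$, while the $T$-part of $\partial(X\times I)$ (and of $E_K\times I$, $V\times I$) is $T\times I$, not $T\times S^1$; there is no gluing along $T\times\partial P$ that produces a $4$-manifold with the stated boundary. Even granting some intended construction, your signature-vanishing argument (``products have zero signature, $T\times P$ has trivial intersection form, so by additivity $\sigma(E)=0$'') is not valid: Novikov additivity (Proposition~\ref{prop:rhoprops}(5)) applies only when gluing along full boundary components, and for gluings along codimension-$0$ pieces of the boundary one picks up Wall's Maslov correction terms, which you have not addressed. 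The paper instead takes the simpler cobordism
\[
W=\bigl(N_C\times[0,1]\bigr)\cup_h\bigl(M_K\times[0,1]\bigr),
\]
glued along the single solid torus $N(\eta)\times\{1\}\cong S^1\times D^2\times\{0\}$, and then shows (via a Mayer-Vietoris argument, quoting \cite[Lemma~2.4]{CHL3}) that $H_2(\partial W;\mathcal{K}\G_i)\to H_2(W;\mathcal{K}\G_i)$ is surjective, so the twisted and ordinary signatures of $W$ vanish by Proposition~\ref{prop:rhoprops}(1). Replacing your $E$ by this $W$ and your additivity claim by that Mayer-Vietoris computation would repair the argument.
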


\begin{proof}[Proof of Proposition~\ref{prop:diffrho}] We construct a cobordism, $W$, relating $N_{C(\eta,K)}$ to $N_C$ as follows. Let $M_K$ denote the zero framed Dehn surgery on $S^3$ along the knot $K$. Recall that this is defined as
$$
M_K=S^3-N(K)\cup_g (S^1\times D^2)
$$
where $g$ is an orientation-reversing diffeomorphism of the torus that identifies $\{1\}\times \partial D^2$ with $\ell_K$. The we define
\begin{equation}\label{eq:Wdef}
W=\left(N_C \times [0,1]\right)\cup_h M_K\times [0,1],
\end{equation}
where $h$ identifies the solid torus $N(\eta)\times \{1\}$ with the solid torus $S^1\times D^2\times \{0\}\subset M_K\times \{0\}$, as indicated schematically in Figure~\ref{fig:3boundcomps} ($N(\eta)\times \{1\}$ is dashed).

\begin{figure}[htbp]
\begin{center}
\begin{picture}(165,100)
\put(0,0){\includegraphics{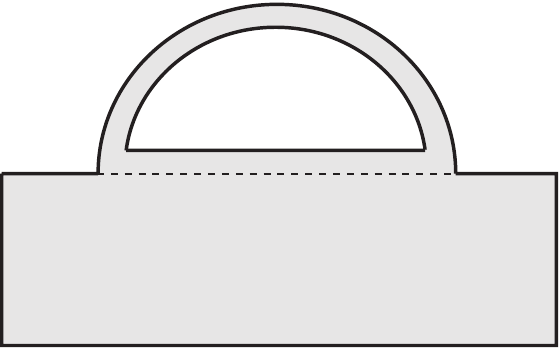}}
\put(65,-12){$N_C \times \{0\}$}
\put(-15,63){$N_{C(\eta,K)}$}
\put(63,37){$N(\eta)\times \{1\}$}
\put(63,63){$M_K\times \{1\}$}
\end{picture}
\end{center}
\caption{The $4$-manifold $W$ with $\partial W = N_C \sqcup -N_{C(\eta,K)} \sqcup M_K$}\label{fig:3boundcomps}
\end{figure}
It follows that
$$
\partial W =
N_C \sqcup -N_{C(\eta,K)} \sqcup M_K.
$$
Let $E=\pi_1(W)$, and $\G_i=E/E_{i}$ and consider the coefficient system
$$
\psi:E\overset{\phi}{\to}\G_{i}\overset{\ell_r}{\to}U(\ell^{(2)}(\G)_i)
$$
where $\phi$ is the canonical projection and $\ell_r$ is the left-regular representation. Then, by Theorem~\ref{thm:APS},
\begin{equation}\label{eq:rhosig1}\rho(N_C,\psi)-\rho(N_{C(\eta,K)},\psi) +
\rho(M_K,\psi) = \sigma^{(2)}(W,\psi)-\sigma(W).
\end{equation}
We claim that the right-hand side of \eqref{eq:rhosig1} is zero. In fact this is a direct consequence of \cite[Lemma 2.4]{CHL3}, so we will not repeat the proof. The basic idea is to show, using the Mayer-Vietoris sequence with $\mathcal{K}\G_i$-coefficients associated to \eqref{eq:Wdef}, that  $H_2(\partial W;\mathcal{K}\Gamma_i) \rightarrow
H_2(W;\mathcal{K}\Gamma_i)$ is surjective and then apply property $1.$ of Proposition~\ref{prop:rhoprops}.

Let $P=\pi_1(N_{C(\eta,K)})$ and recall $G=\pi_1(N_C)$. We claim that the inclusion maps $N_{C(\eta,K)}\hookrightarrow W$ and $N_{C}\times\{0\}\hookrightarrow W$ induce isomorphisms
\begin{equation}\label{eq:pi1}
P/P_i\cong E/E_i=\G_i~~ \text{and}~~ G/G_i\cong E/E_i=\G_i
\end{equation}
for each $i$. To see the first, note that $W$ deformation retracts to $\overline{W}=N_C\times\{0\}\cup N_{C(\eta,K)}$. Moreover $\overline{W}=N_{C(\eta,K)}\cup N(\eta)\times\{1\}$. Therefore $\overline{W}$ can be obtained from $N_{C(\eta,K)}$ by adding a single $2$-cell and then a single $3$-cell. The $2$-cell is added along $\ell_K$. But recall that, for a knot exterior, the lower central series stabilizes at the commutator subgroup. Thus $\ell_K\in \pi_1(S^3-N(K))_i$ for all $i$ and so $\ell_K\in P_i$ for all $i$. This implies the first isomorphism of \eqref{eq:pi1}. For the second inclusion, note that by the Seifert-Van Kampen theorem,
$$
E=\pi_1(W)\cong G\ast_\Z\pi_1(M_K),
$$
where $\eta$ is identified with $\mu_K$. The abelianization map $\pi_1(M_K)\to \Z$ induces a retraction $r$
$$
G\to E\cong G\ast_\Z\pi_1(M_K)\overset{r}{\to} G\ast_\Z\Z\cong G
$$
whose kernel is the normal closure of the commutator subgroup $\pi_1(S^3-K)_2\cong \pi_1(S^3-K)_i$. Thus $E/E_i\cong G/G_i$  establishing the second isomorphism of \eqref{eq:pi1}.

Therefore, by \eqref{eq:defrhon} and property $2.$ of Proposition~\ref{prop:rhoprops},
$$
\rho(N_C,\psi)=\rho_i(C)~~ \text{and}~~ \rho(N_{C(\eta,K)},\psi)=\rho_i(C(\eta,K)).
$$

Hence \eqref{eq:rhosig1} becomes
\begin{equation}\label{eq:rhosig2}
\rho_i(C(\eta,K))-\rho_i(C)
= \rho(M_K,\psi).
\end{equation}

It remains only to analyze $\rho(M_K,\psi)$. Recall that $\pi_1(M_K)$ is normally generated by the meridian $\mu_K$, which is identified with $\eta$ under the infection process. Since, by hypothesis,
$\eta\in G_{n-1}$, $\mu_K\in E_{n-1}$ and so $\psi(\pi_1(M_K))=0$ if $i\leq n-1$. Thus, by property $3.$ of Proposition~\ref{prop:rhoprops}, $\rho(M_K,\psi)=0$. Thus \eqref{eq:rhosig2} establishes Proposition~\ref{prop:diffrho} in the case $i\leq n-1$.

Now suppose $i\geq n$ and $i\geq 2$. Since $\pi_1(S^3-K)_2\cong \pi_1(S^3-K)_i$, we have $\psi(\pi_1(S^3-K)_2)=0$. Thus the restriction of $\psi$ to $\pi_1(M_K)$ factors through its abelianization, $\Z=\langle \mu_k\rangle$. Hence it suffices to show that $\psi(\mu_K)=\psi(\eta)$ is of infinite order in $\G_i$. Since $i\geq n$, there is a surjection $\G_i\to \G_{n}=E/E_{n}\cong G/G_{n}$ (using $\eqref{eq:pi1}$). So it suffices to show that no proper power of $\eta$ lies in $G_{n}$. But this was our hypothesis.
Therefore, by property $4.$ of Proposition~\ref{prop:rhoprops}, $\rho(M_K,\psi)=\rho_0(M_K)$, the integral over the circle of the Levine-Tristram signatures of $K$.

This completes the proof of Proposition~\ref{prop:diffrho}.

\end{proof}

\end{subsection}

Now that we can create homology cylinders with varied $\rho_n$, we can easily prove Theorems~\ref{thm:rhondense} and ~\ref{thm:indep}.

\begin{proof}[Proof of Theorem~\ref{thm:rhondense}] For fixed $n\geq 2$, let $C\in \mathcal{C}(F_n)$ be the \emph{identity} homology cylinder. Then
$$
\pi_1(C)/\pi_1(C)_{i}\cong\pi_1(N_C)/\pi_1(N_C)_{i}\cong F/F_i
$$
for every $i$ where $F=\pi_1(\Sigma)$ is a non-abelian free group. Since $F_{n-1}/F_{n}$ is known to be a non-trivial free abelian group, there exists some null-homologous simple closed curve $\eta\in C$ which lies in $\pi_1(N_C)_{n-1}$ but no power of which lies in $\pi_1(N_C)_{n}$. Therefore, by Proposition~\ref{prop:diffrho}, for \emph{any} knot $K$,
$$
\rho_n(C(\eta,K))=\rho_0(K).
$$
hence it suffices to show that
$$
\{\rho_0(K)~~|~~K\hookrightarrow S^3\}
$$
is dense in $\mathbb{R}$ and is an infinitely generated group. Both of these were shown explicitly in \cite[Thm. 5.11]{Ha2} using \cite[Section 2]{ChL}\cite[Prop.2.6]{COT2}.
\end{proof}

\begin{proof}[Proof of Theorem~\ref{thm:indep}] Suppose that $r_1 \rho_{i_1} + \cdots + r_k \rho_{i_k}$ is a function bounded by $D>0$,
where $r_i$ are non-zero real numbers and the $i_{j}$ are increasing with $j$. We shall reach a contradiction. Let $C\in \mathcal{C}(F_m)$ be the \emph{identity} homology cylinder and let $F=\pi_1(N_C)$. Let $n=i_k\geq 2$. As in the proof of Theorem~\ref{thm:rhondense} above, there is a curve $\eta\in C$ such that $\eta\in
F_{n-1}$ but no power of which lies in $F_n$. Consider $C(\eta,K)$ for any $K$ with
$|\rho_0(K)|>D$ (for example, let $K$ be the connected sum of a large number of right-handed
trefoil knots). For any $i\leq n-1$, $\eta\in \pi_1(N_C)_i$ so, by Proposition~\ref{prop:diffrho}, $\rho_i(C(\eta,K))=0$  and $|\rho_n(C(\eta,K))|>D$. This is a contradiction.
\end{proof}

In \cite{Sak1}, Sakasai defined an exact sequence analogous to our \eqref{eq:nonst2}:
\begin{eqnarray}\label{eq:nonstSakasai}
1\to \mathcal{S}_n\overset{i}\longrightarrow \mathcal{H}(F_n)\overset{r_n}\longrightarrow \mathrm{Isom}\left(H_1(\Sigma;\mathbb{Z}[F/F_n])\right),
\end{eqnarray}

It follows from Theorem~\ref{thm:additivity} that

\begin{proposition}
The restriction of $\rho_{n}:\mathcal{H}(F_{2})\to\mathbb{R}$ to $\mathcal{S}_n$ is a homomorphism.
\end{proposition}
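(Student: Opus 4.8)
The plan is to reduce the homomorphism property to the vanishing of the signature cocycle $\sigma^{(2)}_n$ and then invoke the homology-cylinder version of Theorem~\ref{thm:additivity}. By Proposition~\ref{prop:rhoissigdefectcyl}, for all $C,D\in\mathcal{H}(F_n)$ we have
$$
\sigma^{(2)}_n(C,D)=\rho_n(C)+\rho_n(D)-\rho_n(CD),
$$
and since $\mathcal{S}_n=\ker r_n$ is a subgroup of $\mathcal{H}(F_n)$ this identity applies to all $C,D\in\mathcal{S}_n$. Hence $\rho_n$ restricts to a homomorphism on $\mathcal{S}_n$ if and only if $\sigma^{(2)}_n(C,D)=0$ for all $C,D\in\mathcal{S}_n$. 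By Definition~\ref{defcocyclecyl} this cocycle is the difference $\sigma^{(2)}(W(C,D);\widetilde{\psi_n})-\sigma(W(C,D))$ of a twisted and an ordinary signature, so it suffices to show that both of these signatures vanish when $C,D\in\mathcal{S}_n$.

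Next I would verify that the two hypotheses of Theorem~\ref{thm:additivity} hold on $\mathcal{S}_n$. Using the identification $H_1(\Sigma;\Z[F/F_n])\cong F_n/[F_n,F_n]$ established in the first paragraph of the proof of Theorem~\ref{thm:additivity}, the kernel $\mathcal{S}_n=\ker r_n$ consists precisely of the homology-cobordism classes inducing the identity on $F_n/[F_n,F_n]$; this is exactly the homology-cylinder analogue of the subgroup $C(F_n)$ of Definition~\ref{def:C(H)}, the second condition there being vacuous because $\partial\Sigma$ is connected. Moreover $F_n\subseteq F_2=[F,F]$ for $n\geq2$, so $\mathcal{S}_n\subseteq\mathcal{H}(F_n)\subseteq\mathcal{H}(F_2)$ and every $C\in\mathcal{S}_n$ induces the identity on $H_1(\Sigma;\Z)=F/F_2$; that is, $\mathcal{S}_n$ lies in the Torelli analogue. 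Thus part (1) of Theorem~\ref{thm:additivity} (with $H=F_n$) yields $\sigma^{(2)}(W(C,D);\widetilde{\psi_n})=0$, while part (2) yields $\sigma(W(C,D))=0$; combining these gives $\sigma^{(2)}_n(C,D)=0$ and hence $\rho_n(CD)=\rho_n(C)+\rho_n(D)$.

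The step I expect to be the main obstacle is confirming that Theorem~\ref{thm:additivity} itself survives the passage from mapping classes to homology cylinders, so that the invocation above is legitimate. Its proof runs through the injectivity of the composite $H_1(\Sigma;\psi)\to H_1(M_f;\psi)\to H_1(N_f;\psi)$, and the first of these injections was extracted from the Wang sequence of the fibration $M_f\to S^1$ — a structure unavailable for a homology cylinder $C$, whose closed double $N_C$ need not fiber over the circle. I would replace this input with a direct argument that the inclusion $\Sigma\hookrightarrow N_C$ is injective on $H_1(-;\psi_n)$: since $C$ induces the identity modulo $F_n$, Stallings' theorem \cite{St} supplies isomorphisms $i^{\pm}_*:F/F_n\xrightarrow{\cong}\pi_1(C)/\pi_1(C)_n$, so $\psi_n$ is compatibly defined on $\Sigma$, $C$, and $N_C$, and the defining homology-isomorphism axiom of a homology cylinder, fed through the same base-change (Shapiro) argument used in the middle of the proof of Theorem~\ref{thm:additivity}, upgrades to the required monomorphism on twisted first homology after forming $N_C$. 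Once this single injectivity statement is in place, the Mayer--Vietoris computation for $W(C,D)\simeq N_C\cup_\Sigma N_D$ showing $\operatorname{image}\partial_*=0$ — and hence the vanishing of the corresponding signature — is formally identical to the mapping-class argument of Theorems~\ref{thm:additivity} and~\ref{thm:sigbounded}.
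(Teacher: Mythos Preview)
Your proposal is correct and follows essentially the same approach as the paper: the paper simply asserts that the proposition follows from Theorem~\ref{thm:additivity}, having already declared earlier in the section that the constructions and results (including Theorem~\ref{thm:sigbounded}) ``continue to hold'' for homology cylinders. Your argument is in fact more careful than the paper's, since you explicitly identify $\mathcal{S}_n$ with the homology-cylinder analogue of $C(F_n)\cap\mathcal{I}$ and flag the genuine issue---that the Wang-sequence step in the proof of Theorem~\ref{thm:additivity} requires a substitute for non-fibered $N_C$---which the paper glosses over.
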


\section{Appendix: Definition and basic properties of the von Neuman signature and von Neumann $\rho$-invariants}\label{sec:sigprops}

Given a closed, oriented 3-manifold $M$, a discrete group $\G$, and a representation $\phi : \pi_1(M)
\to \G$, the \textbf{von Neumann
$\boldsymbol{\rho}$-invariant}, $\rho(M,\phi)\in \mathbb{R}$, was defined by Cheeger and Gromov  ~\cite{ChGr1}. It is defined by choosing a Riemannian metric on $M$ and taking the difference between the $\eta$-invariant of $M$ and the von Neumann $\eta$ invariant of the $\G$-covering space associated to $\phi$. However, we prefer an equivalent definition of $\rho$, as a signature defect. Suppose $(M,\phi) = \partial(W,\psi)$ for some compact, oriented 4-manifold $W$ and $\psi :\pi_1(W) \to \G$ (meaning $\phi=\psi\circ i_*$), then it is known that $\rho(M,\phi) =
\sigma^{(2)}_\G(W,\psi) - \sigma(W)$ where $\sigma^{(2)}_\G(W,\psi)$ is the $L^{(2)}$-signature (von Neumann signature) of the $\G$-covering space of $W$ associated to $\psi$. We recall below the definition of the $L^{2}$-signature of a $4$-dimensional manifold.  For more information on $L^{2}$-signature and $\rho$-invariants see \cite[Section 2]{CT}, \cite[Section 5]{COT}\cite{LS}\cite[Section 3]{Ha2}.

Let $\G$ be a countable discrete group. Let $\mathcal{N}\G$ be the group von Neumann algebra of $\G$, a subalgebra of the bounded linear operators on $\ell^{(2)}(\G)$, and let $\mathcal{U}\G$ be the algebra of unbounded operators affiliated to $\mathcal{N}\G$ ~\cite{Rei1}. Let $h_{W,\G}$ be the equivariant intersection form on $H_2(W)$ with $\mathcal{U}\G$-coefficients, which is defined as the
composition
\begin{equation}\label{eq:herm}H_{2}(W;\mathcal{U}\G) \rightarrow H_{2}(W,\partial
W;\mathcal{U}\G) \xrightarrow{\text{PD}}
\overline{H^{2}(W;\mathcal{U}\G)} \xrightarrow{\kappa}
\overline{H_{2}(W;\mathcal{U}\G)^\ast}
\end{equation}
where $H_{2}(W;\mathcal{U}\G)^\ast =
\text{Hom}_{\mathcal{U}\G}(H_{2}(W;\mathcal{U}\G),\mathcal{U}\G)$.
Since $\mathcal{U}\G$ is a von Neumann regular ring (and is endowed with an involution),
the modules $H_{2}(W;\mathcal{U}\G)$ are finitely generated
projective right $\mathcal{U}\G$-modules. $\mathcal{U}\G$ is endowed with an involution with respect to which $h_{W,\G} \in \text{Herm}_{n}(\mathcal{U}\G)$ (a Hermitian matrix). Then $\sigma^{(2)}_{\G}: \text{Herm}_{n}(\mathcal{U}\G) \rightarrow \mathbb{R}$ is defined by
$$\sigma^{(2)}_{\G}(h)=\text{tr}_{
\G}(p_+(h))-\text{tr}_{
\G}(p_-(h))$$
for any  $h \in \text{Herm}_{n}(\mathcal{U}\G)$ where
$\text{tr}_{\G}$ is the von Neumann trace and $p_\pm$ are the
characteristic functions on the positive and negative reals. Here the trace is equal to the $\mathcal{U}\G$-dimension ~\cite{Rei1}. Thus we define $\sigma^{(2)}(W,\G)=\sigma^{(2)}_{\G}(h_{W,\G})$. It is known that $\sigma^{(2)}_{\G}$ descends to the Witt group of Hermitian forms on finitely generated projective
$\mathcal{U}\G$-modules (see for example Corollary~5.7 of \cite{COT}).

Suppose that $\G$ is a poly-(torsion-free-abelian) group. In particular $\G$ is torsion-free and amenable. In this case the von Neumann signature can be defined without the use of $\mathcal{U}\G$. For it is then known that $\Z\G$ is an Ore domain and embeds in its classical right ring of quotients $\mathcal{K}\G$, which is a division ring.  Moreover, the
map from $\Z\G$ to $\mathcal{U}\G$ factors as $\Z\G \rightarrow \mathcal{K}\G \rightarrow \mathcal{U}\G$
making $\mathcal{U}\G$ into a $\mathcal{K}\G-\mathcal{U}\G$-bi-module.  Since any module over a skew field is free, $\mathcal{U}\G$ is a flat $\mathcal{K}\G$-module.
Hence, $H_{2}(W;\mathcal{U}\G)\cong H_{2}(W;\mathcal{K}\G)\otimes_{\mathcal{K}\G}\mathcal{U}\G$. In particular, $H_{2}(W;\mathcal{K}\G)=0$ if and only if
$H_{2}(W;\mathcal{U}\G)$=0. In this case $\sigma^{(2)}_\Gamma$ can be thought of as a homomorphism from
$L^0(\mathcal{K}(\Gamma))$ to $\mathbb{R}$. Aside from the definition, the properties that we use in this paper are:

\begin{proposition}\label{prop:rhoprops}\

\begin{itemize}
\item [1.] If $(M,\phi) = \partial (W,\psi)$ for some compact,
4-manifold $W$ and
$$
H_2(W;\mathcal{U}\Gamma)/\text{Image}(H_2(\partial W;\mathcal{U}\Gamma))
$$
is a finitely-generated free $\mathcal{U}\Gamma$-module containing a free summand of half rank on which the equivariant intersection form vanishes, then
$\sigma^{(2)}_\Gamma(W,\psi)$ vanishes. If $\G$ is poly-torsion-free abelian then the same holds with $\mathcal{K}\G$-coefficients.
\item [2.] If $\phi$ factors through $\phi': \pi_1(M)\to \G'$ where
$\G'$ is a subgroup of $\G$, then $\rho(M,\phi') = \rho(M,\phi)$.
\item [3.] If $\phi$ is trivial (the zero map), then $\rho(M,\phi) = 0$.
\item [4.] If $M=M_K$ is the zero-surgery on a knot $K$ and $\phi:\pi_1(M)\to \mathbb{Z}$ is the abelianization, then $\rho(M,\phi)$ is denoted $\boldsymbol{\rho_0(K)}$ and is equal to the integral over the circle of the Levine-Tristram signature function of $K$ ~\cite[Prop. 5.1]{COT2}. Thus $\rho_0(K)$ is the average of the classical signatures of $K$.
\item [5.] The von Neumann signature satisfies Novikov  additivity, i.e. if $W_1$ and $W_2$ intersect along a common boundary component then $\sigma^{(2)}_\G(W_1\cup W_2)=\sigma^{(2)}_\G(W_1)+\sigma^{(2)}_\G(W_2)$ ~\cite[Lemma 5.9]{COT}.
    \end{itemize}
\end{proposition}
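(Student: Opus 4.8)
The plan is to verify the five properties in turn. Most are formal consequences of the Witt-theoretic packaging of the von Neumann signature recalled above, together with standard facts about $\ell^{(2)}$-invariants; only property~4 carries genuine analytic content, and it is there that I expect the real work to lie.

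\emph{Properties 1 and 5.} For property~1, the hypothesis $(M,\phi)=\partial(W,\psi)$ guarantees that the equivariant intersection form $h_{W,\G}$ descends to a nonsingular Hermitian form on the quotient module
$$
\overline{H}_2\equiv H_2(W;\mathcal{U}\G)\big/\operatorname{Image}\bigl(H_2(\partial W;\mathcal{U}\G)\bigr),
$$
and by construction $\sigma^{(2)}_\G(W,\psi)$ equals the von Neumann signature of this induced form. First I would invoke the fact recalled above (Corollary~5.7 of~\cite{COT}) that $\sigma^{(2)}_\G$ descends to the Witt group of nonsingular Hermitian forms over $\mathcal{U}\G$. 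The existence of a free summand of half rank on which the form vanishes exhibits $\overline{H}_2$ as a metabolic form, hence trivial in the Witt group; therefore its signature, and with it $\sigma^{(2)}_\G(W,\psi)$, vanishes. For the poly-(torsion-free-abelian) case one repeats the argument with $\mathcal{K}\G$-coefficients, using that $\mathcal{U}\G$ is flat over the division ring $\mathcal{K}\G$, so that $H_2(W;\mathcal{U}\G)\cong H_2(W;\mathcal{K}\G)\otimes_{\mathcal{K}\G}\mathcal{U}\G$ and the two signatures are identified. Property~5 (Novikov additivity) follows from the additivity of the equivariant intersection form under gluing along a boundary component, exactly as over $\C$, combined with additivity of the von Neumann trace; this is Lemma~5.9 of~\cite{COT}, to which I would appeal directly.

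\emph{Properties 2 and 3.} For property~2, write $\phi=\iota\circ\phi'$ with $\iota:\G'\hookrightarrow\G$. The $\G$-cover of $M$ associated to $\phi$ is the balanced product of the $\G'$-cover associated to $\phi'$ with $\G$, so as a $\Z[\pi_1(M)]$-module $\ell^{(2)}(\G)$ is induced up from $\ell^{(2)}(\G')$; since the von Neumann trace, and hence the von Neumann $\eta$-invariant, is preserved under this induction, one gets $\rho(M,\phi)=\rho(M,\phi')$. Property~3 is immediate from the $\eta$-difference (or signature-defect) definition: if $\phi$ is the zero map then $\ell_r\circ\phi$ sends every loop to the identity operator, the associated flat $\ell^{(2)}(\G)$-bundle is trivial of von Neumann dimension $1$, so the twisted $\eta$-invariant agrees with the untwisted one and the difference $\rho(M,\phi)$ vanishes. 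Equivalently, over any null-bordism the twisted and untwisted intersection forms coincide.

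\emph{Property 4.} This is the only step with real content and is the main obstacle. Here $\G=\Z$, and the point is that the group von Neumann algebra $\mathcal{N}\Z$ is isomorphic, via the Fourier transform, to $L^\infty(S^1)$, under which the von Neumann trace becomes integration against normalized Haar measure on the dual circle. Choosing a compact oriented $4$-manifold $W$ with $\partial W=M_K$ (for instance one built from a Seifert surface for $K$, with $\sigma(W)=0$), the $\mathcal{N}\Z$-twisted intersection form decomposes pointwise over $\omega\in S^1$ into the finite-dimensional Hermitian forms $(1-\omega)V+(1-\overline{\omega})V^{T}$, whose signatures are by definition the Levine--Tristram $\omega$-signatures of $K$. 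Integrating the pointwise signature over $S^1$ and subtracting $\sigma(W)$ then identifies $\rho(M_K,\phi)$ with the average of the classical signatures. I would carry this out by citing Proposition~5.1 of~\cite{COT2}, where exactly this spectral decomposition is performed; the delicate point there, which is where I expect the difficulty to concentrate, is the measurability and integrability of the family of signature functions and the interchange of the trace with the spectral projections $p_+-p_-$, handled through the $L^\infty(S^1)$ functional calculus.
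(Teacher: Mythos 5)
Your proposal is correct and follows essentially the same route as the paper: the paper states this proposition in its appendix as a compilation of known facts, with no written proof beyond the citations you also invoke (the Witt-group property from Corollary~5.7 of~\cite{COT} for property~1, induction/triviality of coefficients for properties~2 and~3, \cite[Prop.~5.1]{COT2} for property~4, and \cite[Lemma~5.9]{COT} for property~5). Your fleshed-out justifications — in particular noting that nondegeneracy of the induced form on $H_2(W;\mathcal{U}\Gamma)/\mathrm{Image}(H_2(\partial W;\mathcal{U}\Gamma))$ is what makes the half-rank isotropic summand force a vanishing signature — are the standard arguments behind those citations and are consistent with the paper.
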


\section{Funding}
This work was supported by the National Science Foundation [DMS-0406050 and DMS-0706929 to T.C., DMS-0539044 and CAREER-DMS-0748458 to S.H., and DMS-0706929 and Mathematical Sciences Postdoctoral Research Fellowship DMS-0902786 to P.H.]; and the Alfred P. Sloan Foundation [to S.H.].

\bibliographystyle{plain}
\bibliography{mybib7mathscinetrevised}

\begin{thebibliography}{10}

\bibitem{A76}
M.~F. Atiyah.
\newblock Elliptic operators, discrete groups and von {N}eumann algebras.
\newblock In {\em Colloque ``{A}nalyse et {T}opologie'' en l'{H}onneur de
  {H}enri {C}artan ({O}rsay, 1974)}, pages 43--72. Ast\'erisque, No. 32--33.
  Soc. Math. France, Paris, 1976.

\bibitem{APS2}
M.~F. Atiyah, V.~K. Patodi, and I.~M. Singer.
\newblock Spectral asymmetry and {R}iemannian geometry. {II}.
\newblock {\em Math. Proc. Cambridge Philos. Soc.}, 78(3):405--432, 1975.

\bibitem{A87}
Michael Atiyah.
\newblock The logarithm of the {D}edekind {$\eta$}-function.
\newblock {\em Math. Ann.}, 278(1-4):335--380, 1987.

\bibitem{Ba91}
Christophe Bavard.
\newblock Longueur stable des commutateurs.
\newblock {\em Enseign. Math. (2)}, 37(1-2):109--150, 1991.

\bibitem{BeFu}
Mladen Bestvina and Koji Fujiwara.
\newblock Bounded cohomology of subgroups of mapping class groups.
\newblock {\em Geom. Topol.}, 6:69--89, 2002.

\bibitem{BeFu2}
Mladen Bestvina and Koji Fujiwara.
\newblock Quasi-homomorphisms on mapping class groups.
\newblock {\em Glas. Mat. Ser. III}, 42(62)(1):213--236, 2007.

\bibitem{Bohn}
Michael Bohn.
\newblock On rho invariants of fiber bundles.
\newblock preprint 2009, http://front.math.ucdavis.edu/0907.3530.

\bibitem{Boro1}
Maciej Borodzik.
\newblock A $\rho$-invariant of iterated torus knots.
\newblock preprint avilable at http://front.math.ucdavis.edu/0906.3660.

\bibitem{BR}
Kenneth~S. Brown.
\newblock {\em Cohomology of groups}, volume~87 of {\em Graduate Texts in
  Mathematics}.
\newblock Springer-Verlag, New York, 1982.

\bibitem{Cal2}
Danny Calegari.
\newblock {\em scl}, volume~20 of {\em MSJ Memoirs}.
\newblock Mathematical Society of Japan, Tokyo, 2009.

\bibitem{ChL}
Jae~Choon Cha and Charles Livingston.
\newblock Knot signature functions are independent.
\newblock {\em Proc. Amer. Math. Soc.}, 132(9):2809--2816 (electronic), 2004.

\bibitem{ChGr1}
Jeff Cheeger and Mikhael Gromov.
\newblock Bounds on the von {N}eumann dimension of {$L\sp 2$}-cohomology and
  the {G}auss-{B}onnet theorem for open manifolds.
\newblock {\em J. Differential Geom.}, 21(1):1--34, 1985.

\bibitem{ChFa}
Thomas Church and Benson Farb.
\newblock Infinite generation of the kernels of the {M}agnus and {B}urau
  representations.
\newblock {\em Algebr. Geom. Topol.}, 10(2):837--851, 2010.

\bibitem{CiFl}
David Cimasoni and Vincent Florens.
\newblock Generalized {S}eifert surfaces and signatures of colored links.
\newblock {\em Trans. Amer. Math. Soc.}, 360(3):1223--1264 (electronic), 2008.

\bibitem{C}
Tim~D. Cochran.
\newblock Noncommutative knot theory.
\newblock {\em Algebr. Geom. Topol.}, 4:347--398, 2004.

\bibitem{CH2}
Tim~D. Cochran and Shelly Harvey.
\newblock Homology and derived series of groups. {II}. {D}wyer's theorem.
\newblock {\em Geom. Topol.}, 12(1):199--232, 2008.

\bibitem{CHL3}
Tim~D. Cochran, Shelly Harvey, and Constance Leidy.
\newblock Knot concordance and higher-order {B}lanchfield duality.
\newblock {\em Geom. Topol.}, 13:1419--1482, 2009.

\bibitem{COT}
Tim~D. Cochran, Kent~E. Orr, and Peter Teichner.
\newblock Knot concordance, {W}hitney towers and {$L\sp 2$}-signatures.
\newblock {\em Ann. of Math. (2)}, 157(2):433--519, 2003.

\bibitem{COT2}
Tim~D. Cochran, Kent~E. Orr, and Peter Teichner.
\newblock Structure in the classical knot concordance group.
\newblock {\em Comment. Math. Helv.}, 79(1):105--123, 2004.

\bibitem{CT}
Tim~D. Cochran and Peter Teichner.
\newblock Knot concordance and von {N}eumann {$\rho$}-invariants.
\newblock {\em Duke Math. J.}, 137(2):337--379, 2007.

\bibitem{Collins}
Julia Collins.
\newblock The $l^{(2)}$-signature of torus knots.
\newblock preprint avilable at http://front.math.ucdavis.edu/1001.1329.

\bibitem{DaKi}
James~F. Davis and Paul Kirk.
\newblock {\em Lecture notes in algebraic topology}, volume~35 of {\em Graduate
  Studies in Mathematics}.
\newblock American Mathematical Society, Providence, RI, 2001.

\bibitem{Endo2}
Hisaaki Endo.
\newblock Meyer's signature cocycle and hyperelliptic fibrations.
\newblock {\em Math. Ann.}, 316(2):237--257, 2000.

\bibitem{Endo1}
Hisaaki Endo and Seiji Nagami.
\newblock Signature of relations in mapping class groups and non-holomorphic
  {L}efschetz fibrations.
\newblock {\em Trans. Amer. Math. Soc.}, 357(8):3179--3199 (electronic), 2005.

\bibitem{Fuller}
Terry Fuller.
\newblock Lefschetz fibrations of 4-dimensional manifolds.
\newblock {\em Cubo Mat. Educ.}, 5(3):275--294, 2003.

\bibitem{GaGh2}
Jean-Marc Gambaudo and {\'E}tienne Ghys.
\newblock Commutators and diffeomorphisms of surfaces.
\newblock {\em Ergodic Theory Dynam. Systems}, 24(5):1591--1617, 2004.

\bibitem{GaGh}
Jean-Marc Gambaudo and {\'E}tienne Ghys.
\newblock Braids and signatures.
\newblock {\em Bull. Soc. Math. France}, 133(4):541--579, 2005.

\bibitem{GarL1}
Stavros Garoufalidis and Jerome Levine.
\newblock Tree-level invariants of three-manifolds, {M}assey products and the
  {J}ohnson homomorphism.
\newblock In {\em Graphs and patterns in mathematics and theoretical physics},
  volume~73 of {\em Proc. Sympos. Pure Math.}, pages 173--203. Amer. Math.
  Soc., Providence, RI, 2005.

\bibitem{Ghys}
{\'E}tienne Ghys.
\newblock Knots and dynamics.
\newblock In {\em International {C}ongress of {M}athematicians. {V}ol. {I}},
  pages 247--277. Eur. Math. Soc., Z\"urich, 2007.

\bibitem{GomStip}
Robert~E. Gompf and Andr{\'a}s~I. Stipsicz.
\newblock {\em {$4$}-manifolds and {K}irby calculus}, volume~20 of {\em
  Graduate Studies in Mathematics}.
\newblock American Mathematical Society, Providence, RI, 1999.

\bibitem{Ha2}
Shelly~L. Harvey.
\newblock Homology cobordism invariants and the {C}ochran-{O}rr-{T}eichner
  filtration of the link concordance group.
\newblock {\em Geom. Topol.}, 12(1):387--430, 2008.

\bibitem{Heap}
Aaron Heap.
\newblock Bordism invariants of the mapping class group.
\newblock {\em Topology}, 45(5):851--886, 2006.

\bibitem{Hor2}
Peter~D. Horn.
\newblock The non-trivilaity of the {G}rope filtrations of the knot and link
  concordance groups.
\newblock {\em Commentarii Math. Helv.}, 85(4):751--773, 2010.

\bibitem{IgOrr}
Kiyoshi Igusa and Kent~E. Orr.
\newblock Links, pictures and the homology of nilpotent groups.
\newblock {\em Topology}, 40(6):1125--1166, 2001.

\bibitem{DJ1}
Dennis Johnson.
\newblock The structure of the {T}orelli group. {II}. {A} characterization of
  the group generated by twists on bounding curves.
\newblock {\em Topology}, 24(2):113--126, 1985.

\bibitem{KM2}
Robion Kirby and Paul Melvin.
\newblock Dedekind sums, {$\mu$}-invariants and the signature cocycle.
\newblock {\em Math. Ann.}, 299(2):231--267, 1994.

\bibitem{Kot04}
D.~Kotschick.
\newblock Quasi-homomorphisms and stable lengths in mapping class groups.
\newblock {\em Proc. Amer. Math. Soc.}, 132(11):3167--3175, 2004.

\bibitem{Kuno2}
Yusuke Kuno.
\newblock The mapping class group and the {M}eyer function for plane curves.
\newblock {\em Math. Ann.}, 342(4):923--949, 2008.

\bibitem{Le10}
J.~P. Levine.
\newblock Signature invariants of homology bordism with applications to links.
\newblock In {\em Knots 90 ({O}saka, 1990)}, pages 395--406. de Gruyter,
  Berlin, 1992.

\bibitem{L6}
J.~P. Levine.
\newblock Link invariants via the eta invariant.
\newblock {\em Comment. Math. Helv.}, 69(1):82--119, 1994.

\bibitem{Le8}
Jerome Levine.
\newblock Homology cylinders: an enlargement of the mapping class group.
\newblock {\em Algebr. Geom. Topol.}, 1:243--270, 2001.

\bibitem{Le9}
Jerome Levine.
\newblock Addendum and correction to: ``{H}omology cylinders: an enlargement of
  the mapping class group'' [{A}lgebr. {G}eom. {T}opol. {\bf 1} (2001),
  243--270; {MR}1823501 (2002m:57020)].
\newblock {\em Algebr. Geom. Topol.}, 2:1197--1204, 2002.

\bibitem{Lick1}
W.~B.~R. Lickorish.
\newblock A representation of orientable combinatorial {$3$}-manifolds.
\newblock {\em Ann. of Math. (2)}, 76:531--540, 1962.

\bibitem{Luc}
Wolfgang L{\"u}ck.
\newblock {\em {$L\sp 2$}-invariants: theory and applications to geometry and
  {$K$}-theory}, volume~44 of {\em Ergebnisse der Mathematik und ihrer
  Grenzgebiete. 3. Folge. A Series of Modern Surveys in Mathematics [Results in
  Mathematics and Related Areas. 3rd Series. A Series of Modern Surveys in
  Mathematics]}.
\newblock Springer-Verlag, Berlin, 2002.

\bibitem{LS}
Wolfgang L{\"u}ck and Thomas Schick.
\newblock Various {$L\sp 2$}-signatures and a topological {$L\sp 2$}-signature
  theorem.
\newblock In {\em High-dimensional manifold topology}, pages 362--399. World
  Sci. Publ., River Edge, NJ, 2003.

\bibitem{May}
Werner Meyer.
\newblock Die {S}ignatur von {F}l\"achenb\"undeln.
\newblock {\em Math. Ann.}, 201:239--264, 1973.

\bibitem{Mi4}
John Milnor.
\newblock A duality theorem for {R}eidemeister torsion.
\newblock {\em Ann. of Math. (2)}, 76:137--147, 1962.

\bibitem{Mori2}
Takayuki Morifuji.
\newblock On {M}eyer's function of hyperelliptic mapping class groups.
\newblock {\em J. Math. Soc. Japan}, 55(1):117--129, 2003.

\bibitem{Mori3}
Takayuki Morifuji.
\newblock On a secondary invariant of the hyperelliptic mapping class group.
\newblock In {\em Algebraic topology---old and new}, volume~85 of {\em Banach
  Center Publ.}, pages 83--92. Polish Acad. Sci. Inst. Math., Warsaw, 2009.

\bibitem{Mor4}
Shigeyuki Morita.
\newblock Abelian quotients of subgroups of the mapping class group of
  surfaces.
\newblock {\em Duke Math. J.}, 70(3):699--726, 1993.

\bibitem{Mor2}
Shigeyuki Morita.
\newblock Casson invariant, signature defect of framed manifolds and the
  secondary characteristic classes of surface bundles.
\newblock {\em J. Differential Geom.}, 47(3):560--599, 1997.

\bibitem{Mor5}
Shigeyuki Morita.
\newblock Cohomological structure of the mapping class group and beyond.
\newblock In {\em Problems on mapping class groups and related topics},
  volume~74 of {\em Proc. Sympos. Pure Math.}, pages 329--354. Amer. Math.
  Soc., Providence, RI, 2006.

\bibitem{Natov}
Jonathan Natov.
\newblock On signatures and a subgroup of a central extension to the mapping
  class group.
\newblock {\em Homology Homotopy Appl.}, 5(1):251--260 (electronic), 2003.

\bibitem{PS}
Stefan Papadima and Alexander~I. Suciu.
\newblock Chen {L}ie algebras.
\newblock {\em Int. Math. Res. Not.}, (21):1057--1086, 2004.

\bibitem{Put2}
Andrew Putman.
\newblock Abelian covers of surfaces and the homology of the level {L} mapping
  class group.
\newblock preprint avilable at http://front.math.ucdavis.edu/0907.1718.

\bibitem{Put1}
Andrew Putman.
\newblock The abelianization of the level {L} mapping class group.
\newblock preprint http://front.math.ucdavis.edu/0803.0539.

\bibitem{Ra}
Mohan Ramachandran.
\newblock von {N}eumann index theorems for manifolds with boundary.
\newblock {\em J. Differential Geom.}, 38(2):315--349, 1993.

\bibitem{Rei1}
Holger Reich.
\newblock On the {$K$}- and {$L$}-theory of the algebra of operators affiliated
  to a finite von {N}eumann algebra.
\newblock {\em $K$-Theory}, 24(4):303--326, 2001.

\bibitem{R}
Dale Rolfsen.
\newblock {\em Knots and links}, volume~7 of {\em Mathematics Lecture Series}.
\newblock Publish or Perish Inc., Houston, TX, 1990.
\newblock Corrected reprint of the 1976 original.

\bibitem{Sak1}
Takuya Sakasai.
\newblock The {M}agnus representation and higher-order {A}lexander invariants
  for homology cobordisms of surfaces.
\newblock {\em Algebr. Geom. Topol.}, 8(2):803--848, 2008.

\bibitem{Sm}
Lawrence Smolinsky.
\newblock Invariants of link cobordism.
\newblock In {\em Proceedings of the 1987 Georgia Topology Conference (Athens,
  GA, 1987)}, volume~32, pages 161--168, 1989.

\bibitem{St}
John Stallings.
\newblock Homology and central series of groups.
\newblock {\em J. Algebra}, 2:170--181, 1965.

\end{thebibliography}
\end{document}